\documentclass[11pt]{article}

\usepackage{amsmath,amssymb,amscd,amsthm}
\usepackage{ascmac}
\usepackage[all]{xy}
\usepackage{enumerate}
\usepackage{cancel}
\usepackage[dvips]{graphicx}
\usepackage{authblk}
\usepackage{comment}
\usepackage{bm}
\usepackage{mathrsfs}
\usepackage{array,arydshln}
\usepackage{geometry}
\usepackage{url}
\usepackage{ulem}
\theoremstyle{plain}
\newtheorem{thm}{Theorem}[section]
\newtheorem{pro}[thm]{Proposition}
\newtheorem{`thm'}[thm]{``Theorem''}
\newtheorem{cor}[thm]{Corollary}
\newtheorem{lem}[thm]{Lemma}

\newtheorem{dfn-thm}[thm]{Definition-Theorem}
\newtheorem{dfn-pro}[thm]{Definition-Proposition}
\newtheorem*{mainthm}{Main-Theorem}
\theoremstyle{definition}
\newtheorem{prob}[thm]{Problem}
\newtheorem{dfn}[thm]{Definition}

\newtheorem{asm}[thm]{Assumption}

\theoremstyle{remark}
\newtheorem{rmk}[thm]{Remark}
\newtheorem{rmks}[thm]{Remarks}

\newtheorem{exs}[thm]{Examples}

\newcommand{\bb}[1]{\mathbb{#1}}
\newcommand{\fra}[1]{\mathfrak{#1}}
\newcommand{\ca}[1]{\mathcal{#1}}
\newcommand{\wt}[1]{\widetilde{#1}}

\newcommand{\CL}[1]{\mathit{Cliff}(\bb{R}^{#1})}
\newcommand{\Cl}{\mathit{Cliff}}

\newcommand{\fib}{{\rm fib}}

\newcommand{\id}{{\rm id}}

\newcommand{\End}{{\rm End}}

\newcommand{\midd}{\,\middle|\,}

\newcommand{\rank}{{\rm rank}}

\newcommand{\red}{{\rm red}}

\newcommand{\Ad}{{\rm Ad}}

\newcommand{\bra}[1]{\left(#1\right)}
\newcommand{\bbra}[1]{\left\{#1\right\}}
\newcommand{\bbbra}[1]{\left[#1\right]}

\newcommand{\vect}[1]{\overrightarrow{#1}}

\newcommand{\ev}{{\rm ev}}
\newcommand{\odd}{{\rm odd}}
\newcommand{\od}{{\rm od}}

\newcommand{\vep}{\varepsilon}
\newcommand{\grotimes}{\widehat{\otimes}}
\newcommand{\groplus}{\widehat{\oplus}}

\begin{document}
\title{Geometric $K$-homology and operator $K$-theory for Hilbert manifolds}
\author{Doman Takata \\
Niigata University}

\date{\today}

\maketitle
\begin{abstract}
Poincar\'e duality is a classical theorem relating the homology and cohomology of closed oriented manifolds.
This theorem has been extended to more general settings and to generalized (co)homology theories, including $K$-theory.
In this paper, we construct an infinite-dimensional analogue of the $K$-theoretic Poincar\'e duality homomorphism.
More precisely, for an infinite-dimensional Hilbert manifold $\mathcal{M}$, we construct a homomorphism 
$$K^{geo}_*(\mathcal{M})\to K_*(\mathcal{A}(\mathcal{M})),$$
where $K^{geo}_*(\mathcal{M})$ denotes the geometric $K$-homology of Baum and Douglas, and $\mathcal{A}(\mathcal{M})$ is a $C^*$-algebra associated to $\mathcal{M}$, based on a construction of Gong, Wu, and Yu.
We also prove that the constructed homomorphism is non-trivial in certain cases.
\end{abstract}

\setcounter{tocdepth}{2}
\tableofcontents

\section{Introduction}

Poincar\'e duality is a fundamental theorem relating homology and cohomology. It has numerous applications, including intersection theory, and has been generalized in various directions \cite{Hat,Swi}.

In this paper, we construct an infinite-dimensional analogue of the $K$-theoretic Poincar\'e duality map.

\begin{mainthm}
Let $\ca{M}$ be a Hilbert manifold satisfying Assumption \ref{Katei},
let $\ca{A(M)}$ be the $C^*$-algebra associated to $\ca{M}$ introduced in Definition \ref{def A(M)},
and let $K_*^{geo}(\ca{M})$ denote the Baum--Douglas geometric $K$-homology of $\ca{M}$ (see Definition \ref{Def of K-homology}).
Then there exists a homomorphism
$$
\Phi:K_*^{geo}(\ca{M})\to K_*(\ca{A(M)}).
$$
\end{mainthm}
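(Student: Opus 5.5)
The plan is to define $\Phi$ on Baum--Douglas cycles $(M,E,f)$ and then show that it respects the three relations generating $K^{geo}_*(\ca{M})$. Here $M$ is a closed spin$^c$ manifold, $E\to M$ is a Hermitian vector bundle, and $f:M\to\ca{M}$ is continuous, which we may take to be smooth after a homotopy.

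\emph{Construction.} Given a cycle, I would first use Assumption \ref{Katei} to perturb $f$ to an embedding of $M$ into a finite-dimensional piece of $\ca{M}$. The normal directions split into a finite-dimensional part $\nu$ and an infinite-dimensional complement.
\begin{itemize}
\item On the finite-dimensional part, the spin$^c$ structure of $M$ and the normal bundle give a Bott/Thom element. Concretely, the twisted Dirac class $[D_E]\in K_*(C(M))$ is pushed by the Thom isomorphism (the wrong-way map $f_!$) to the $K$-theory of $C_0$ of a tubular neighbourhood, with Clifford coefficients as in the Gong--Wu--Yu construction.
\item In the infinite-dimensional normal directions, I would tensor with the Higson--Kasparov--Trout Bott element. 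This element is the class of the Clifford-multiplication/harmonic-oscillator map $\beta$ from $\ca{S}$ into $\ca{S}\grotimes \Cl$ of the normal Hilbert space.
\item Finally, push the result forward along the inclusion of the tubular neighbourhood into $\ca{A(M)}$, using the inductive-limit description of $\ca{A(M)}$ in Definition \ref{def A(M)}.
\end{itemize}
The degree works out because the Clifford twisting in $\ca{A(M)}$ absorbs the dimension shifts.

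\emph{Well-definedness.} The class must be independent of the choices: the perturbation of $f$, the finite-dimensional approximating subspace, and the tubular neighbourhood. Homotopies of embeddings give homotopies of $*$-homomorphisms, and enlarging the finite-dimensional subspace is compatible because the Bott elements are compatible with the connecting maps of the inductive system. This compatibility is the key property of the Higson--Kasparov--Trout construction.

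\emph{Relations.}
\begin{itemize}
\item \emph{Direct sum/disjoint union.} This is immediate, since $[D_{E_1\oplus E_2}]=[D_{E_1}]+[D_{E_2}]$.
\item \emph{Bordism.} If $(W,F,g)$ bounds $(M,E,f)$, then after embedding $W$ in $\ca{M}$ the Dirac class of the boundary is the image of a class under the boundary map in the six-term sequence for $C_0(W^\circ)\to C(W)\to C(M)$. Its image under the Thom map therefore vanishes once composed into $\ca{A(M)}$.
\item \emph{Vector bundle modification.} Here $\hat M$ is the sphere bundle of $H\oplus\underline{\bb{R}}$ and $\hat E$ is built from the Bott bundle. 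The Dirac class of $\hat E$ on $\hat M$ pushes forward to $[D_E]$ by the standard Bott periodicity computation, so after re-embedding $\hat M$ near $M$ the two images agree.
\end{itemize}

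I expect the main obstacle to be the compatibility step: showing that the finite-dimensional Thom map and the infinite-dimensional Bott element are coherent with the inductive-limit structure of $\ca{A(M)}$, including the transition maps between charts of $\ca{M}$. This is needed so that $\Phi$ does not depend on the approximating subspace. I would first try to reduce it to the Gong--Wu--Yu result that the Bott map into $\ca{A}$ of a Hilbert space induces compatible $K$-theory maps, applied fibrewise to the tubular neighbourhoods.
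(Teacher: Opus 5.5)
The gap is in the construction itself. You build the class by embedding $M$ in a ``finite-dimensional piece'' of $\ca{M}$. You then take a Thom class in the finite-dimensional normal directions and an HKT Bott element in the infinite-dimensional ones, and push into $\ca{A(M)}$ ``using the inductive-limit description of $\ca{A(M)}$ in Definition \ref{def A(M)}''. Definition \ref{def A(M)} gives no such description: $\ca{A(M)}$ is the $C^*$-subalgebra of $\Pi_b(\ca{M})$ generated by the local Bott images $\beta_{x_0}(g)$, $x_0\in\ca{M}$, $g\in\ca{S}_\vep$. A Hilbert manifold satisfying Assumption \ref{Katei} need not admit an exhaustion by finite-dimensional (totally geodesic) submanifolds with compatible connecting maps; that is an extra hypothesis even in \cite{GWY}. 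Assumption \ref{Katei}, a curvature upper bound and an injectivity-radius lower bound, does not supply one.

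Consequently nothing shows that a Thom/Bott element built from the normal geometry of an embedded, non-totally-geodesic $f(M)$ lies in $\ca{A(M)}$, or defines a $KK$-class with target $\ca{A(M)}$. Even for the Gysin map along a submanifold, the paper notes that landing in $\ca{A}$ is highly non-trivial. It is proved only in the totally geodesic case, where generators go to generators. So the step you flag as the ``main obstacle'' is the whole construction, not a compatibility check. Your proposed reduction, applying the Gong--Wu--Yu Bott map fibrewise to tubular neighbourhoods, does not fit, because $\ca{A(M)}$ is not assembled from charts or tubular neighbourhoods. (A smaller point: the Thom isomorphism acts on the $K$-theory class $[E]$, made into a class over $Cl_\tau(M)$ via $S_M$, not on the $K$-homology class $[D_E]$.)

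The missing idea is to avoid the normal bundle of $f(M)$ in $\ca{M}$ altogether by a graph construction. Take $i^f:M\to\ca{M}\times M$, $x\mapsto(f(x),x)$, and define $(i^f)_!:\ca{A}(M)=\ca{S}_\vep\grotimes Cl_\tau(M)\to\ca{A(M)}\grotimes Cl_\tau(M)$ by $(i^f)_!(g\grotimes\phi)(t,m,x)=\beta_{f(x)}(g)(t,m)\grotimes\phi(x)$. The ``Thom class'' in the $\ca{M}$-directions is then just the continuous family of the algebra's own generators $\beta_{f(x)}$. That it lands in $\ca{A(M)}\grotimes Cl_\tau(M)$ follows from a partition of unity on $M$ and the uniform continuity of $x_0\mapsto\beta_{x_0}(g)$. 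This is where Assumption \ref{Katei} enters, via Rauch comparison: $\|\vect{z_0m}-\vect{zm}\|\le 2d(z_0,z)$.

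One then sets $\Phi(M,E,f)=([b_+]\grotimes[E\grotimes S_M])\grotimes[(i^f)_!]\grotimes[d_M]$, where the Dirac element $[d_M]$ integrates out the compact $M$-factor. This needs only continuous $f$: no smoothing, no embedding, no finite-dimensional approximation. So there are no choices to check, and homotopy invariance of $[(i^f)_!]$ is immediate.

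The relations are then $KK$-diagram chases:
\begin{itemize}
\item Bordism uses a collar, a connecting map built from $[\tau_{\mathring{I}}]$, the same graph construction $(i^{\tilde f})_!$ on the interior $\mathring{L}$, and the identity $[\tau_{\bb{R}}]\grotimes[S_{\bb{R}}]\grotimes[d_{\bb{R}}]=[\Delta^2]$.
\item Bundle modification uses the families-index results of \cite{BHS}.
\end{itemize}
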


This theorem connects the topology of $\ca{M}$ to the noncommutative geometry of $\ca{A(M)}$.
This homomorphism provides a new approach to study $\ca{A(M)}$ through the topology of $\ca{M}$.
For instance, whenever $\Phi$ is nontrivial, one obtains nontrivial elements in the $K$-theory of $\ca{A(M)}$ from topological data of $\ca{M}$.
Indeed, we prove that $\Phi$ is nontrivial for certain Hilbert manifolds.

We next review the background and previous work related to our construction.

\subsection*{Homological Poincar\'e duality}

We first review a description of Poincar\'e duality entirely in terms of Gysin maps. This viewpoint will be essential in the proof of the main theorem.
For a treatment of Poincar\'e duality from the 
viewpoint of algebraic topology, see \cite{Hat,Swi}.

The simplest form of Poincar\'e duality is an isomorphism
$PD_X:H_*(X)\to H^{\dim (X)-*}(X)$ for a closed oriented manifold $X$.
Using Poincar\'e duality, we can define the Gysin map (also called the wrong-way map) on cohomology as follows. For a continuous map $f:M\to X$ of closed oriented manifolds, $f_!:=PD_X\circ f_*\circ PD_M^{-1}$, where $PD_M:H_*(M)\to H^{\dim (M)-*}(M)$ is the Poincar\'e duality map.
The Gysin map encodes intersection-theoretic information.
In fact, if $\iota_1:M_1\hookrightarrow X$ and $\iota_2:M_2\hookrightarrow X$ intersect transversally, and if $\dim(M_1)+\dim(M_2)=\dim (X)$, we have $\int_X(\iota_1)_!(1)\wedge (\iota_2)_!(1)=\#(M_1\cap M_2)$.
See \cite{Nic} for details.

We can also define the Gysin map without Poincar\'e duality.
A smooth map $f:M\to X$ of closed oriented manifolds admits the following factorization: the zero section of a vector bundle $s:M\to \nu$, where $\nu$ is the total space of the normal bundle of $(f,\id):M\hookrightarrow X\times M$, the open embedding $\iota:\nu \hookrightarrow M\times X$, and the projection $\varpi:X\times M\to X$.
Then we define $s_!:H^*(M)\to H_c^{*+\dim (X)}(\nu)$ by the Thom isomorphism, $\iota_!:H_c^{*+\dim (X)}(\nu)\to H^{*+\dim (X)}(X\times M)$ by extension by zero, and $\varpi_!:H^{*+\dim (X)}(X\times M)\to H^{*+\dim (X)-\dim(M)}(X)$ by the fiber integration.
Then, we define $f_!:=\varpi_!\circ \iota_!\circ s_!$.
Moreover, if $f$ is an embedding, $f_!(1)$ is nothing but the Poincar\'e dual of the submanifold $M$ as explained in \cite{BT}.

Combining this construction with Jakob's bordism-type description of homology theories \cite{Jak}, we can describe Poincar\'e duality.
According to \cite{Jak}, if $X$ has the homotopy type of a finite CW-complex, every element of $H_*(X)$ can be represented by a triple consisting of an oriented manifold $M$, a continuous map $f:M\to X$ and an element $u\in H^*(M)$.
$H_*(X)$ is isomorphic to the group of equivalence classes of such triples.
To such a triple, one can assign an element $f_!(u)\in H^{*+\dim(X)-\dim(M)}(X)$.
One can prove that $f_!(u)$ is independent of the choice of representatives.

It follows directly from the definitions that this construction coincides with the classical Poincar\'e duality.
In fact, the homology element corresponding to the triple $(M,u,f)$ is $f_*(PD_M^{-1}(u))$.
Its Poincar\'e dual is $PD_X(f_*(PD_M^{-1}(u)))=f_!(u)$.
Consequently, Poincar\'e duality admits a description entirely in terms of Gysin maps.

This description also extends naturally to non-orientable manifolds.
If $X$ is a closed manifold, not necessarily orientable, let $o_X$ denote its orientation sheaf.
Let $(M,u,f)$ be a triple representing an element of $H_*(X)$.
The normal bundle $\nu$ for $M\hookrightarrow X\times M$ can be non-orientable.
The Thom isomorphism remains valid for non-orientable vector bundles with coefficients in the orientation sheaf $o^{fib}_\nu$ of the vector bundle $\nu$, namely $H^*(M)\cong H_c^{*+\rank(\nu)}(\nu;o^{fib}_\nu)$.
Since $M$ is oriented, we have a specified isomorphism $H^*(M)\cong H^*(M;o_M)$.
Let $o_\nu$ be the orientation sheaf of the total space of $\nu$.
Using the canonical isomorphisms $o_M\otimes o^{fib}_{\nu}\cong o_\nu$ and $o_X|_{\nu}\cong o_\nu$, the Poincar\'e duality map takes the form $H_*(X)\to H^{\dim(X)-*}(X;o_X)$.

We can also consider non-compact manifolds.
In this case, the Poincaré duality map takes the form $H_*(X)\to H^{\dim(X)-*}_c(X)$.
It reflects the fact that the Thom isomorphism and the extension by zero take values in the compactly supported cohomology.
Combining these two generalizations, the Poincar\'e duality map takes the following form for non-compact non-orientable $X$ 
$$PD_X:H_*(X)\to H^{\dim(X)-*}_c(X;o_X).$$

\subsection*{$K$-theory Poincar\'e duality}

Poincar\'e duality is not restricted to singular homology and cohomology;
it also holds for generalized homology and cohomology theories \cite{Swi,Dye}.
In particular, for a $Spin^c$-manifold $X$, the $K$-homology of $X$ is isomorphic to the $K$-cohomology of $X$ by Poincar\'e duality.
One can describe this Poincar\'e dualityin exactly the same manner as above: by using the Gysin map defined directly, and the bordism-type description of the generalized homology.

In fact, the Gysin map in $K$-theory can be defined by the composition of the Thom isomorphism, extension by zero and fiber integration \cite{Fur,ASi1,CW,Kar}.
In \cite{BD}, Baum and Douglas introduced geometric $K$-homology of a topological space $X$, denoted by $K^{geo}_*(X)$, as the group of equivalence classes of triples $(M,E,f)$ consisting of a closed $Spin^c$-manifold $M$, a continuous map $f:M\to X$ and a complex vector bundle $E$ on $M$.
Then, the Poincar\'e dual of $(M,E,f)$ is given by $f_!([E])$.
One can easily check that $f_!([E])$ coincides with the classical Poincar\'e dual of the corresponding $K$-homology element.

The $K$-theoretic counterpart of the orientation sheaf $o_X$ on $X$ is the Clifford algebra bundle $\Cl(TX)$ of $TX$.
Let $Cl_\tau(X)=C_0(X,\Cl(TX))$.
The $K$-theory group $K_0(Cl_\tau(X))$ is the $K$-theoretic analogue of $H^{\dim(X)}(X;o_X)$.
In fact, if $X$ is parallelizable, $Cl_\tau(X)\cong C_0(X,\Cl(\bb{R}^{\dim(X)}))$ and we have
$K_0(Cl_\tau(X))\cong K_0(C_0(X)\grotimes \Cl(\bb{R}^{\dim(X)}))
\cong K_{\dim(X)}(C_0(X))$.
The appearance of the Clifford algebra absorbs the degree shift in the classical formulation of Poincar\'e duality.
Then, the Poincar\'e duality map takes the following form:
$$PD_X:K_*^{geo}(X)\to K_{*}(Cl_\tau(X)).$$
Therefore, the Poincar\'e duality map may be viewed as a bridge between covariant topological information of $X$ 
and covariant noncommutative geometric information of $Cl_\tau(X)$.

Besides the Poincar\'e duality described above, there is another formulation of $K$-theoretic Poincar\'e duality due to Kasparov \cite{Kas15}.
This duality takes the form
$KK(C_0(X),\bb{C})\to \ca{R}KK(X;C_0(X),Cl_\tau(X))$.
The Poincar\'e duality described above corresponds to the duality between compactly supported cohomology and homology, whereas Kasparov's formulation is analogous to the duality between ordinary cohomology and Borel--Moore homology.
Kasparov's Poincar\'e duality was generalized to an infinite-dimensional setting in \cite{T2,T3}.

\subsection*{Infinite-dimensional manifolds and noncommutative geometry}

The goal of this paper is to develop an infinite-dimensional version of the above $K$-theoretic Poincar\'e duality by replacing the finite-dimensional manifold $X$ with an infinite-dimensional Hilbert manifold $\mathcal M$.

For this purpose, we need an infinite-dimensional analogue of $Cl_\tau(X)$.
Since an infinite-dimensional manifold is nowhere locally compact, every continuous function vanishing at infinity is identically zero.
This prevents us from applying the finite-dimensional construction directly.
Moreover, by the Gelfand-Naimark theorem, the category of locally compact Hausdorff spaces and that of commutative $C^*$-algebras are contravariantly equivalent.
Thus, one needs to formulate an alternative notion playing the role of ``vanishing at infinity'', and assign a noncommutative $C^*$-algebra to an infinite-dimensional manifold.

This line of research originated in the work of Higson--Kasparov--Trout \cite{HKT} and Higson--Kasparov \cite{HK}.
They introduced a $C^*$-algebra $\ca{A}(H)$ for a Hilbert space $H$ in order to study an infinite-dimensional version of the Bott periodicity theorem.
The $C^*$-algebra $\ca{A}(H)$ may be regarded as an infinite-dimensional analogue of the graded suspension of $Cl_\tau(\bb{R}^n)$.
In fact, $\ca{A}(H)$ is the inductive limit of the graded suspensions of $Cl_\tau(\bb{R}^n)$.
This is made possible by constructing compatible homomorphisms between the graded suspensions of $Cl_\tau(\bb{R}^n)$ and $Cl_\tau(\bb{R}^{n+k})$.
Consequently, the Bott periodicity map is realized as the map induced by a $*$-homomorphism.

About two decades later, Gong--Wu--Yu gave another description of $\ca{A}(H)$, and generalized it to Hilbert--Hadamard spaces. 
These are (possibly infinite-dimensional) geodesic metric spaces of non-positive curvature in the sense of Alexandrov in \cite{GWY}.
See also \cite{GWXY}.
Subsequently, in a conference talk \cite{Yu}, Yu generalized this construction to Hilbert manifolds whose injectivity radius is bounded below, and the author wrote down the details and investigated the resulting construction in \cite{T2,T3}.
A different approach, based on the original inductive-limit construction of Higson--Kasparov--Trout, was developed in \cite{Tro,DT}.

Computing noncommutative geometric invariants of such $C^*$-algebras is an important problem.
For instance, the $K$-theory of $\ca{A}(H)$ for a Hilbert space $H$ was computed in \cite{HKT}.
If $\ca{M}$ is a Hilbert-Hadamard space admitting a sequence $M_1\subseteq M_2\subseteq \cdots\subseteq \ca{M}$ such that each $M_i$ is finite-dimensional and totally geodesic, the Bott map is shown to be injective in \cite{GWY}. 
Consequently, $K_*(\ca{A(M)})$ is non-trivial.
The Poincar\'e duality homomorphism constructed in this paper is expected to provide a useful tool for studying such invariants.

\subsection*{The outline of the construction}

We now outline the construction of an infinite-dimensional analogue of the $K$-theoretic Poincar\'e duality 
$PD_X:K_*^{geo}(X)\to K_*(Cl_\tau(X))$.
The left-hand side requires no modification, since geometric $K$-homology is defined for arbitrary topological spaces.
For the right-hand side, we use $\ca{A(M)}$ as a substitute for the   function algebra of $\ca{M}$. 
The necessary properties of $\ca{A(M)}$ are reviewed in Section \ref{subsection A(M)}.

The key ingredient in the construction is the Gysin map.
In the classical definition, it is just a homomorphism between $K$-theory groups.
In \cite{CS,EM}, Connes--Skandalis and Emerson--Meyer realized Gysin maps as Kasparov products with suitable $KK$-elements.
We follow this framework.
We will construct a $KK$-element implementing the Gysin map associated to a continuous map from a closed manifold to $\ca{M}$ in Section \ref{subsection PF for Hilb}.

Using these ingredients, we can assign an element of 
$K_*(\mathcal A(\mathcal M))$ to a geometric $K$-homology cycle for $\ca{M}$.
We then prove that this assignment depends only on the equivalence class of geometric $K$-homology cycles.
Therefore, there exists a group homomorphism $\Phi:K_*^{geo}(\ca{M})\to K_*(\ca{A(M)})$.

Finally, we show that the resulting homomorphism is non-trivial for certain Hilbert manifolds by computing the Kasparov product with a suitable $K$-homology class.
The resulting pairing admits an intersection-theoretic description.
This description supports the view that $\Phi$ is an infinite-dimensional analogue of the $K$-theoretic Poincar\'e duality.

\section{Background and conventions}

In this section, we summarize the basic material needed throughout the paper and fix the notation and conventions. Since most of the material is standard, the reader may safely skip this section and consult it only when necessary.
However, we recommend the reader to read Section \ref{Other notational remarks}, as it contains some notation and conventions that may not be standard.

\subsection{Clifford algebras}


Throughout this paper, 
we use the following definition of the Clifford algebra.

\begin{dfn}\label{Def of Cliff}
For a finite-dimensional Euclidean space $V$, we define the {\it complex Clifford algebra of $V$} by $\Cl(V):=T(V)\otimes \bb{C}/(v^2- \|v\|^2)$.
The involution $\epsilon=-\id_V$ uniquely extends to a ring automorphism on $\Cl(V)$ (e.g., $v$ is odd and $vw$ is even for $v,w\in V$), which defines the $\bb{Z}_2$-grading on $\Cl(V)$.
The assignment $v^*:= v$ uniquely extends to an conjugate linear  anti-automorphism on $\Cl(V)$ (e.g., $(vw)^*=wv$ for $v,w\in V$), which defines the $*$-algebra structure on $\Cl(V)$.
\end{dfn}

Since this construction is independent of the choice of basis, the following makes sense.

\begin{dfn}
$(1)$ For a Euclidean vector bundle $E$ on a topological space $M$, we define $\Cl(E)$ in the obvious way.

$(2)$ For a finite-dimensional Riemannian manifold $M$, 
the $C^*$-algebra $C_0(M,\Cl(TM))$ is denoted by $Cl_\tau(M)$.
\end{dfn}

\begin{rmk}
One can see that $\Cl(V)$ is a $C^*$-algebra as follows: introduce an inner product on $\Cl(V)$ such that all monomials $e_{i_1}e_{i_2}\cdots e_{i_k}$ are of unit length and mutually orthogonal for an orthonormal basis $\{e_1,\ldots,e_n\}$; consider the faithful representation given by left multiplication $\Cl(V)\to \End(\Cl(V))$; and equip $\Cl(V)$ the $C^*$-algebra structure induced from $\End(\Cl(V))$.
Hence, in the above setting, $C_0(M,\Cl(E))$ has a $C^*$-algebra structure in the obvious way.
\end{rmk}

We next fix our convention for graded tensor products.
For two $\bb{Z}_2$-graded algebras 
$A=A_0\groplus A_1$ and $B=B_0\groplus B_1$, the graded tensor product $A\grotimes B$ is defined as follows: the underlying vector space is just $A\otimes B$, and the multiplication is twisted by the grading: $(a_1\grotimes b_1)\cdot(a_2\grotimes b_2)
:=(-1)^{|b_1||a_2|}(a_1a_2)\grotimes (b_1b_2)$, where $|b|=0$ for $b\in B_0$ and $|b|=1$ for $b\in B_1$, and similarly for $|a|$, $a\in A_0\cup A_1$. 
Intuitively, exchanging two odd elements introduces a minus sign.
For example, we have
$\Cl(V\oplus W)\cong 
\Cl(V)\grotimes
\Cl(W)$ for the orthogonal direct sum $V\oplus W$.
We also use this convention for tensor products of operators:
$(F\grotimes G)\cdot(a\grotimes b)
:=(-1)^{|a||G|}F(a)\grotimes G(b)$.


A $\bb{Z}_2$-graded $*$-representation of $\Cl(\bb{R}^m)$ on a $\bb{Z}_2$-graded Hermitian vector space $S$ is a $*$-homomorphism $\gamma:\Cl(\bb{R}^m)\to \End(S)$ preserving the $\bb{Z}_2$-grading.
Such an $S$ is called a {\bf Clifford module}.
When we regard $S$ as a module over $\Cl(\bb{R}^m)$, the left action of $\Cl(\bb{R}^m)$ on $S$ is called a {\bf Clifford multiplication}.
A Clifford module is said to be irreducible if it has no nontrivial invariant subspace.
An irreducible module over $\Cl(V)$ is also called a {\bf spinor module of $V$}.

It is known that
$\Cl(\bb{R}^2)$ has a unique irreducible representation up to isomorphism, given as follows. 
For the standard basis $\{e_1,e_2\}$, we define
$$
\gamma(e_1)=
\begin{pmatrix}
0 & 1 \\
1 & 0
\end{pmatrix},\ \ \ 
\gamma(e_2)=
\begin{pmatrix}
0 & \sqrt{-1} \\
-\sqrt{-1} & 0
\end{pmatrix}.$$
Then, $\gamma$ uniquely extends to a $*$-representation of $\Cl(\bb{R}^2)$.
There are two choices of $\bb{Z}_2$-grading structure.
We regard the grading automorphism $\epsilon=\sqrt{-1}\gamma(e_1e_2)$ as the preferred one ($-\sqrt{-1}e_1e_2$ is the other choice of the grading automorphism).
We denote this representation by $\Delta^2$.
Using the isomorphism $\Cl(\bb{R}^{2n})\cong \Cl(\bb{R}^{2})^{\grotimes n}$, we can construct the spinor module of $\bb{R}^{2n}$ by $(\Delta^2)^{\grotimes n}$.

One can check that the construction of the previous paragraph depends only on the orientation of $\bb{R}^2$ and $\bb{R}^{2n}$.
The same argument works for an oriented Euclidean vector space $V$ of dimension $2n$
: For an oriented orthonormal basis $\{e_1,e_2,\ldots,e_{2n}\}$, the involution $\epsilon=\sqrt{-1}^n\gamma(e_1e_2\cdots e_{2n})$ is the preferred grading automorphism.

\medskip


The following notion is equivalent to a Clifford module structure. It will be used to describe $Spin^c$-structures.

\begin{dfn}
An $n$-multigrading on a $\bb{Z}_2$-graded linear space $M$ consists of $n$ involutions $\epsilon_1$, $\epsilon_2$, $\cdots$, $\epsilon_n$ reversing the grading, and satisfying 
$(\epsilon_i)^2=\id$ and $\epsilon_i\epsilon_j=-\epsilon_j\epsilon_i$ for $i\neq j$.
When $M$ is a right module over an algebra $A$, we require 
each $\epsilon_i$ to be an $A$-module homomorphism, namely $(\epsilon_i m)a=\epsilon_i (ma)$ for $m\in M$ and $a\in A$.
\end{dfn}

In fact, a Clifford module structure $\gamma:\Cl(\bb{R}^n)\to \End(S)$ gives an $n$-multigrading structure:
$\gamma(e_i)$ ($i=1,2,\cdots n$) are the required involutions.
Conversely, an $n$-multigraded $\bb{Z}_2$-graded linear space $M$ admits the structure of a Clifford module over $\bb{R}^n$ by $\gamma(e_i):=\epsilon_i$.

We conclude this subsection with a comparison of the above definition and the standard one.

\begin{rmks}
$(1)$ For a real vector space $V$ equipped with a quadratic form $Q$, the Clifford algebra $\Cl(V,Q)$ is defined by $T(V)\otimes \bb{C}/(v^2+Q(v)1)$.
In this notation, our Clifford algebra is the case when $Q(v)=-\|v\|^2$.

$(2)$ $\Cl(V,\|\bullet\|^2)=T(V)\otimes \bb{C}/(v^2+\|v\|^2)$ is the most standard complex Clifford algebra.
For the moment, we denote it by $\Cl_+(V)$.
This Clifford algebra is used in \cite{BHS} to define $n$-multigradings.

$(3)$ Even if we use $\Cl(V)$, $\Cl_+(V)$ naturally appears when considering the graded right multiplication:
We define $R:V\to \End(\Cl(V))^{op}$ by
$$R(v)(w):=\epsilon(w)v$$
for $v\in V$ and $w\in \Cl(V)$.
Then
$$R(v)^2(w)=R(v)(\epsilon(w)v)
=R(v)\bbra{(-1)^{|w|}wv}
=(-1)^{|wv|}(-1)^{|w|}wv^2
=(-1)^{|w|+1}(-1)^{|w|}wv^2
=-w\|v\|^2.$$
Therefore, $R(v)^2=-\|v\|^2$, and $R$ factors through $\Cl_+(V)$.

$(4)$ The standard one $\Cl_+(V)$ is isomorphic to $\Cl(V)$ by $v\mapsto \sqrt{-1}v$.
Thus, throughout the remainder of this paper, we always use the Clifford algebra defined in Definition \ref{Def of Cliff}.
\end{rmks}

\subsection{$KK$-theory}

We refer the reader to \cite{Bla} for a standard account of $KK$-theory.


The $KK$-group $KK(A,B)$ is an abelian group associated to a pair of separable $C^*$-algebras $(A,B)$.
It simultaneously generalizes $K$-homology of $A$ and $K$-theory of $B$.

\begin{exs}
$(1)$ For a compact Hausdorff space $B$ and a complex vector bundle $E$, there is a $KK$-element $[E]\in KK(\bb{C},C(B))$.

$(2)$ For a Riemannian manifold $M$, we define $\gamma^*:TM\to \End(\Cl(TM))$ by the graded right multiplication $\gamma^*(v)w:=(-1)^{|w|}wv$.
As in \cite{Kas88}, the Dirac element of $M$ is defined by
$$[d_M]:=\bra{L^2(M,\Cl(TM)),\pi,\frac{D}{\sqrt{1+D^2}}}\in KK(Cl_\tau(M),\bb{C}),$$
where $\pi$ is the left multiplication of $Cl_\tau(M)$,
 and $D$ is
$$D=\sum_j\gamma^*(\partial_j)\partial_j$$
for an orthonormal frame $\{\partial_j\}$ of $TM$.

$(3)$ Let $A$ and $B$ be separable $C^*$-algebras, and let $\phi:A\to B$ be a $*$-homomorphism.
Then $\phi$ determines a $KK$-element $[\phi]$ by
$(B,\phi,0)\in KK(A,B)$.
In particular, a continuous proper map $f:X\to Y$ between locally compact Hausdorff spaces gives a $*$-homomorphism $f^*:C_0(Y)\to C_0(X)$, and hence a $KK$-element $[f^*]\in KK(C_0(Y),C_0(X))$.
In contrast, an open embedding $\iota:U\to X$ into a locally compact topological space gives a $*$-homomorphism $\iota_*:C_0(U)\to C_0(X)$ by the extension by zero, and hence a $KK$-element $[\iota_*]\in KK(C_0(U),C_0(X))$.
\end{exs}

By abuse of notation, we sometimes write an unbounded Kasparov module \cite{BJ,Kuc} in place of its associated bounded Kasparov module.
An unbounded Kasparov module is transformed into a bounded one by
$$(E,\pi,D)\mapsto \bra{E,\pi,\frac{D}{\sqrt{1+D^2}}}.$$


For $C^*$-algebras $A$, $B$ and $C$, assuming that $A$ and $B$ are separable and $C$ is $\sigma$-unital, the Kasparov product 
$KK(A,B)\times KK(B,C)\to KK(A,C)$ is defined.
The Kasparov product of $u\in KK(A,B)$ and $v\in KK(B,C)$ is denoted by $u\grotimes_{B}v$.
We often omit the subscript $B$.
When $B=\bb{C}$, the Kasparov product is called the external Kasparov product.
This product is associative, and the identity element $[\bm{1}_A]$ is given by $(A,\id,0)$.
The Kasparov product extends to
$$KK(A_1,B\grotimes C_1)\times
KK(B\grotimes A_2,C_2)
\to KK(A_1\grotimes A_2,C_1\grotimes C_2).$$
We use the same notation $u\grotimes v$ for this generalized product.

Associativity allows us to define the category $\fra{KK}$. Its objects are separable $C^*$-algebras, and morphisms from $A$ to $B$ are elements of $KK(A,B)$. See \cite{Mey} for details.
We use this category only to simplify notation.
For example, an equality of two Kasparov products can be rephrased as a commutative diagram in the category $\fra{KK}$.

\medskip

We define $KK^n(A,B):=KK(\CL{n}\grotimes A,B)$ following \cite{Kas88}.
Equivalently, one can define it using $n$-multigraded Kasparov $(A,B)$-modules.

Since $\Cl(\bb{R}^2)$ is $KK$-equivalent to $\bb{C}$,
we have $KK^n\cong KK^{n+2}$.
Following \cite{BHS}, we call this the {\it formal Bott periodicity}.
This isomorphism is given by the external Kasparov product with the canonical spinor module for $\Cl(\bb R^2)$:
$u\mapsto [\Delta^2]\grotimes_{\bb{C}}u$.
Using these canonical isomorphisms, we define
$$KK^\ev(A,B):=\varinjlim_{n}KK^{2n}(A,B),\ \ \ 
KK^\odd(A,B):=\varinjlim_{n}KK^{2n+1}(A,B).$$
By the formal Bott periodicity, 
$KK^n(A,B)$ is naturally isomorphic to $KK^\ev(A,B)$ if $n$ is even, and to $KK^\odd(A,B)$ if $n$ is odd.

\subsection{$Spin^c$-structure}

We adopt the following definition of a $Spin^c$-structure, following \cite{BHS}.

\begin{dfn}
For an $m$-dimensional smooth manifold $M$, a $Spin^c$-structure is a pair consisting of a Riemannian metric on $M$ and an $m$-multigraded Hermitian vector bundle $S_M$ of rank $2^m$ equipped with a {\it right} action of $\Cl(TM)$ commuting with the multigrading automorphisms. Such a vector bundle is called an {\bf $m$-multigraded spinor bundle}.
\end{dfn}
\begin{rmk}
If one has a Hermitian vector bundle $S$ equipped with a {\it left} action of $\Cl_+(TM)$, one can define a {\it right} action $\gamma$ of $\Cl(TM)$ by the graded left multiplication $s\cdot v:=(-1)^{|s|}\gamma(v)s$ for $s\in S$ and $v\in TM$.
\end{rmk}

Using this notion, we can compare $C_0(M)$ and $Cl_\tau(M)$ in the Kasparov category.

\begin{lem}\label{KK-equivalence C0 Cl}
For an $m$-dimensional $Spin^c$-manifold $M$, $C_0(M)\grotimes \Cl(\bb{R}^m)$ and $Cl_\tau(M)$ are $KK$-equivalent.
\end{lem}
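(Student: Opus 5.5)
The plan is to exhibit the $KK$-equivalence as a Morita equivalence given by the spinor bundle. The $m$-multigraded spinor bundle $S_M$ carries a right action of $\Cl(TM)$ and commuting left actions $\epsilon_1,\dots,\epsilon_m$, i.e.\ a left action of $\Cl(\bb{R}^m)$. First I will form the $\bb{Z}_2$-graded Hilbert $Cl_\tau(M)$-module $\ca{E}:=C_0(M,S_M)$, with inner product $\langle s,t\rangle(x)$ the unique element of $\Cl(T_xM)$ obtained by fibrewise identification. The left action of $C_0(M)\grotimes \Cl(\bb{R}^m)$ is by pointwise multiplication by functions together with the multigrading. This yields a cycle
$$[S_M]:=(\ca{E},\pi,0)\in KK(C_0(M)\grotimes \Cl(\bb{R}^m),Cl_\tau(M)),$$
and the zero operator is allowed because $\pi$ will take values in $\ca{K}(\ca{E})$.

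The key step is a fibrewise computation. At a point $x$, choose an orthonormal basis of $T_xM$. Then $S_{M,x}$ has rank $2^m$ and is a $(\Cl(\bb{R}^m),\Cl(T_xM))$-bimodule. Since the two actions commute and $\dim \Cl(\bb{R}^m)\grotimes\Cl(T_xM)^{op}=4^m$, the fibre $S_{M,x}$ is, up to grading conventions, an irreducible module over $\Cl(\bb{R}^m)\grotimes\Cl(T_xM)^{op}$. Moreover, $S_{M,x}\cong\Cl(T_xM)$ as a right module, because a free right module of rank one has dimension $2^m$. Hence $\ca{K}(S_{M,x})\cong \Cl(T_xM)$ acting on the left, and the map $\Cl(\bb{R}^m)\to \End_{\Cl(T_xM)}(S_{M,x})$ is an injective $*$-homomorphism between algebras of equal dimension $2^m$, so it is an isomorphism. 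Globalizing with local trivializations and a partition of unity gives $\ca{K}(\ca{E})\cong C_0(M)\grotimes\Cl(\bb{R}^m)$. Fullness of $\ca{E}$ follows because the inner products span all of $\Cl(T_xM)$ fibrewise. Thus $\ca{E}$ is a graded imprimitivity bimodule.

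Finally, I will invoke the standard fact that a graded imprimitivity bimodule gives a $KK$-equivalence whose inverse is the conjugate module $\overline{\ca{E}}=C_0(M,S_M^*)$ (see \cite{Bla}). The identities $[S_M]\grotimes[\overline{S_M}]=[\bm 1]$ and $[\overline{S_M}]\grotimes[S_M]=[\bm 1]$ are Kasparov products with zero operators, so they reduce to the isomorphisms $\ca{E}\otimes_{Cl_\tau(M)}\overline{\ca{E}}\cong C_0(M)\grotimes\Cl(\bb{R}^m)$ and $\overline{\ca{E}}\otimes\ca{E}\cong Cl_\tau(M)$ given by the inner products. The hard step is the fibrewise isomorphism $\Cl(\bb{R}^m)\cong\End_{\Cl(T_xM)}(S_{M,x})$ with the correct gradings. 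Here I will take care with the sign conventions: the $\epsilon_i$ are odd and must graded-commute with the right action, and in the odd-dimensional case I will confirm that the rank-$2^m$ normalization excludes the extra $\Cl(\bb{R}^1)$ factor. I will verify all this by an explicit computation on $\Cl(T_xM)$ itself, taking left Clifford multiplication by an orthonormal frame as the model multigrading.
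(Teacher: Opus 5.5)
Your proposal is correct and is essentially the paper's argument. The paper writes down the same cycle $[S_M]=(C_0(M,S_M),\pi\grotimes\gamma,0)$, with the fibrewise bimodule built into the model $S_M\cong P\times_{Spin^c(m),R}\Cl(\bb{R}^m)$ (left and right multiplication, inner product $w_1^*w_2$), and leaves the inverse to the reader, which your imprimitivity-bimodule argument with the conjugate module $C_0(M,S_M^*)$ supplies; one small correction is that the $\epsilon_i$ must commute with the right $\Cl(TM)$-action in the ordinary sense, $(\epsilon_i s)a=\epsilon_i(sa)$, as the paper requires and as adjointability forces, with graded commutation appearing only after you rewrite the right action as a left action of the graded opposite algebra.
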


We only construct an element of $KK(C_0(M)\grotimes \Cl(\bb{R}^m),Cl_\tau(M))$, which is used later.

Let $Spin^c(n)$ be the subgroup of $\Cl(\bb{R}^n)^\times$ generated by the products of evenly many vectors of unit length and complex numbers of modulus one.
Let $R:Spin^c(m)\to U(\Cl(\bb{R}^m))$ be the representation given by the right multiplication: $R(g)w=wg^{-1}=wg^*$, and let 
$\Ad:Spin^c(m)\to U(\Cl(\bb{R}^m))$ be the representation given by the adjoint action: $\Ad(g)w=gwg^{-1}=gwg^*$.
This action preserves $\bb{R}^m$ and it gives a representation
$\Ad:Spin^c(m)\to O(\bb{R}^m)$.
Then, for a $Spin^c$-manifold $M$ equipped with a spinor bundle $S_M$, there exists a principal $Spin^c(m)$-bundle $P$ such that 
$TM\cong P\times_{Spin^c(m),\Ad}\bb{R}^m$, and $S_M\cong P\times_{Spin^c(m),R}\Cl(\bb{R}^m)$.
Observe that $S_M$ admits a left $\Cl(\bb{R}^m)$-module structure by the left multiplication.
Note that $\Cl(TM)\cong P\times_{Spin^c(m),\Ad}\Cl(\bb{R}^m)$.
Then, the formula
$[p,w]\cdot[p,v]:=[p,wv]$
gives a right $\Cl(TM)$-module structure
for $[p,w]\in S_M=P\times_{Spin^c(m),R}\Cl(\bb{R}^m)$ and
$[p,v]\in \Cl(TM)=P\times_{Spin^c(m),\Ad}\Cl(\bb{R}^m)$,
and the formula 
$\bra{[p,w_1],[p,w_2]}{}:=[p,(w_1)^*w_2]$
gives a $\Cl(TM)$-valued inner product for $[p,w_1],[p,w_2]\in S_M=P\times_{Spin^c(m),R}\Cl(\bb{R}^m)$.
The left action $\pi\grotimes \gamma$ of $C_0(M)\grotimes \Cl(\bb{R}^m)$ is given by the tensor product of the pointwise scalar multiplication $\pi$ and the multigrading structure $\gamma$.
Then, 
$$[S_M]:=(C_0(M,S_M),\pi\grotimes \gamma,0)\in KK(C_0(M)\grotimes \Cl(\bb{R}^m),Cl_\tau(M))$$
gives a $KK$-equivalence.
We leave the construction of the inverse element to the reader.

When $M$ is even-dimensional, the following makes sense.

\begin{dfn}
Let $M$ be an $m$-dimensional Riemannian manifold.
Suppose that $m$ is even.
Then, a {\bf reduced spinor bundle} is a $\bb{Z}_2$-graded Hermitian bundle $S$ of rank $2^{\frac{m}{2}}$ equipped with a {\it right} action of $\Cl(TM)$ commuting with the grading automorphism.
\end{dfn}
\begin{rmks}
$(1)$ Spinor bundle and reduced spinor bundle with the same $Spin^c$-structure correspond to each other by the formal Bott periodicity.

$(2)$ By a similar construction of Lemma \ref{KK-equivalence C0 Cl}, for an even dimensional $Spin^c$-manifold $M$, $C_0(M)$ and $Cl_\tau(M)$ are $KK$-equivalent.
\end{rmks}

We define the restriction of the $Spin^c$-structure to the boundary.

\begin{dfn-pro}[{\cite[Definition 3.5]{BHS}}]
For an $n$-dimensional manifold with boundary $L$ equipped with an $n$-multigraded spinor bundle $S_L$, we define the restriction of the $Spin^c$-structure to $\partial L$ as follows.
Let $\nu$ be the outward pointing unit normal vector field on $\partial L$. Then the formula
$$X(u):=-(-1)^{|u|}\epsilon_1u \nu$$
defines an even self-adjoint involution.
Then, the $(+1)$-eigenspace of $X$ is an $(n-1)$-multigraded spinor bundle of $\partial L$.
\end{dfn-pro}

\begin{rmk}
The above definition differs from that of {\cite[Definition 3.5]{BHS}}, because we define the Clifford algebra 
using the negative-definite inner product.
\end{rmk}

\subsection{Other notational remarks}\label{Other notational remarks}

We adopt the following notational conventions.

\begin{itemize}
\setlength{\parskip}{0cm} 
  \setlength{\itemsep}{0cm} 

\item For a $\bb{Z}_2$-graded vector space $V=V_0\widehat{\oplus}V_1$, we denote the grading by $|\bullet|$ and the grading automorphism by $\epsilon$, i.e., for $v\in V_i$ ($i=0$ or  $1$), $|v|=i$ and $\epsilon (v)=(-1)^iv=
(-1)^{|v|}v$.
An element of $V$ belonging to either $V_0$ or $V_1$ is said to be homogeneous.

\item  Unless otherwise stated, we denote projections associated to fiber bundles by $\varpi$. We often use the same symbol for different fiber bundles.

\item Almost all inclusions between spaces are denoted by $\iota$.
We often use the same symbol to denote different inclusions.

\item We reserve the symbol $\vep$ for the positive constant associated to a Hilbert manifold satisfying Assumption \ref{Katei}. Small positive numbers are denoted by $\delta$, $\eta$, and so on.

\item We write $\gamma$ for a Clifford multiplication.

\item Unless otherwise stated, we denote the $*$-homomorphism in a Kasparov module by $\pi$.
\end{itemize}

\section{Several essential $C^*$-algebras}

\subsection{The $C^*$-algebra $\ca{S}_\vep$ and the Bott periodicity}

The main object of study in this paper is the $C^*$-algebra $\ca{A(M)}$.
In order to define it, we need the $\bb{Z}_2$-graded $C^*$-algebra $\ca{S}_\vep$ defined as follows.

\begin{dfn}
For a fixed positive number $\vep$, we define $\ca{S}_\vep$ to be $C_0(-\vep,\vep)$.
The $\bb{Z}_2$-grading automorphism is given by $\epsilon(f)(t)=f(-t)$.
Thus, the even (resp. odd) part consists of the even (resp. odd) functions.
\end{dfn}

The $C^*$-algebra $\ca{S}_\vep$ plays an important role in this paper.
This is essentially because it has the multiplier $X:f(x)\mapsto xf(x)$.
This multiplier induces a bi-algebra structure on $\ca{S}_\vep$.
The following $*$-homomorphism $\Delta$ will play an important role in the computations in Section \ref{Our construction is non-trivial}.

\begin{lem}
The $*$-homomorphism
$$\Delta f:=f(X\grotimes 1+1\grotimes X)$$
gives a coassociative coproduct on $\ca{S}_\vep$.
\end{lem}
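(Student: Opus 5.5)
We would realize $\Delta$ by functional calculus for a single odd self-adjoint multiplier of $\ca{S}_\vep\grotimes\ca{S}_\vep$, and then reduce coassociativity to naturality of functional calculus.

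\emph{Step 1: the multiplier $T$.} Put $T:=X\grotimes 1+1\grotimes X$ in the multiplier algebra $M(\ca{S}_\vep\grotimes\ca{S}_\vep)$. Since $X$ is odd and self-adjoint, the graded sign rule makes the cross terms cancel:
$$(X\grotimes 1)(1\grotimes X)=X\grotimes X=-(1\grotimes X)(X\grotimes 1).$$
Hence $T$ is odd, self-adjoint and bounded, with $T^2=X^2\grotimes 1+1\grotimes X^2$, and its spectrum lies in $[-\sqrt2\vep,\sqrt2\vep]$. For $f\in\ca{S}_\vep$ we extend $f$ by zero to a continuous function on $\bb{R}$ and set $\Delta f:=f(T)\in M(\ca{S}_\vep\grotimes\ca{S}_\vep)$ by continuous functional calculus. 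This is automatically a $*$-homomorphism. It is grading preserving because $T$ is odd: write $f=g(x^2)+x\,h(x^2)$, so that $f(T)=g(T^2)+T\,h(T^2)$, and $\epsilon(T)=-T$.

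\emph{Step 2: $\Delta f$ lies in $\ca{S}_\vep\grotimes\ca{S}_\vep$ itself, not only in the multipliers.} We expect this to be the main obstacle. Using the decomposition above, it suffices to treat $g(T^2)$ and $T\,h(T^2)$ with $g,h$ continuous on $[0,\infty)$ and vanishing on $[\vep^2,\infty)$. Moreover, by density we may assume $g,h$ are supported in $[0,\vep^2-\delta]$.

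The elements $a=X^2\grotimes 1$ and $b=1\grotimes X^2$ are even and commute. On the compact set $\{s+t\le\vep^2-\delta/2\}$ we approximate $(s,t)\mapsto g(s+t)$ uniformly by finite sums $\sum_i g_i(s)k_i(t)$ with $g_i,k_i$ supported in $[0,\vep^2)$ (Stone--Weierstrass), cutting off so that the approximants vanish near $s=\vep^2$ and $t=\vep^2$. Each term $g_i(a)k_i(b)=g_i(X^2)\grotimes k_i(X^2)$ is then a genuine elementary tensor in $\ca{S}_\vep\grotimes\ca{S}_\vep$, so $g(T^2)$ lies in the algebra. Finally, $T\,h(T^2)=(X\grotimes1)h(T^2)+(1\grotimes X)h(T^2)$, and $X\grotimes 1$ multiplies the algebra into itself, so this term lies in the algebra as well.

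\emph{Step 3: nondegeneracy and extension to multipliers.} Take an approximate unit $u_n(x)=\phi_n(x^2)$ with $\phi_n\uparrow 1$ on $[0,\vep^2)$. Then $\Delta(u_n)=\phi_n(T^2)$ converges strictly to $1$, because $\phi_n(s+t)\to 1$ locally uniformly on the open region where the elementary tensors live. Hence $\Delta$ is nondegenerate and extends uniquely to a strictly continuous map on $M(\ca{S}_\vep)$. Approximating $X$ strictly by $x\,\phi_n(x^2)$ gives $\Delta(X)=T$. The same reasoning applies to $\Delta\grotimes\id$ and $\id\grotimes\Delta$, as extensions to multipliers.

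\emph{Step 4: coassociativity.} Nondegenerate $*$-homomorphisms commute with continuous functional calculus of self-adjoint multipliers, i.e. $\psi(f(S))=f(\psi(S))$. By Step 3 we have
$$(\Delta\grotimes\id)(X\grotimes1+1\grotimes X)=X\grotimes1\grotimes1+1\grotimes X\grotimes1+1\grotimes1\grotimes X,$$
and the same holds for $(\id\grotimes\Delta)$. Applying the naturality of functional calculus to $T$, both $(\Delta\grotimes\id)\Delta f$ and $(\id\grotimes\Delta)\Delta f$ equal $f(X\grotimes1\grotimes1+1\grotimes X\grotimes1+1\grotimes1\grotimes X)$. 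This proves coassociativity.
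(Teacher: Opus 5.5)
Your overall route is the paper's. You split $f$ into even and odd parts, use $T^2=X^2\grotimes 1+1\grotimes X^2$ to land in $\ca{S}_\vep\grotimes\ca{S}_\vep$, and identify both iterated coproducts with $f(X_1+X_2+X_3)$.

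\textbf{The gap is Step 3, on which Step 4 rests: $\Delta$ is \emph{not} nondegenerate.} In the coordinates $s=x^2$, $t=y^2$ the algebra lives over $[0,\vep^2)^2$. But $\phi_n$ must vanish for $s\geq\vep^2$, so $\phi_n(s+t)=0$ wherever $s+t\geq\vep^2$, and that region meets the open square (for example near $s=t=\tfrac34\vep^2$). Hence $\|\phi_n(T^2)z-z\|\geq 1$ for every $n$ when $z=k(X^2)\grotimes k(X^2)$ with $k\in C_c([0,\vep^2))$ equal to $1$ near $\tfrac34\vep^2$. The range of $\Delta$ only generates the ideal of sections over the disc $x^2+y^2<\vep^2$. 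This is exactly why the paper's proof of Proposition~\ref{b times delta} obtains the module $C_0^{ev}(U_\vep(0))\ca{S}_\vep\grotimes\ca{S}_\vep$ and then needs a homotopy from the disc to the square.

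Worse, no $*$-homomorphism defined on $M(\ca{S}_\vep)$ can send $X$ to $T$. A $*$-homomorphism is contractive, while $\|X\|=\vep<\sqrt{2}\,\vep=\|T\|$ (since $\|T^2\|=2\vep^2$). For the same reason, no $*$-homomorphic extension of $\Delta\grotimes\id$ can satisfy the identity $(\Delta\grotimes\id)(T)=X\grotimes 1\grotimes 1+1\grotimes X\grotimes 1+1\grotimes 1\grotimes X$ that Step 4 feeds into naturality of functional calculus: the right-hand side has norm $\sqrt{3}\,\vep$. So Step 4, as written, has no valid input.

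The conclusion is true, and the repair is short: avoid multipliers in the coassociativity step. By Step 2, $\Delta f$ is a norm limit of finite sums of genuine elements $g_i(X^2)\grotimes k_i(X^2)$, $Xg_i(X^2)\grotimes k_i(X^2)$ and $g_i(X^2)\grotimes Xk_i(X^2)$, with $g_i,k_i\in C_c([0,\vep^2))$. On these, $\Delta\grotimes\id$ is computed from the definition of $\Delta$ on elements of $\ca{S}_\vep$. For example, $(\Delta\grotimes\id)(g_i(X^2)\grotimes k_i(X^2))=g_i(X_1^2+X_2^2)\,k_i(X_3^2)$ and $(\Delta\grotimes\id)(Xg_i(X^2)\grotimes k_i(X^2))=(X_1+X_2)\,g_i(X_1^2+X_2^2)\,k_i(X_3^2)$.

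Now $X_1^2,X_2^2,X_3^2$ are commuting central self-adjoint multipliers, and $\sum_i g_i(u)k_i(t)\to g(u+t)$ uniformly on the whole joint spectrum $[0,2\vep^2]\times[0,\vep^2]$ (both sides vanish for $u\geq\vep^2$). Joint functional calculus therefore gives $(\Delta\grotimes\id)\Delta f=g(S)+(X_1+X_2+X_3)h(S)$ with $S=X_1^2+X_2^2+X_3^2$, and symmetrically for $\id\grotimes\Delta$. This is the paper's explicit formula made rigorous.

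Alternatively, corestrict $\Delta$ to the disc ideal $J$. There it is nondegenerate and $\Delta(X)=T|_J$ has norm $\vep$. Then note that $f(X_1+X_2+X_3)$ is supported in the open ball, hence lies in both $J\grotimes\ca{S}_\vep$ and $\ca{S}_\vep\grotimes J$.

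Two smaller fixes:
\begin{itemize}
\item In Step 2 the Stone--Weierstrass approximation must be uniform on all of $[0,\vep^2]^2$, not only on $\{s+t\leq\vep^2-\delta/2\}$, because $\|\sum_i g_i(a)k_i(b)-g(a+b)\|$ is a supremum over the full joint spectrum. Approximate there, then multiply by $\chi(s)\chi(t)$ with $\chi=1$ on $[0,\vep^2-\delta]$ and $\chi=0$ near $\vep^2$.
\item An odd $f$ need not be of the form $xh(x^2)$ with $h$ continuous at $0$, so use that decomposition only on a dense subset. Your density reduction already allows this.
\end{itemize}
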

\begin{proof}
We divide the proof into several two steps.
See also \cite{HG}.

$(1)$ $\Delta(f)\in \ca{S}_\vep\grotimes \ca{S}_\vep$.
Indeed, we can represent $f$ by $f(x)=f_{\ev}(x^2)+xf_{\od}(x^2)$, where $f_{\ev}$ and $f_\od$ are suitable even functions.
Then, since $(X\grotimes 1+1\grotimes X)^2=X^2\grotimes 1+1\grotimes X^2$,
\begin{align*}
\Delta(f)(x,y)
&=\{f_{\ev}((X\grotimes 1+1\grotimes X)^2)+(X\grotimes 1+1\grotimes X)f_{\od}((X\grotimes 1+1\grotimes X)^2)\}(x,y) \\
&=f_{\ev}(x^2+y^2)+(x+y)f_{\od}(x^2+y^2).
\end{align*}
The right-hand side vanishes outside $(-\vep,\vep)\times(-\vep,\vep)$, and belongs to $\ca{S}_\vep\grotimes \ca{S}_\vep$.

$(2)$ $\Delta$ is coassociative. Indeed, we compute $(1\grotimes \Delta)\Delta(f)(x,y,z)$ and see that it coincides with $(\Delta\grotimes 1)\Delta(f)(x,y,z)$.
In the following, the multiplier $X$ for the $i$-th component is denoted by $X_i$ for $i=1,2,3$.
\begin{align*}
(1\grotimes \Delta)\Delta(f) 
&=
(1\grotimes \Delta)
\bbra{f_{\ev}((X_1\grotimes 1+1\grotimes X_2)^2)+
(X_1\grotimes 1+1\grotimes X_2)f_{\od}((X_1\grotimes 1+1\grotimes X_2)^2)} \\
&=
(1\grotimes \Delta)
\bbra{f_{\ev}(X_1^2\grotimes 1+1\grotimes X_2^2)+
(X_1\grotimes 1+1\grotimes X_2)f_{\od}(X_1^2\grotimes 1+1\grotimes X_2^2)} \\
&=f_{\ev}(X_1^2\grotimes 1+1\grotimes (X_2\grotimes 1+1\grotimes X_3)^2) \\
&\ \ \ 
+
(X_1\grotimes 1+1\grotimes (X_2\grotimes 1+1\grotimes X_3))f_{\od}(X_1^2\grotimes 1+1\grotimes (X_2\grotimes 1+1\grotimes X_3)^2) \\
&=f_{\ev}(X_1^2\grotimes 1+1\grotimes X_2^2\grotimes 1+1\grotimes X_3^2) \\
&\ \ \ 
+
(X_1\grotimes 1\grotimes 1+1\grotimes X_2\grotimes 1+1\grotimes 1\grotimes X_3)
f_{\od}(X_1^2\grotimes 1\grotimes 1+1\grotimes X_2^2\grotimes 1+1\grotimes1\grotimes X_3^2).
\end{align*}
Therefore, 
$(1\grotimes \Delta)\Delta(f) 
(x,y,z)=f_{\ev}(x^2+y^2+z^2)+(x+y+z)f_{\od}(x^2+y^2+z^2)$.
By the same argument, we obtain $(\Delta\grotimes 1)\Delta(f)(x,y,z)=f_{\ev}(x^2+y^2+z^2)+(x+y+z)f_{\od}(x^2+y^2+z^2)$.
\end{proof}

We introduce several $KK$-elements of $\ca{S}_\vep$.

\begin{dfn}
We define
$[b_{\pm}]\in KK(\bb{C},\ca{S}_\vep)$ by
$$[b_+]=\bra{\ca{S}_\vep,1,\frac{X}{\vep}},\text{ and }
[b_-]=\bra{\ca{S}_\vep,1,-\frac{X}{\vep}}.$$

Let $\ev_0:\ca{S}_\vep\to \bb{C}$ denote the evaluation homomorphism at $0$: $\ev_0(f):=f(0)$. It defines a $KK$-element $[\ev_0]\in KK(\ca{S}_\vep,\bb{C})$.
\end{dfn}

It is easy to see 
$[b_+]\grotimes_{\ca{S}_\vep}[\ev_0]=1\in KK(\bb{C},\bb{C})$.
We compute the Kasparov products of $[b_{\pm}]$ and $[\Delta]$.

\begin{pro}\label{b times delta}
In $KK(\bb{C},\ca{S}_\vep\grotimes \ca{S}_\vep)$, we have
$[b_+]\grotimes[\Delta]=[b_+]\grotimes_{\bb{C}}[b_+]$ and
$[b_-]\grotimes[\Delta]=[b_-]\grotimes_{\bb{C}}[b_-]$.
\end{pro}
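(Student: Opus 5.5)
Both sides are Kasparov modules over $\ca{S}_\vep\grotimes \ca{S}_\vep$, and we will identify them up to operator homotopy.

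\textbf{Left-hand side.} Since $[\Delta]$ is induced by a $*$-homomorphism, the product $[b_+]\grotimes[\Delta]$ is the push-forward of $[b_+]$ along $\Delta$. Concretely, it is represented by
$$\bra{\ca{S}_\vep\grotimes\ca{S}_\vep,\,1,\,\tfrac{1}{\vep}(X\grotimes 1+1\grotimes X)}.$$
To justify this we use the standard description of the push-forward of an unbounded cycle $(\ca{S}_\vep,1,X/\vep)$: it is $\ca{S}_\vep\grotimes_{\Delta}(\ca{S}_\vep\grotimes\ca{S}_\vep)$ with the induced operator. The map $f\grotimes g\mapsto \Delta(f)g$ identifies this module with $\ca{S}_\vep\grotimes\ca{S}_\vep$, because $\Delta$ is nondegenerate. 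Under this identification the multiplier $X$ goes to $\Delta(X)=X\grotimes1+1\grotimes X$.

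To stay in honest bounded language, we replace $X/\vep$ by the bounded multiplier $F(X)$, where $F$ is an odd function on $(-\vep,\vep)$ with $F(t)\to\pm1$ as $t\to\pm\vep$. The condition that actually makes $(\ca{S}_\vep,1,F)$ a cycle is $1-F^2\in\ca{S}_\vep$; it holds for $F=X/\vep$, since $1-t^2/\vep^2$ vanishes at the endpoints. The left-hand side is then represented by $F_L:=\Delta$ applied to $F$, i.e. the multiplier $F(x+y)$ on $(-\vep,\vep)^2$.

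\textbf{Right-hand side.} The external product $[b_+]\grotimes_{\bb{C}}[b_+]$ has module $\ca{S}_\vep\grotimes\ca{S}_\vep$. By the usual unbounded external product formula its operator is
$$\tfrac1\vep(X\grotimes1+1\grotimes X),$$
using the graded tensor convention fixed earlier. This is an odd multiplier, i.e. the function $(x+y)/\vep$, which is odd under the grading $(x,y)\mapsto(-x,-y)$.

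\textbf{Comparison.} In bounded form the right-hand side is the normalized operator $G(x,y)=(x+y)/\sqrt{\vep^2+(x+y)^2}$, or any function with the same boundary behaviour. The two sides are therefore given by the same module and essentially the same operator. The remaining content is to check that each is a genuine cycle, namely that $1-F_L^2$ and $1-G^2$ lie in $C_0((-\vep,\vep)^2)$, and that the straight line path $tF_L+(1-t)G$ stays within cycles.

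I expect this check to be the main obstacle, because the square $(-\vep,\vep)^2$ has boundary points where $x+y$ does not reach $\pm\vep$. For example, at the corner $(\vep,-\vep)$ we have $x+y=0$. Hence the literal function $(x+y)/\vep$ does not satisfy the compact resolvent condition near such points, and neither does the naive version of $F_L$. The fix is to use the Clifford-like structure rather than the naive sum: the operator should satisfy $F^2=(x^2+y^2)/\vep^2$, which tends to $1$ at every boundary point of the square. This is exactly what the graded sign convention, together with the decomposition $f=f_\ev(X^2)+Xf_\od(X^2)$ from the proof of the coproduct lemma, provides. So I would redo both descriptions in the language of "$\Delta(f)=f_\ev(x^2+y^2)+(x+y)f_\od(x^2+y^2)$". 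Taking $F$ with $F(t)^2=t^2/\vep^2$ in the functional calculus, I would check that both sides are represented by the same odd self-adjoint multiplier $T$ with $T^2=(X^2\grotimes1+1\grotimes X^2)/\vep^2$.

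Since the two sides then coincide on the nose, no homotopy is needed. The $[b_-]$ case is identical after replacing $X$ by $-X$ throughout.
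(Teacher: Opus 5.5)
Your proposal has a genuine gap. The claim that the two sides \emph{coincide on the nose} rests on two false statements, and these hide the one step of the proof that actually needs an argument. First, $\Delta$ is not nondegenerate. Since $g(\pm\vep)=0$, the formula $\Delta(g)=g_{\rm ev}(X_1^2\grotimes 1+1\grotimes X_2^2)+(X_1\grotimes 1+1\grotimes X_2)\,g_{\rm od}(X_1^2\grotimes 1+1\grotimes X_2^2)$ shows that every $\Delta(g)$ vanishes wherever $x^2+y^2\ge\vep^2$. Hence $\ca{S}_\vep\grotimes_{\Delta}(\ca{S}_\vep\grotimes\ca{S}_\vep)\cong\overline{\Delta(\ca{S}_\vep)(\ca{S}_\vep\grotimes\ca{S}_\vep)}$ is only the proper submodule $C_0(U_\vep(0))\,\ca{S}_\vep\grotimes\ca{S}_\vep$ of elements supported in the open disk of radius $\vep$. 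So $[b_+]\grotimes[\Delta]$ is represented by this submodule with operator $F'=\frac{1}{\vep}(X_1\grotimes 1+1\grotimes X_2)$. That is a legitimate cycle exactly because $1-(F')^2=1-(x^2+y^2)/\vep^2$ vanishes on the circle.

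Second, your repair for the right-hand side assumes that $(x^2+y^2)/\vep^2\to 1$ at every boundary point of the square. In fact it exceeds $1$ on the boundary except at the four edge midpoints, and equals $2$ at the corners. So on the full module $\ca{S}_\vep\grotimes\ca{S}_\vep$, which is the one carrying $[b_+]\grotimes_{\bb{C}}[b_+]$, the multiplier $T$ with $T^2=(X_1^2\grotimes 1+1\grotimes X_2^2)/\vep^2$ is not a Kasparov cycle, because $1-T^2$ does not vanish at the boundary. Neither is your $G$. The unbounded external product formula does not apply here: $(\ca{S}_\vep,1,X/\vep)$ is a bounded cycle, since $X/\vep$ has no compact resolvent. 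One must renormalize outside the disk, for example $F=\rho(X_1^2\grotimes 1+1\grotimes X_2^2)F'$ with $\rho(t)=1$ for $t\le\vep^2$ and $\rho(t)=\vep/\sqrt{t}$ for $t\ge\vep^2$, so that $F^2=1$ off the disk.

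Consequently the two sides live on different Hilbert modules (disk versus square), and their operators agree only on the disk, where $\rho=1$. The missing step is a homotopy between $(C_0(U_\vep(0))\,\ca{S}_\vep\grotimes\ca{S}_\vep,1,F')$ and $(\ca{S}_\vep\grotimes\ca{S}_\vep,1,F)$. The paper builds it as a module over $C([0,1])\grotimes\ca{S}_\vep\grotimes\ca{S}_\vep$ with operator $F$ throughout. Its fibre at $t$ is $C_0(U^t_\vep)\,\ca{S}_\vep\grotimes\ca{S}_\vep$, where $U^t_\vep$ is the union of the open disk with the open square $\{\max(|x|,|y|)<\vep t\}$. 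This is a cycle because $1-F^2$ vanishes outside the open disk, so it lies in $C_0(U_\vep(0))\subseteq C_0(U^t_\vep)$ for every $t$. Without this homotopy (or an equivalent argument that the part of $(\ca{S}_\vep\grotimes\ca{S}_\vep,1,F)$ outside the disk contributes nothing), your proof does not establish the equality.
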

\begin{proof}
We prove only the former one.

Let $\rho:[0,\infty)\to \bb{R}$ be a function satisfying
$$\rho(t)=\begin{cases}
1 & (t\leq \vep^2) \\
\frac{\vep}{\sqrt{t}}& (t\geq \vep^2).
\end{cases}$$
Let $F:=\rho(X_1^2\grotimes 1+1\grotimes X_2^2)\bra{\frac{X_1}{\vep}\grotimes 1+1\grotimes \frac{X_2}{\vep}}$.
Then, 
$$[F]:=\bra{\ca{S}_\vep\grotimes \ca{S}_\vep,1,F}$$
gives a Kasparov $(\bb{C},\ca{S}_\vep\grotimes \ca{S}_\vep)$-module.
In fact, the factor $\rho(X_1^2\grotimes 1+1\grotimes X_2^2)$ ensures that $F^2=1$ on the boundary of $[-\vep,\vep]\times[-\vep,\vep]$.
We show that both
$[\Delta]\grotimes[b_+]$ and $[b_+]\grotimes_{\bb{C}}[b_+]$ are represented by $[F]$.

For $[b_+]\grotimes_{\bb{C}}[b_+]$, it suffices to check that $F$ satisfies the condition to be a Kasparov product, and it is clear.

Let $U_\vep(0)$ be the $\vep$-neighborhood of the origin in $\bb{R}^2$.
Let $C_0^{ev}(U_\vep(0))$ be the set of continuous even functions on $U_\vep(0)$ vanishing at infinitiy.
Let us consider $[b_+]\grotimes[\Delta]
=\bra{\ca{S}_\vep,1,\frac{X}{\vep}}\grotimes \bra{\ca{S}_\vep\grotimes \ca{S}_\vep,\Delta,0}$.
The module is given by $\ca{S}_\vep\grotimes\bra{\ca{S}_\vep\grotimes\ca{S}_\vep}
\cong C_0^{ev}(U_\vep(0))\ca{S}_\vep\grotimes\ca{S}_\vep
$, where the isomorphism is given by
$g_1\grotimes (g_2\grotimes g_3)=g_1(X_1\grotimes 1+1\grotimes X_2)g_2\grotimes g_3$.
Note that the multiplier $X\grotimes 1$ acts as $X_1\grotimes 1+1\grotimes X_2$. In fact,
\begin{align*}
(Xg_1)\grotimes (g_2\grotimes g_3)
&=\bbra{(X_1\grotimes 1+1\grotimes X_2)g_1(X_1\grotimes 1+1\grotimes X_2)}g_2\grotimes g_3 \\
&=X_1\grotimes 1\cdot g_1(X_1\grotimes 1+1\grotimes X_2)g_2\grotimes g_3
+1\grotimes X_2\cdot g_1(X_1\grotimes 1+1\grotimes X_2)g_2\grotimes g_3.
\end{align*}
Let $F'=\frac{X_1}{\vep}\grotimes 1+1\grotimes \frac{X_2}{\vep}$.
Then, $[F']=\bra{C_0(U_\vep(0))\ca{S}_\vep\grotimes\ca{S}_\vep,1,\frac{X_1}{\vep}\grotimes 1+1\grotimes \frac{X_2}{\vep}}$ is a representative of the Kasparov product.

Let $U_\vep^t:=\bbra{(x,y)\in \bb{R}^2\midd x^2+y^2< \vep^2 \text{ or }{\rm Max}\{|x|,|y|\}< \vep t}$.
Let $E$ be the $\bb{C}$-$C([0,1])\grotimes {\ca{S}_\vep\grotimes\ca{S}_\vep}$-bimodule 
$$E:=C\bra{[0,1],\bbra{C_0(U_\vep^t)\ca{S}_\vep\grotimes\ca{S}_\vep}_{t\in [0,1]}}.$$
Note that 
$(\ev_0)_*E=C_0(U_\vep(0))\ca{S}_\vep\grotimes\ca{S}_\vep$ and
$(\ev_1)_*E=\ca{S}_\vep\grotimes\ca{S}_\vep$.
Since $\rho =1$ on $U_\vep(0)$, the restriction of $F$ to $C_0(U_\vep(0))\ca{S}_\vep\grotimes\ca{S}_\vep$ coincides with $F'$.
Then, $(E,1,\id\grotimes F)$ gives a homotopy between $[F]$ and 
$[F']$.
\end{proof}

For a Euclidean vector space $V$, we define the Clifford operator $C_V$ on $Cl_\tau(V)$ by
$$C_V(\phi)(v)=\gamma(v) \phi(v),$$
where $\gamma(v)$ is the Clifford multiplication by $v$ under the natural identification $T_vV\cong V$.
Then, the $KK$-element
$$[b_V]:=(Cl_\tau(V),1,C_V)\in KK(\bb{C},Cl_\tau(V))$$
is invertible and its inverse is $[d_V]$.
Therefore, $Cl_\tau(V)$ and $\bb{C}$ are $KK$-equivalent, and it is a $KK$-theoretic formulation of the Bott periodicity.

The $C^*$-algebra $\ca{S}_\vep$ enables us to formulate 
the Bott periodicity as a $*$-homomorphism, while $\Delta$ makes this formulation, in some sense, more functorial.

\begin{pro}
We consider the $*$-homomorphism $\beta_V:\ca{S}_\vep\to \ca{A}(V)$ by
$$\beta_V(g):=g(X\grotimes 1+1\grotimes C_V).$$

$(1)$ We have $[b_+]\grotimes [\beta_V]\grotimes[\ev_0]=[b_V]$.

$(2)$ For a Euclidean vector space $W$, under the identification $Cl_\tau(W)\grotimes Cl_\tau(V)\cong Cl_\tau(W\oplus V)$, we have 
$$\beta_{W\oplus V}=
(\beta_W\grotimes \id_{Cl_\tau(V)})\circ \beta_V:\ca{S}_\vep\to 
\ca{S}_\vep\grotimes Cl_\tau(V)\to
\ca{S}_\vep\grotimes Cl_\tau(W\oplus V).$$
\end{pro}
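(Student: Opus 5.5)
The plan is to reduce both statements to explicit formulas, using the even/odd decomposition from the proof that $\Delta$ is a coproduct. Here $\ca{A}(V)$ means $\ca{S}_\vep\grotimes Cl_\tau(V)$.

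The key identity is that $X\grotimes 1$ and $1\grotimes C_V$ are odd, self-adjoint, graded-commuting multipliers. Hence they anticommute in the graded tensor product, and
$$(X\grotimes 1+1\grotimes C_V)^2=X^2\grotimes 1+1\grotimes \|v\|^2 .$$
Write $g(x)=g_\ev(x^2)+xg_\od(x^2)$. Then
$$\beta_V(g)(x,v)=g_\ev(x^2+\|v\|^2)+(x+\gamma(v))g_\od(x^2+\|v\|^2).$$
This vanishes unless $x^2+\|v\|^2<\vep^2$, so $\beta_V$ is a well-defined graded $*$-homomorphism into $\ca{S}_\vep\grotimes Cl_\tau(V)$. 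This is the same computation as in step $(1)$ of the coproduct lemma.

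For $(1)$, I will use associativity to compute $[b_+]\grotimes([\beta_V]\grotimes[\ev_0])$. The product $[\beta_V]\grotimes[\ev_0]$ is represented by the $*$-homomorphism $(\ev_0\grotimes\id)\circ\beta_V:\ca{S}_\vep\to Cl_\tau(V)$, which is $g\mapsto g(C_V)$. This is functional calculus of $C_V$, and $g(C_V)$ is supported in the open ball $B_\vep(V)$.

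Next I compute $[b_+]\grotimes[g\mapsto g(C_V)]$ as in the proof of Proposition \ref{b times delta}. Since $[b_+]$ has module $\ca{S}_\vep$ with operator $X/\vep$, the product module is $\ca{S}_\vep\otimes_{\ca{S}_\vep}Cl_\tau(V)\cong C_0(B_\vep(V),\Cl(TV))$. Under this isomorphism the multiplier $X$ acts as $C_V$, so the product is represented by $(C_0(B_\vep(V),\Cl(TV)),1,C_V/\vep)$.

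Here $(C_V/\vep)^2=\|v\|^2/\vep^2$ tends to $1$ at the boundary of the ball, so this is a genuine Kasparov module. It remains to identify it with $[b_V]$. For this I use the radial diffeomorphism $B_\vep(V)\cong V$, $v\mapsto v/\sqrt{\vep^2-\|v\|^2}$, which intertwines $C_V/\vep$ with the bounded transform $C_V(1+C_V^2)^{-1/2}$ on $Cl_\tau(V)$. Alternatively, a homotopy of domains $U^t$ as in Proposition \ref{b times delta} does the same job. The bounded transform represents $[b_V]$ by the convention on unbounded modules.

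For $(2)$, both sides are $*$-homomorphisms, so I only need pointwise equality. The homomorphism $\beta_W\grotimes\id$ is nondegenerate, since $\beta_W$ of an approximate unit is an approximate unit. Therefore it extends to multipliers, and on the generating multiplier it sends $X\grotimes 1$ to $X\grotimes 1\grotimes 1+1\grotimes C_W\grotimes 1$.

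Rather than appealing to abstract functional calculus of unbounded multipliers, I will verify the identity concretely with the even/odd formula:
$$(\beta_W\grotimes\id)\bigl(g_\ev(X^2\grotimes1+1\grotimes C_V^2)+(X\grotimes1+1\grotimes C_V)g_\od(\cdots)\bigr).$$
Substituting $X^2\mapsto X^2+\|w\|^2$ and $X\mapsto X+C_W$, this becomes
$$g_\ev(x^2+\|w\|^2+\|v\|^2)+(x+\gamma(w)+\gamma(v))g_\od(x^2+\|w\|^2+\|v\|^2).$$
Under $Cl_\tau(W)\grotimes Cl_\tau(V)\cong Cl_\tau(W\oplus V)$ we have $C_W\grotimes 1+1\grotimes C_V=C_{W\oplus V}$ and $\|w\|^2+\|v\|^2=\|(w,v)\|^2$. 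So this expression equals $\beta_{W\oplus V}(g)$.

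The step I expect to need the most care is justifying the substitution, that is, that the multiplier extension of $\beta_W\grotimes\id$ commutes with functions of $X^2$. I would handle it on the dense set of functions $g_\ev(t)=e^{-st}$, using the factorization $e^{-s(a+b)}=e^{-sa}e^{-sb}$ for commuting even elements, and then conclude by continuity. A second point requiring care is keeping the Koszul signs consistent with the ordering $\ca{S}_\vep\grotimes Cl_\tau(W)\grotimes Cl_\tau(V)$.
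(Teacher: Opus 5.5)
Your part (1) is the paper's argument. You compute $[\beta_V]\grotimes[\ev_0]$ as the class of $g\mapsto g(C_V)$, take the product module $C_0(B_\vep(V),\Cl(TV))$ with $X$ acting as $C_V$, and then homotope to $[b_V]$. Your normalization $C_V/\vep$ is the correct one; the paper drops the $1/\vep$. Only the domain homotopy of Proposition \ref{b times delta}, your fallback and the paper's choice, works as stated. The radial diffeomorphism alone does not give an equality in $KK(\bb{C},Cl_\tau(V))$: it identifies $Cl_\tau(B_\vep(V))$ with $Cl_\tau(V)$, i.e.\ it changes the coefficient algebra. You would also need that extension by zero $Cl_\tau(B_\vep(V))\hookrightarrow Cl_\tau(V)$ is homotopic to that isomorphism.

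In (2), the step you flagged fails as you justify it, for two reasons.

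\begin{itemize}
\item $\beta_W$ is not nondegenerate. Every $\beta_W(g)$ vanishes outside $\{x^2+\|w\|^2<\vep^2\}$, so $\beta_W(\ca{S}_\vep)\cdot(\ca{S}_\vep\grotimes Cl_\tau(W))$ is not dense. For an even approximate unit, $\beta_W(u_n)(x,w)=u_n(\sqrt{x^2+\|w\|^2})$ tends to an indicator function, not to $1$.
\item No extension of $\beta_W\grotimes\id$ to multipliers can send $X\grotimes 1$ to $X\grotimes 1\grotimes 1+1\grotimes C_W\grotimes 1$. The former has norm $\vep$, the latter is unbounded, and such extensions are contractive.
\item The Gaussian density trick is unavailable: $e^{-st}\notin C_0([0,\vep^2))$. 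That argument is the one from \cite{HKT} for $C_0(\bb{R})$, not for $\ca{S}_\vep$.
\end{itemize}

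So the substitution $X\mapsto X+C_W$ remains unproved as written.

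It can be repaired within your route.

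\begin{itemize}
\item Both sides are $*$-homomorphisms, so it suffices to treat the dense subspace $g(x)=\phi(x^2)+x\psi(x^2)$ with $\phi,\psi\in C_0([0,\vep^2))$.
\item The function $(r,v)\mapsto\phi(r^2+\|v\|^2)$ lies in $C_0([0,\vep)\times V)$, the even scalar part of $\ca{S}_\vep\grotimes Cl_\tau(V)$. It is therefore a uniform limit of sums $\sum_k a_k(X^2)\grotimes b_k$ with $a_k\in C_0([0,\vep^2))$ and $b_k\in C_0(V)$.
\item On these elementary tensors, $\beta_W\grotimes\id$ is given by the explicit formula, producing $\sum_k a_k(x^2+\|w\|^2)b_k(v)$.
\item This converges uniformly to $\phi(x^2+\|w\|^2+\|v\|^2)$. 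The error at $(x,w,v)$ equals the error at $(\sqrt{x^2+\|w\|^2},v)$, and both sides vanish when $x^2+\|w\|^2\geq\vep^2$.
\item The odd part is the same after multiplying by $X\grotimes 1+1\grotimes C_V$. Choose the $b_k$ supported in a fixed ball so that this factor stays bounded.
\end{itemize}

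Alternatively, run your Gaussian argument for $h\mapsto h(X\grotimes 1+1\grotimes C_W)$ on all of $C_0(\bb{R})$, with values in the multiplier algebra where it is nondegenerate, and restrict to $\ca{S}_\vep$ afterwards.

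The paper argues differently. It writes $\beta_V=(\id\grotimes\beta_V^0)\circ\Delta$, uses the coassociativity of $\Delta$, and then uses $(\beta_W^0\grotimes\beta_V^0)\circ\Delta=\beta^0_{W\oplus V}$. This casts the statement as a consequence of the coproduct structure, which is reused later, for instance in the functoriality of Gysin maps. However, the paper asserts those two identities as evident, and they need the same kind of approximation argument. Your direct route, once repaired, is the more self-contained of the two.
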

\begin{proof}
$(1)$ Let $\beta_V^0(g):=g(C)$.
Then, the composition $\ev_0\circ \beta_V$ is given by $\beta_V^0$.
Thus, $[\beta_V]\grotimes[\ev_0]$ is represented by
$(Cl_\tau(V),\beta_V^0,0)$.

The Hilbert module for $[b_+]\grotimes [\beta_V]\grotimes[\ev_0]$ is by definition $\ca{S}_\vep\grotimes_{\ca{S}_\vep,\beta_V}Cl_\tau(V)$.
We can regard it as a submodule of $Cl_\tau(V)$ by the identification
$g\grotimes \phi\mapsto g(C_V)\phi$ 
for $g\in \ca{S}_\vep$ and $\phi\in Cl_\tau(V)$.
With this identification, the operator $X\grotimes 1_{Cl_\tau(V)}$ on $\ca{S}_\vep\grotimes_{\ca{S}_\vep,\beta_V^0} Cl_\tau(V)$ corresponds to
$C_V$ by the same argument of Proposition \ref{b times delta}.
Therefore, the Kasparov product is represented by
$$\bra{C_0(U_\vep)Cl_\tau(V),1,C_V},$$
where $U_\vep$ is the $\vep$-neighborhood of the origin in $V$.
It is homotopic to the Bott element by the same argument of Proposition \ref{b times delta}.

$(2)$ It is easy to see $\beta_V=(\id\grotimes \beta_V^0)\circ \Delta$.
Therefore, by coassociativity of $\Delta$, we have
\begin{align*}
(\beta_W\grotimes \id_{Cl_\tau(V)})\circ \beta_V
&=(\bbra{(\id\grotimes \beta_W^0)\circ \Delta}\grotimes \id_{Cl_\tau(V)})\circ (\id\grotimes \beta_V^0)\circ \Delta \\
&=(\id\grotimes \beta_W^0\grotimes \id)\circ (\Delta\grotimes \id_{Cl_\tau(V)})\circ (\id\grotimes \beta_V^0)\circ \Delta \\
&=(\id\grotimes \beta_W^0\grotimes \id)\circ (\id\grotimes\id\grotimes \beta_V^0)\circ (\Delta\grotimes \id_{\ca{S}_\vep})\circ \Delta \\
&=(\id\grotimes \beta_W^0\grotimes \beta_V^0)
\circ( \id_{\ca{S}_\vep}\grotimes\Delta)\circ \Delta.
\end{align*}
Then, by
$$\beta_W^0\grotimes \beta_V^0(\Delta(g))
=g(C_W\grotimes 1+1\grotimes C_V)
=\beta_{W\oplus V}^0(g),$$
we have
$$(\id\grotimes \beta_W^0\grotimes \beta_V^0)\circ
(\id_{\ca{S}_\vep}\grotimes\Delta)\circ \Delta
=(\id\grotimes \beta_{W\oplus V}^0)\circ\Delta
=\beta_{W\oplus V}.$$
\end{proof}

See \cite{HKT} for the proof of the corresponding result for $\ca{S}=C_0(\bb{R})$ equipped with the $\bb{Z}_2$-grading given by the same formula as that for our $\ca{S}_\vep$
Their proof does not use the coproduct.

\subsection{The $C^*$-algebra $\ca{A(M)}$}\label{subsection A(M)}

We briefly review the construction of the $C^*$-algebra $\ca{A(M)}$  associated to a Hilbert manifold $\ca{M}$, following the exposition in \cite{T2,T3}. The original construction is due to \cite{Yu}.
See also \cite{GWY,GWXY}.
It is a generalization of \cite{HKT,HK}. Another generalization are studied in \cite{Tro,DT}. 

We impose the following condition on a Hilbert manifold $\ca{M}$.
For the definition of Hilbert manifolds, we refer to the reader to  \cite{Lan}.

\begin{dfn}[{\cite[Definition 5.1]{GWY}}]\label{def field of Clifford algebras}
We consider the space
$$\Pi'(\ca{M}):=\prod_{(x,t)\in \ca{M}\times [0,\vep)}\Cl(t\bb{R}\oplus T_x\ca{M}),$$
where 
$$t\bb{R}:=\begin{cases}
\bb{R} & (t\neq 0) \\
0 & (t=0).\end{cases}
$$
Then we consider a $C^*$-algebra
$$\Pi'_b(\ca{M}):=\bbra{s\in \prod(\ca{M})\ \middle|\  \|s(x,t)\|\text{ is bounded.}}$$
equipped with the pointwise algebraic operations (addition, multiplication and the adjoint) and the uniform norm.
\end{dfn}
\begin{rmk}
It is possible to replace $\Pi'(\ca{M})$ with 
$$\Pi(\ca{M}):=\prod_{x\in \ca{M}}\ca{S}_\vep\grotimes \Cl(T_x\ca{M})$$ 
because we have an isomorphism $\ca{S}_\vep\cong C_0([0,\vep),\Cl(t\bb{R}))$ given by $f_\ev+Xf_\od\mapsto [t\mapsto f_\ev(t)+f_\od(t)e_1]$, where $e_1$ is the standard basis vector of $t\bb{R}$ (note that $f_\od(0)=0$).
In what follows, we use this description.
\end{rmk}

The following definition is modeled on the $C^*$-algebra associated to a Hilbert-Hadamard space \cite[Definition 5.14]{GWY}.

\begin{dfn}\label{def A(M)}
$(1)$ Let $\ca{M}$ be a Hilbert manifold. 
Suppose that its injectivity radius is greater than $2\varepsilon$ everywhere for some $0<\vep\leq \infty$. 
Let $x_0,x\in \ca{M}$, and suppose that $d(x,x_0)<2\vep$.
Then $x_0$ is contained in the image of $\exp_{x}:B_{2\vep}(T_{x}\ca{M})\to \ca{M}$, and hence $\log_x(x_0)$ is well-defined.
The local Clifford operator at $x_0$ is defined by
$$C_{x_0}(x,t):=
(t,-\log_{x}(x_0))\in T_x\ca{M}\oplus t\bb{R}.$$
Intuitively, it is given by 
$$C_{x_0}(x,t)=(t,\overrightarrow{x_0x}).$$
Note that this $\overrightarrow{x_0x}$ is a tangent vector at $x$.

$(2)$ The local Bott homomorphism $\beta_{x_0}:\ca{S}_\vep\to \Pi_b(\ca{M})$ centered at $x_0\in \ca{M}$ is defined as follows: For $f\in \ca{S}_\vep$,
$$\beta_{x_0}(f)(x,t):=
\begin{cases}
f(C_{x_0}(x,t)) & (d(x,x_0)<\vep) \\
0 & (d(x,x_0)\geq\vep),
\end{cases}$$
where $f(C_{x_0}(x,t))$ is the functional calculus in the $C^*$-algebra $\Cl(t\bb{R}\oplus T_x\ca{M})$.

$(3)$ The $C^*$-algebra $\ca{A(M)}$ is defined to be the $C^*$-subalgebra of $\Pi_b(\ca{M})$ generated by the image of the Bott homomorphisms:
$$\ca{A(M)}:=C^*\bra{\bbra{
\beta_{x_0}(f)\ \middle|\  x_0\in\ca{M}, f\in \ca{S}_\vep}}.$$
\end{dfn}

\begin{rmk}
Note that $\beta_{x_0}(f)$ is continuous.
Since it is supported on the $\vep$-neighborhood of $x_0$, we regard $\beta_{x_0}(f)$ as an infinite-dimensional version of a compactly supported continuous function.
\end{rmk}

It also shares several properties with the $C^*$-algebra of \cite{GWY}.

\begin{pro}[{\cite[Proposition 5.15]{GWY}, \cite[Proposition 4.9]{T3}}]\label{properties of GWY algebra}
$\ca{A(M)}$ is separable whenever $\ca{M}$ is separable.
\end{pro}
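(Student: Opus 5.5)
The plan is to exhibit a countable set of generators. Since $\ca{M}$ is separable, choose a countable dense subset $D\subseteq\ca{M}$. Since $\ca{S}_\vep=C_0(-\vep,\vep)$ is separable, choose a countable dense subset $F\subseteq\ca{S}_\vep$. We will show that
$$\ca{A}_0:=C^*\bra{\bbra{\beta_{y}(f)\midd y\in D,\ f\in F}}$$
equals $\ca{A(M)}$. A $C^*$-algebra generated by countably many elements is separable, because the $\bb{Q}[\sqrt{-1}]$-polynomials in the generators and their adjoints form a countable dense set. Hence it suffices to show that every generator $\beta_{x_0}(f)$ lies in the closure of $\{\beta_y(g)\mid y\in D,\ g\in F\}$.

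The first step is approximation in $f$. Each $\beta_{x_0}$ is a $*$-homomorphism into $\Pi_b(\ca{M})$: pointwise it is functional calculus, extended by zero off $B_\vep(x_0)$. Therefore it is contractive, and $\|\beta_{x_0}(f)-\beta_{x_0}(g)\|\le\|f-g\|$. So it is enough to treat $f$ in a dense subset, for instance compactly supported functions with $\operatorname{supp}f\subseteq[-r,r]$ for some $r<\vep$, and then approximate $f$ by elements of $F$.

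The second step, which is the main point, is continuity in the center. Fix such an $f$ with support in $[-r,r]$ and a point $y\in D$ with $d(x_0,y)<\delta$. Since $C_{x_0}(x,t)$ is self-adjoint with $C_{x_0}(x,t)^2=t^2+d(x,x_0)^2$, the value $f(C_{x_0}(x,t))$ vanishes when $d(x,x_0)\ge r$. Consequently both $\beta_{x_0}(f)(x,t)$ and $\beta_y(f)(x,t)$ vanish unless $x$ lies within $r+\delta<\vep$ of $x_0$ and of $y$. There the cutoff in Definition \ref{def A(M)} is inactive, so we only need to estimate
$$\|f(C_{x_0}(x,t))-f(C_y(x,t))\|.$$
Because $f$ is uniformly continuous, and by the standard estimate for functional calculus of self-adjoint elements, this quantity is small whenever $\|C_{x_0}(x,t)-C_y(x,t)\|=\|\log_x(x_0)-\log_x(y)\|$ is small. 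Approximating $f$ by polynomials on $[-\vep,\vep]$ reduces this to a Lipschitz bound for polynomials.

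The obstacle is a uniform bound $\|\log_x(x_0)-\log_x(y)\|\le K\,d(x_0,y)$ for $x$ in the relevant ball. This requires control on the differential of $\log_x$ on $B_{\vep}(x)$ inside the injectivity radius $2\vep$. I would first try to obtain it from the hypotheses of Assumption \ref{Katei}, which presumably bound the geometry through curvature bounds and a Rauch comparison for $d\exp_x$. If that fails, I would use a weaker substitute: continuity of $(x,y)\mapsto\log_x(y)$ together with a compactness-free argument, namely uniform continuity derived from the bounded-geometry assumption. This estimate is exactly what the cited \cite[Proposition 4.9]{T3} supplies, and I would invoke it here.

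Combining the two steps: given $\eta>0$, pick $g\in F$ with $\|f-g\|<\eta$, then pick $y\in D$ close enough to $x_0$ that $\|\beta_{x_0}(g)-\beta_y(g)\|<\eta$. This gives $\beta_{x_0}(f)\in\overline{\ca{A}_0}=\ca{A}_0$, so $\ca{A(M)}=\ca{A}_0$ and $\ca{A(M)}$ is separable.
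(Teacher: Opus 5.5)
Your proposal is correct and follows essentially the paper's route. The paper only cites this result, but the ingredient it proves is the norm continuity of $x_0\mapsto\beta_{x_0}(f)$ (the corollary following Assumption \ref{Katei}), which rests on exactly the bound you single out, $\|\vect{z_0m}-\vect{zm}\|\leq 2d(z_0,z)$ from the Rauch comparison theorem, so your fallback is unnecessary; the only cosmetic difference is that the paper controls the functional calculus by approximating $\Delta(f)$ with finite sums $\sum_i f_1^i\grotimes f_2^i$ rather than by a direct polynomial estimate, while your reduction to $f$ supported in $[-r,r]$ with $r<\vep$ neatly ensures both centers lie within $\vep$ of every relevant point, as that estimate requires.
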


In order to simplify several arguments on the Bott maps, we impose the following.

\begin{asm}\label{Katei}
Let $\ca{M}$ be a separable Hilbert manifold all of whose sectional curvatures are bounded above by $\delta$ and the injectivity radius is greater than $2\vep>0$ at each point.
When $\delta>0$, by possibly shrinking $\vep$, we may assume that $\vep<\frac{\pi}{2\sqrt{\delta}}$.
\end{asm}

The following is proved using the Rauch comparison theorem for Hilbert manifolds \cite{Bil}.

\begin{lem}[{\cite[Lemma 5.7]{T2}}]
If $\ca{M}$ satisfies the above assumption, for any $m\in \ca{M}$ and $v_1,v_2\in T_m\ca{M}$ such that $\|v_1\|,\|v_2\|<\vep$, we have
$$d(\exp_m(v_1),\exp_m(v_2))\geq \frac{1}{2}\|v_1-v_2\|.$$
In other words, for $m,z_0,z\in \ca{M}$ such that $d(z_0,m)<\vep$ and $d(z,m)<\vep$, 
$$\|\overrightarrow{z_0m}-\overrightarrow{zm}\|\leq 2d(z_0,z).$$
\end{lem}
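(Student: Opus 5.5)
The plan is to prove the first inequality, $d(\exp_m(v_1),\exp_m(v_2))\geq \frac{1}{2}\|v_1-v_2\|$, using the Rauch comparison theorem for Hilbert manifolds \cite{Bil}. The second formulation then follows immediately. Given $z_0,z$ with $d(z_0,m),d(z,m)<\vep$, put $v_1=\log_m(z_0)$ and $v_2=\log_m(z)$. These are well defined because the injectivity radius exceeds $2\vep$, and they satisfy $\|v_i\|<\vep$. Since $\overrightarrow{z_0m}=-\log_m(z_0)$, we have $\|\overrightarrow{z_0m}-\overrightarrow{zm}\|=\|v_1-v_2\|$. So everything reduces to a lower bound on how much $\exp_m$ can contract distances on the ball $B_\vep(T_m\ca{M})$.

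For the main inequality, I would take a minimizing geodesic, or an almost-minimizing piecewise smooth curve $c:[0,1]\to\ca{M}$, from $\exp_m(v_1)$ to $\exp_m(v_2)$. In infinite dimensions a minimizer need not exist, which is why an almost-minimizing curve is used. I would lift $c$ to $T_m\ca{M}$ through $\log_m$. This is legitimate as long as $c$ stays inside $B_{2\vep}(m)$, where $\exp_m$ is a diffeomorphism. The bound $\mathrm{length}(c)\le d(\exp_m v_1,\exp_m v_2)+\eta\le 2\vep+\eta$ is slightly too weak, since it lets $c$ reach distance up to about $3\vep$ from $m$. I would therefore split into two cases:
\begin{itemize}
\item If $d(\exp_m v_1,\exp_m v_2)\geq \vep$, the claim is trivial, because $\frac12\|v_1-v_2\|<\vep$.
\item Otherwise $c$ has length less than $\vep+\eta$, so it stays within distance less than $2\vep$ of $m$, and the lift $\tilde c=\log_m\circ c$ exists.
\end{itemize}
In the second case it suffices to show $\|d(\exp_m)_w(\xi)\|\geq\frac12\|\xi\|$ for all $\|w\|<2\vep$. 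Integrating along $\tilde c$ then gives
\[
\mathrm{length}(c)\geq\tfrac12\,\mathrm{length}(\tilde c)\geq\tfrac12\|v_1-v_2\|,
\]
and letting $\eta\to0$ finishes the proof.

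The derivative bound is the main step. Write $\xi=\xi_r+\xi_\perp$, with $\xi_r$ radial and $\xi_\perp$ orthogonal to $w$. By the Gauss lemma, which is valid on Hilbert manifolds, $d(\exp_m)_w\xi_r$ has norm $\|\xi_r\|$ and is orthogonal to $d(\exp_m)_w\xi_\perp$. The perpendicular part is $J(1)$, where $J$ is the Jacobi field along $t\mapsto\exp_m(tw)$ with $J(0)=0$ and $J'(0)=\xi_\perp$. The Rauch comparison theorem, with sectional curvature bounded above by $\delta$, gives
\[
\|J(1)\|\geq \frac{\mathrm{sn}_\delta(\|w\|)}{\|w\|}\|\xi_\perp\|,
\]
where $\mathrm{sn}_\delta(r)=\sin(\sqrt\delta r)/\sqrt\delta$ for $\delta>0$, and the ratio is at least $1$ when $\delta\le 0$. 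Here I must be careful. The hypothesis $\vep<\pi/(2\sqrt\delta)$ only controls $\sqrt\delta\,\|w\|<\pi$ when $\|w\|<2\vep$. For $\sqrt\delta r<\pi/2$ we have $\sin(\sqrt\delta r)/(\sqrt\delta r)\geq 2/\pi>\frac12$, but over the range up to $\pi$ the ratio can drop below $\frac12$.

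To avoid this I would reorganize so that only $\|w\|\le\vep$ is needed. One option is to use the geodesic triangle comparison (CAT$(\delta)$-type Toponogov for upper curvature bounds) applied directly to the triangle $m,\exp_m v_1,\exp_m v_2$, whose sides from $m$ are shorter than $\vep$. Comparing with the model space of curvature $\delta$ within radius $\pi/(2\sqrt\delta)$, where the exponential map contracts by a factor of at most $2/\pi$, gives $d\geq\frac{2}{\pi}\|v_1-v_2\|\geq\frac12\|v_1-v_2\|$. The other option is to restrict the curve argument to the straight segment $[v_1,v_2]\subset B_\vep$; this gives an upper bound on $d$, not a lower bound, which is why the comparison-triangle route is the one I would pursue. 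Justifying the triangle comparison in the Hilbert setting from \cite{Bil}, with perimeter below $2\pi/\sqrt\delta$, is the step I expect to be the main obstacle.
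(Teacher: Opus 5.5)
Your reduction of the second formulation to the first is sound. So are the trivial case $d(\exp_m v_1,\exp_m v_2)\ge\vep$ and the use of almost-minimizing curves instead of minimizers, which need not exist. Your diagnosis is also correct: a scalar Rauch bound on $d\exp_m$ over the whole region the lift $\log_m\circ c$ may visit (radius up to $2\vep$) only gives $\sin(\sqrt\delta r)/(\sqrt\delta r)$ with $\sqrt\delta r$ near $\pi$, far below $\frac12$. The paper gives no argument beyond attributing the lemma to the Rauch comparison theorem for Hilbert manifolds, via its earlier reference. So the question is whether you supply the decisive step, and you do not: you leave the comparison as an open obstacle. Also, a CAT$(\delta)$/Toponogov statement about geodesic triangles is the wrong formulation here. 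It presupposes geodesic sides, and in a Hilbert manifold $\exp_m v_1$ and $\exp_m v_2$ need not be joined by a minimizing geodesic. What you need is only a hinge-type distance inequality, and that follows directly from Rauch.

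The fix uses Rauch in comparison form rather than as a scalar bound. Let $S$ be the model space of constant curvature $\delta$ with base point $o$ and $T_oS\cong T_m\ca{M}$; for $\delta>0$, take the sphere of radius $1/\sqrt\delta$ in $\bb{R}\oplus T_m\ca{M}$. The Gauss lemma plus Rauch give $\|d(\exp_m)_w\xi\|\ge\|d(\exp^S_o)_w\xi\|$ for all $\xi$ and all $\|w\|<2\vep$; when $\delta>0$ we have $2\vep<\pi/\sqrt\delta$, so the model has no conjugate points there. Hence $\psi:=\exp^S_o\circ\log_m$ does not increase lengths of curves in $B_{2\vep}(m)$. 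Since $d(\exp_m v_1,\exp_m v_2)\le\|v_1\|+\|v_2\|<2\vep$, an almost-minimizing curve $c$ has length $L<2\vep$. Every point of $c$ is then within $L/2+\vep<2\vep$ of $m$, so your case split is unnecessary. Therefore
\[
d_S(\exp^S_o v_1,\exp^S_o v_2)\le \mathrm{length}(\psi\circ c)\le \mathrm{length}(c)\le d(\exp_m v_1,\exp_m v_2)+\eta .
\]
In $S$, the ball of radius $\vep<\pi/(2\sqrt\delta)$ about $o$ is convex. So the model geodesic between $\exp^S_o v_1$ and $\exp^S_o v_2$ stays in it, where $\|d(\exp^S_o)_w\xi\|\ge\frac{2}{\pi}\|\xi\|$. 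Lifting that geodesic gives $\|v_1-v_2\|\le\frac{\pi}{2}\,d_S(\exp^S_o v_1,\exp^S_o v_2)$. Letting $\eta\to0$ proves the lemma with constant $2/\pi>\frac12$, and constant $1$ when $\delta\le 0$. Your first attempt integrated along the lift of $c$, which can reach radius near $2\vep$. The comparison map instead moves the problem into the model, where $c$ can be replaced by a geodesic that stays at radius below $\vep$.
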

\begin{cor}
For every $f\in \ca{S}_\vep$, the map $\ca{M}\ni x_0\mapsto \beta_{x_0}(f)\in \ca{A(M)}$ is norm-continuous.
\end{cor}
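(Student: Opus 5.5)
To prove the Corollary, I will show that $\|\beta_{x_0}(f)-\beta_{z_0}(f)\|\to 0$ as $z_0\to x_0$, first for a dense set of $f$ and then for all $f$. Each $\beta_{x}$ is a $*$-homomorphism, hence has norm at most $1$. So $\|\beta_{x_0}(f)-\beta_{z_0}(f)\|\le 2\|f-g\|+\|\beta_{x_0}(g)-\beta_{z_0}(g)\|$, and it suffices to treat $g$ in a dense subset of $\ca{S}_\vep$. I would take $g$ Lipschitz with compact support in $(-\vep,\vep)$, say $\mathrm{supp}\, g\subseteq[-\vep+\eta,\vep-\eta]$ for some $\eta>0$. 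Such functions are dense in $C_0(-\vep,\vep)$, and the grading causes no trouble because the argument is pointwise.

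Fix such a $g$, and let $z_0$ satisfy $d(z_0,x_0)<\eta/2$. The norm in $\Pi_b(\ca{M})$ is the supremum over $(x,t)$, so I will estimate $\|\beta_{x_0}(g)(x,t)-\beta_{z_0}(g)(x,t)\|$ pointwise. In $\Cl(t\bb{R}\oplus T_x\ca{M})$ the element $C=(t,v)$ is self-adjoint with $C^2=t^2+\|v\|^2$. Hence $g(C)=g_\ev(t^2+\|v\|^2)+C\,g_\od(t^2+\|v\|^2)$, whose norm is governed by the values of $g$ at $\pm\sqrt{t^2+\|v\|^2}$.

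There are three cases. If $x$ lies outside both $\vep$-balls, both terms are $0$. If $d(x,x_0)<\vep$ but $d(x,z_0)\ge\vep$, then $\|C_{x_0}(x,t)\|\ge d(x,x_0)>\vep-\eta/2>\vep-\eta$, so the spectrum of $C_{x_0}(x,t)$ lies outside $\mathrm{supp}\, g$ and $g(C_{x_0}(x,t))=0$; the symmetric case is identical. This is exactly where the cutoff in the definition of $\beta$ is harmless. In the main case, where $x$ is within $\vep$ of both points, I would use the Lipschitz estimate for functional calculus of self-adjoint elements, $\|g(A)-g(B)\|\le C_g\|A-B\|$. This holds for $g$ smooth with compact support, for instance via the Fourier representation $\|g(A)-g(B)\|\le \int|\hat g(\xi)||\xi|\,d\xi\,\|A-B\|$, so I would take $g$ smooth rather than merely Lipschitz. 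Then
$$\|C_{x_0}(x,t)-C_{z_0}(x,t)\|=\|\overrightarrow{x_0x}-\overrightarrow{z_0x}\|\le 2d(x_0,z_0),$$
by the preceding Lemma (\cite[Lemma 5.7]{T2}) applied with $m=x$. This bound is uniform in $(x,t)$, so $\|\beta_{x_0}(g)-\beta_{z_0}(g)\|\le 2C_g\,d(x_0,z_0)\to0$.

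The one point needing care is the functional-calculus Lipschitz bound in the infinite-dimensional Clifford algebras. It is uniform, however, because the bound depends only on $g$ and holds in any $C^*$-algebra. I therefore do not expect a genuine obstacle; the main subtlety is the boundary case, which is handled by the support assumption on $g$.
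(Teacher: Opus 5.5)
Your proof is correct, but it takes a different route from the paper.

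\textbf{The paper's route.} The paper works in the picture $\prod_x \ca{S}_\vep\grotimes\Cl(T_x\ca{M})$ and uses the coproduct. It approximates $\Delta(f)$ by a finite sum $\sum_{i\le N} f_1^i\grotimes f_2^i$. Then $\beta_{x_0}(f)(m)$ is approximated, uniformly in $x_0$ and $m$, by $\sum_i f_1^i\grotimes f_2^i(\vect{x_0m})$. This isolates the dependence on the centre in functions of a single tangent vector, where functional calculus is explicit. Continuity then follows from the uniform continuity of the $f_2^i$ together with the same estimate $\|\vect{x_0m}-\vect{x_1m}\|\le 2d(x_0,x_1)$ that you use.

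\textbf{Your route.} You keep the $t$-variable and the tangent vector together in one self-adjoint Clifford element $(t,v)$. Your density step takes place in $\ca{S}_\vep$ itself, using contractivity of the $*$-homomorphisms $\beta_x$, rather than in $\ca{S}_\vep\grotimes\ca{S}_\vep$. You then apply the operator-Lipschitz bound $\|g(A)-g(B)\|\le C_g\|A-B\|$ for $g\in C_c^\infty$ directly to $C_{x_0}(x,t)$ and $C_{z_0}(x,t)$.

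\textbf{What each buys.} Your version gives an explicit modulus $2C_g\,d(x_0,z_0)$ for smooth $g$. It also treats cleanly the region where $x$ lies in one $\vep$-ball but not the other: your support margin $\eta$ makes both terms vanish there. The paper passes over this case silently; it is absorbed into the uniform continuity of the $f_2^i$ extended by zero. The paper's route, in turn, fits the coproduct formalism it uses repeatedly later, for $(i^f)_!$ and the products with $[b_+]$.

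\textbf{A possible simplification.} You do not need the Fourier/Duhamel machinery. Since $C=(t,v)$ satisfies $C^2=t^2+\|v\|^2$, write $g(C)=g_{\ev}(\|C\|^2)+C\,g_{\od}(\|C\|^2)$, where $g_{\ev},g_{\od}$ are smooth with compact support. This gives the Lipschitz estimate by hand, in the same way the paper estimates such differences in its proof that $(i^f)_!$ lands in $\ca{A(M)}\grotimes Cl_\tau(M)$.
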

\begin{proof}
Let $\Delta(f)=\sum_if_1^i\grotimes f_2^i$.
Approximate $\Delta(f)$ by a finite sum $\sum_{i=1}^Nf_1^i\grotimes f_2^i$.
Suppose that $d(x_0,x_1)<\vep$. For every $m\in \ca{M}$,
$\beta_{x_0}(f)-\beta_{x_1}(f)$
is approximated in norm by the function
$$m\mapsto \sum_{i=1}^Nf_1^i\grotimes \bbra{f_2^i(\vect{x_0m})-f_2^i(\vect{x_1m})}.$$
This is because each $\beta_{x_0}$ is a $*$-homomorphism, and the approximation error is uniformly controlled.
Since each $f_2^i$ is uniformly continuous, and since $\|\vect{x_0m}-\vect{x_1m}\|\leq 2d(x_0,x_1)$,
$\sup_{m\in \ca{M}}\|\beta_{x_0}(f)(m)-\beta_{x_1}(f)(m)\|$ is arbitrarily small if $d(x_0,x_1)$ is sufficiently small.
Since the approximation error can be made arbitrarily small independently of $x_0$, $x_1$ and $m$, we have the result.
\end{proof}
\begin{rmk}
If $\ca{M}$ is non-positively curved, one has $\|\overrightarrow{z_0m}-\overrightarrow{zm}\|\leq d(z_0,z)$ for any $z_0,z\in \ca{M}$.
This fact is used in \cite{GWY}.
\end{rmk}

\medskip

Next, we describe two natural homomorphisms associated to embeddings: a Gysin homomorphism for totally geodesic embeddings and a pullback-type homomorphism for inclusions into a direct product.

In \cite{T3}, we have defined a Gysin map $\tau:\ca{A}(M)\to \ca{A}(LM)$. The construction relies on the fact that the embedding $\iota:M\to LM$ is a totally geodesic isometric embedding.
The construction is as follows.
For $f\grotimes \phi\in \ca{A}(M)$,
\begin{equation}\label{Def of w w m}
\tau(f\grotimes \phi)(t,l)
:=\begin{cases}
f(t\grotimes 1+1\grotimes C^{\perp}_{\iota(x)}(l))
d\iota_x(\phi(x)) 
& (l\in U_\vep(\iota(x))) \\
0 & (l\notin U_\vep(\iota(x))).
\end{cases}
\end{equation}
It gives a $*$-homomorphism $\ca{A}(M)\to \Pi(LM)$, but it is highly non-trivial that the image of $\tau$ actually lies in $\ca{A}(LM)$.
For a totally geodesic submanifold, this is proved as follows.
The subalgebra generated by $\beta_x(f)\in \ca{A}(M)$ for $x\in M$ and $f\in \ca{S}_\vep$ is dense in $\ca{A}(M)$ by the Stone-Weierstrass theorem.
For each generator $\beta_x(f)\in \ca{A}(M)$, we have $\tau(\beta_x(f))=\beta_{\iota(x)}(f)\in \ca{A}(LM)$.
Hence, by continuity, every element $\sum f_i\grotimes \phi_i$ is mapped into $\tau(\sum f_i\grotimes \phi_i) \in \ca{A}(LM)$.
This construction also works for general (possibly infinite-dimensional) totally geodesic submanifolds and we denote it by $\iota_!$ for a totally geodesic embedding $\iota$. 
Although we do not use this construction in this paper, we include the definition for completeness.
One of the main tasks of this paper is to generalize this construction as a $KK$-element.

\begin{pro}
Let $\ca{M}$ be a Hilbert manifold and let $M$ be a totally geodesic (possibly infinite-dimensional) submanifold in $\ca{M}$.
We denote the inclusion $M\to\ca{M}$ by $\iota$.
Then, we can define $\iota_!:\ca{A}(M)\to \ca{A(M)}$ by the formula (\ref{Def of w w m}).
\end{pro}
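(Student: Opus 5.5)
The plan is to define $\iota_!$ first as a $*$-homomorphism $\ca{A}(M)\to \Pi_b(\ca{M})$, using (\ref{Def of w w m}) on elementary tensors, and then to show that its image lies in $\ca{A(M)}$ by checking this on generators.

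\textbf{Step 1: a fiberwise Bott-type map.} Fix $l\in\ca{M}$ lying in the $\vep$-neighborhood of $\iota(M)$. Since $M$ is totally geodesic and the injectivity radius exceeds $2\vep$, there is a unique nearest point $\iota(x)$ with $d(l,\iota(x))<\vep$. This gives a splitting
$$T_l\ca{M}\cong d\iota_x(T_xM)\oplus N_l,$$
obtained by parallel transport along the minimizing geodesic. Here $C^\perp_{\iota(x)}(l)$ is the normal component $\vect{\iota(x)l}\in N_l$. The fiberwise map
$$\ca{S}_\vep\grotimes \Cl(T_xM)\to \ca{S}_\vep\grotimes \Cl(T_l\ca{M})$$
is $(\id\grotimes \beta^0_{N_l}\grotimes d\iota_x)\circ(\Delta\grotimes \id)$, where $\beta^0_{N_l}(g)=g(C^\perp)$. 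It is a $*$-homomorphism because $\Delta$ is one and the functional calculus of the odd self-adjoint element $C^\perp$, which anticommutes with $d\iota_x(T_xM)$, is one. Taking the pointwise product over $l$, and setting the value to $0$ outside the $\vep$-neighborhood of $\iota(M)$, gives a bounded $*$-homomorphism on the subalgebra of sections that are supported near $M$.

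\textbf{Step 2: generators.} The key identity is
$$\iota_!(\beta_x(f))=\beta_{\iota(x)}(f),\qquad x\in M,\ f\in\ca{S}_\vep.$$
At $(l,t)$, the value $\beta_x(f)$ at the foot point is $f(t\grotimes 1+1\grotimes C^M_x(\cdot))$, and applying the map of Step 1 gives $f(t+C^M_x+C^\perp)$. Total geodesy should give
$$\vect{\iota(x)l}=d\iota\bigl(\vect{x\,x'}\bigr)+C^\perp,$$
where $x'$ is the foot point of $l$ and the first vector is parallel-transported to $l$. That is, the tangent part of $-\log_l(\iota(x))$ equals the transported $-\log_{x'}(x)$. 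I would verify this with the Pythagorean-type decomposition of the exponential map in normal (Fermi) coordinates along the totally geodesic $M$. Coassociativity of $\Delta$, as in part $(2)$ of the proposition on $\beta_V$, then gives the equality of the two functional calculi. The supports match as well, since the $\vep$-balls around $\iota(x)$ correspond under the nearest-point projection.

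\textbf{Step 3: density.} By Stone--Weierstrass, or directly from the definition of $\ca{A}(M)$, the $*$-algebra generated by the $\beta_x(f)$ is dense in $\ca{A}(M)$. Since $\iota_!$ is a $*$-homomorphism, it is contractive, so the image of $\ca{A}(M)$ lies in the closure of the algebra generated by the $\beta_{\iota(x)}(f)$, which is contained in $\ca{A(M)}$.

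\textbf{Main obstacle.} The hard step is the geometric identity in Step 2 in infinite dimensions, because the Fermi decomposition of $\vect{\iota(x)l}$ holds exactly only in flat situations. If it is only approximate, I would instead homotope, or replace $C^\perp$ by the honest normal component of $-\log_l(\iota(x))$, and redefine (\ref{Def of w w m}) so that the identity holds by construction. In that case the remaining task is to check that this redefined map is still multiplicative. For that I would use the curvature bound of Assumption \ref{Katei} together with the Lemma on $\|\vect{z_0m}-\vect{zm}\|\leq 2d(z_0,z)$ to control continuity.
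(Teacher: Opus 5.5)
Your Steps 1--3 follow the paper's argument exactly. The paper notes that (\ref{Def of w w m}) is a $*$-homomorphism into the product algebra, asserts $\iota_!(\beta_x(f))=\beta_{\iota(x)}(f)$ for totally geodesic $M$, and concludes by density and continuity. It gives no more justification for that generator identity than you do, and your suspicion is justified: Step~2 is not merely hard, it fails in curved situations.

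To see this, evaluate both sides on an even $f$, $f(s)=f_{\ev}(s^2)$, at $(t,l)$, where $x'$ is the foot point of $l$. The vectors $t$, $C^\perp(l)$ and the transported $-\log_{x'}(x)$ are mutually orthogonal. So the left side is $f_{\ev}\bigl(t^2+d(l,\iota(x'))^2+d(x',x)^2\bigr)$ and the right side is $f_{\ev}\bigl(t^2+d(l,\iota(x))^2\bigr)$. Step~2 therefore forces the Euclidean relation $d(l,\iota(x))^2=d(l,\iota(x'))^2+d(x',x)^2$ on the whole $\vep$-tube.

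Take the great subsphere $M=S(V)$ of $\ca{M}=S(H)$ (unit spheres, $V\subseteq H$ closed, $\dim V\geq 2$). It is totally geodesic, and $\ca{M}$ satisfies Assumption \ref{Katei}. There one has instead $\cos d(l,\iota(x))=\cos d(l,\iota(x'))\cos d(x',x)$, which contradicts the Euclidean relation as soon as both distances are nonzero.

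So your argument, like the paper's sketch, proves the statement only where this Pythagorean relation holds. Examples are a closed affine subspace of a Hilbert space and a slice $M\times\{n_0\}$ of a Riemannian product; in those cases your proof and the paper's coincide. A smaller point: the unique foot point in Step~1 needs an embedded normal $\vep$-tube around $M$. This does not follow from total geodesy and injectivity radius $>2\vep$. A closed geodesic of slope $(1,N)$, $N\gg 0$, in a flat $2$-torus has strands closer than $\vep$. The paper assumes such a tube tacitly as well.

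Your fallback does not close the gap.
\begin{itemize}
\item The normal component of $-\log_l(\iota(x))$ depends on the centre $x$ of the generator. It therefore does not define one map on $\ca{A}(M)$, whose elements are limits of polynomials in generators with many different centres. It is also no longer formula (\ref{Def of w w m}).
\item More decisively, take any fibrewise map of the shape in your Step~1, $g\grotimes\phi\mapsto g(t\grotimes 1+1\grotimes c(l))\,\Theta_l(\phi(x'))$. Here $c(l)$ is any vector and $\Theta_l$ is induced by any isometry of $T_{x'}M$ into $c(l)^\perp$. Such a map sends the even generator $\beta_x(f)$ to $f_{\ev}\bigl(t^2+\|c(l)\|^2+d(x',x)^2\bigr)$. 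It matches every $\beta_{\iota(x)}(f)$ only if $d(l,\iota(x))^2-d(x',x)^2$ is independent of $x$, which is the same Pythagorean condition.
\item Soft continuity estimates from Assumption \ref{Katei} are unlikely to help, because membership in these algebras is rigid. Already for a Hilbert space $H$, the section $f(t\grotimes 1+1\grotimes \lambda l)$ with constant $\lambda\neq 1$ is not in $\ca{A}(H)$. To see this, compare scalar parts at $(t,u)$ and at $(\sqrt{t^2+\|u\|^2},0)$, with $u$ orthogonal to the finitely many centres of an approximating polynomial. A rotation mixing $t$ and $u$ makes these scalar parts equal for every polynomial in generators, but they differ for this section.
\end{itemize}
What is actually missing is a proof that $\iota_!(\beta_x(f))$ lies in $\ca{A(M)}$ even though it is not a generator. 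Neither your proposal nor the paper's sketch supplies one.
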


We also formulate a pullback-type homomorphism for the inclusion into the first factor $\iota:\ca{M}\hookrightarrow \ca{M}\times X$ of the direct product of a Hilbert manifold $\ca{M}$ and a finite-dimensional complete Riemannian manifold $X$.
Then, $\iota$ is clearly a totally geodesic embedding.

$\ca{A}(\ca{M}\times X)$ is generated by $\beta^{\ca{M}\times X}_{(m_0,x_0)}(f)$ for $m_0\in \ca{M}$, $x_0\in X$ and $f\in \ca{S}_\vep$.
The Clifford operator for $\ca{M}\times X$, denoted by $C^{\ca{M}\times X}$, can be written as
$$C^{\ca{M}}\grotimes 1+1\grotimes C^X$$
under the identification $T_{(m,x)}(\ca{M}\times X)\cong T_m\ca{M}\oplus T_xX$.
For $f\in \ca{S}_\vep$, let $\Delta(f)=\sum_if_{1}^i\grotimes f_{2}^i$.
Then, for $t\in (-\vep,\vep)$ and $(m,x)\in \ca{M}\times X$ close enough to $(m_0,x_0)$, we have
\begin{align*}
\beta^{\ca{M}\times X}_{(m_0,x_0)}(f)(t,m,x)
&=f(t\grotimes 1+1\grotimes C^{\ca{M}\times X}_{(m_0,x_0)}(m,x)) \\ &=f(t\grotimes 1\grotimes 1+1\grotimes C^{\ca{M}}_{m_0}(m)\grotimes 1+1\grotimes 1\grotimes C^{X}_{x_0}(x)) \\
&=\Delta(f)(t\grotimes 1+1\grotimes C^{\ca{M}}_{m_0}(m),C^{X}_{x_0}(x)) \\
&=\sum_if_1^i(t\grotimes 1+1\grotimes C^{\ca{M}}_{m_0}(m))
\grotimes f_2^i(C^{X}_{x_0}(x))
\in\ca{A}(\ca{M})\grotimes Cl_\tau(X).
\end{align*}
Thus, every generator of $\ca{A}(\ca{M}\times X)$ belongs to $\ca{A}(\ca{M})\grotimes Cl_\tau(X)$.
Hence, $\ca{A}(\ca{M}\times X)$ can be regarded as a $C^*$-subalgebra of $\ca{A(M)}\grotimes Cl_\tau(X)$.
Therefore, for $x\in X$ and the embedding $\iota_x:\ca{M}\ni m\mapsto (m,x)\in \ca{M}\times X$, we can define $\iota_x^*$ as follows.

\begin{dfn}\label{pullback along dp}
We define $\iota_x^*$ by the composition of the following homomorphisms of $C^*$-algebras
$$\ca{A}(\ca{M}\times X)\to \ca{A}(\ca{M})\grotimes Cl_\tau(X)\xrightarrow{\id\grotimes(\text{evaluation at }x)}
\ca{A(M)}\grotimes \Cl(T_xX).$$
The corresponding $KK$-element is denoted by $[\iota_x^*]\in KK(\ca{A(M}\times X),\ca{A(M)}\grotimes \Cl(T_xX))$.
\end{dfn}

We will use this construction to prove that the homomorphism constructed in this paper is non-trivial.

\section{Geometric $K$-homology and operator $K$-theory}

\subsection{Pushforward map for Hilbert manifolds in $KK$-theory}\label{subsection PF for Hilb}

Let $M$ be a complete Riemannian manifold without boundary, and let $\ca{M}$ be a Hilbert manifold.
Let $f:M\to \ca{M}$ be a continuous map.
We would like to construct a $KK$-element playing the role of a Gysin map $[f_!]$.
Such an element has been constructed and extensively studied in \cite{CS,EM}, but we take an alternative approach in order to deal with infinite-dimensional manifolds.

\begin{dfn}
For a continuous map $f:M\to \ca{M}$, we define
$i^f:M\to \ca{M}\times {M}$ by $i^f(x):=(f(x),x)$.
We define a $*$-homomorphism $(i^f)_!:\ca{A}(M)\to \Pi(\ca{M}\times M)$ by
$$(i^f)_!(g\grotimes \phi)(t,m,x):=
\beta_{f(x)}(g)(t,m)\grotimes \phi(x)=
g(X(t)\grotimes 1+1\grotimes C^{\ca{M}}_{f(x)}(m))
\grotimes \phi(x)
$$
for $t\in (-\vep,\vep)$, $m\in \ca{M}$ and $x\in M$.
\end{dfn}

\begin{pro}
$(i^f)_!$ is a well-defined  $*$-homomorphism from $\ca{A}(M)$ into $ \ca{A(M)}\grotimes Cl_\tau(M)$.
\end{pro}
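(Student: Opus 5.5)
Here $M$ is finite-dimensional, so $\ca{A}(M)=\ca{S}_\vep\grotimes Cl_\tau(M)$. The plan is to check the statement on elementary tensors and then extend by continuity.

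\textbf{Step 1: pointwise $*$-homomorphism.} For fixed $x$, the map $g\grotimes\phi\mapsto \beta_{f(x)}(g)\grotimes\phi(x)$ is the graded tensor product of the $*$-homomorphisms $\beta_{f(x)}:\ca{S}_\vep\to\Pi_b(\ca{M})$ and $\ev_x:Cl_\tau(M)\to\Cl(T_xM)$. Both are grading-preserving: $\beta_{f(x)}$ is a functional calculus of an odd self-adjoint element, and $\ev_x$ is evaluation. Hence the graded tensor product is a $*$-homomorphism on the algebraic tensor product. Since $\ca{S}_\vep$ is nuclear, it extends to the completed tensor product with norm at most one. As $x$ varies, $(i^f)_!$ is then a pointwise $*$-homomorphism into $\Pi(\ca{M}\times M)$, and it is bounded.

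\textbf{Step 2: continuity in $x$.} I will show that for $g\in\ca{S}_\vep$ and $\phi\in Cl_\tau(M)$, the element $(i^f)_!(g\grotimes\phi)$ is the function
\[
x\mapsto \beta_{f(x)}(g)\grotimes\phi(x),
\]
viewed as a section of $\ca{A(M)}\grotimes Cl_\tau(M)$. That is, I will identify $\ca{A(M)}\grotimes Cl_\tau(M)$ with $C_0(M,\ca{A(M)}\grotimes\Cl(TM))$ and check that this function lies there.
\begin{itemize}
\item Each value lies in $\ca{A(M)}\grotimes\Cl(T_xM)$, because $\beta_{f(x)}(g)$ is a generator of $\ca{A(M)}$.
\item The function vanishes at infinity because $\phi$ does, and $\|\beta_{f(x)}(g)\|\le\|g\|$.
\item It is continuous in norm. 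The map $x\mapsto f(x)$ is continuous, and by the Corollary following \cite[Lemma 5.7]{T2}, $x_0\mapsto\beta_{x_0}(g)$ is norm-continuous. Combined with the continuity of $\phi$ in a local trivialization of $\Cl(TM)$, this gives norm-continuity of the function.
\end{itemize}

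\textbf{Step 3: extension.} Continuous sections vanishing at infinity in the bundle with fibres $\ca{A(M)}\grotimes\Cl(T_xM)$ form exactly $\ca{A(M)}\grotimes Cl_\tau(M)$. This is the standard identification $C_0(M,B)\grotimes_{C_0(M)}C_0(M,\Cl(TM))$ for the nuclear algebra $\Cl$, which can be checked locally over a trivializing cover using a partition of unity. Thus $(i^f)_!$ maps the algebraic tensor product into $\ca{A(M)}\grotimes Cl_\tau(M)$. By contractivity from Step 1 and closedness of the target, it maps all of $\ca{A}(M)$ there. Well-definedness, meaning independence of the representation as a sum of elementary tensors, follows from the pointwise description.

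The main obstacle is Step 2. One must know the norm-continuity of $x_0\mapsto\beta_{x_0}(g)$ uniformly over $\ca{M}$; this is exactly the Corollary and rests on Assumption \ref{Katei}. One must also make the identification of the graded tensor product with a section algebra rigorous. For the latter, I will first treat trivializing open sets $U\subseteq M$, where $Cl_\tau(U)\cong C_0(U)\grotimes\Cl(\bb{R}^{\dim M})$, and then patch with a partition of unity.
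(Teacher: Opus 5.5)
Your proposal is correct and follows essentially the same route as the paper. Both arguments reduce the claim to norm-continuity of $y\mapsto\beta_{f(y)}(g)$, then use a partition of unity on the compact support of $\phi$ to approximate $(i^f)_!(g\grotimes\phi)$ by elements of $\ca{A(M)}\grotimes Cl_\tau(M)$. The paper re-derives the continuity estimate by hand (treating even $g=g'(t^2)$ and odd $g=tg'(t^2)$ separately, via $\|\vect{f(y_0)m}-\vect{f(y)m}\|\le 2d(f(y_0),f(y))$) instead of citing the Corollary. It also writes out the approximation by the elementary tensors $\beta_{f(x_j)}(g)\grotimes\rho_j\phi$ explicitly, rather than going through the identification with $C_0$-sections of the bundle with fibres $\ca{A(M)}\grotimes\Cl(T_xM)$.
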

\begin{proof}
Clearly, $(i^f)_!$ gives a $*$-homomorphism from $\ca{A}(M)$ to the ``outer box'' $\Pi(\ca{M}\times M)$.
It suffices to check that the image is contained in $\ca{A(M)}\grotimes Cl_\tau(M)$.

Let $g\grotimes \phi\in \ca{S}_\vep\grotimes Cl_\tau(M)$.
For arbitrary $\eta>0$, $y_0\in M$ and $m_0\in \ca{M}$, 
we first show that there exists $\delta>0$ such that if $d(y_0,y)<\delta$ and $d(m,m_0)<\vep$, then 
$$\left\|g(t\grotimes 1+1\grotimes \vect{f(y_0)m})-
g(t\grotimes 1+1\grotimes \vect{f(y)m})\right\|<\eta.$$

To prove this,
we first consider an even function $g$.
Let $g(t)=g'(t^2)$. Then, we have
$$g(t\grotimes 1+1\grotimes \vect{f(y_0)m})-
g(t\grotimes 1+1\grotimes \vect{f(y)m})
=g'(t^2+d(f(y_0),m)^2)-g'(t^2+d(f(y),m)^2).$$
Since $g$ and $g'$ are uniformly continuous, it suffices to control $|t^2+d(f(y_0),m)^2-\bbra{t^2+d(f(y),m)^2}|$.
\begin{align*}
&|t^2+d(f(y_0),m)^2-\bbra{t^2+d(f(y),m)^2}| \\
&\ \ \ =|d(f(y_0),m)-d(f(y),m)||d(f(y_0),m)+d(f(y),m)| \\
&\ \ \ \leq d(f(y_0),f(y))|d(f(y_0),m)+d(f(y),m)| \\
&\ \ \ \leq d(f(y_0),f(y))\bbra{d(f(y_0),m_0)+d(f(y),m_0)+2d(m,m_0)} \\
&\ \ \ \leq  d(f(y_0),f(y))\bbra{d(f(y_0),m_0)+d(f(y),m_0)+2\vep} \\
&\ \ \ \leq d(f(y_0),f(y))\bbra{2d(f(y_0),m_0)+d(f(y),f(y_0))+2\vep}.
\end{align*}
Therefore, if $y$ is close enough to $y_0$, 
then $|t^2+d(f(y_0),m)^2-\bbra{t^2+d(f(y),m)^2}|$ becomes arbitrarily small.

For an odd function $g$, let $g(t)=tg'(t^2)$.
\begin{align*}
&\left\|g(t\grotimes 1+1\grotimes \vect{f(y_0)m})-
g(t\grotimes 1+1\grotimes \vect{f(y)m})\right\| \\
&\ \ \ =\left\|
(t\grotimes 1+1\grotimes \vect{f(y_0)m})
g'(t^2+d(f(y_0),m)^2)
-
(t\grotimes 1+1\grotimes \vect{f(y)m})
g'(t^2+d(f(y),m)^2)\right\| \\
&\ \ \ \leq
\left\|
(t\grotimes 1+1\grotimes \vect{f(y_0)m})
\bbra{g'(t^2+d(f(y_0),m)^2)
-g'(t^2+d(f(y),m)^2)}\right\| \\
&\ \ \ \ \ \ +
\left\|(1\grotimes \vect{f(y_0)m}-1\grotimes \vect{f(y)m})
g'(t^2+d(f(y),m)^2)\right\| \\
&\ \ \ \leq 2\vep\left|{g'(t^2+d(f(y_0),m)^2)
-g'(t^2+d(f(y),m)^2)}\right|+
\left\|\vect{f(y_0)m}-\vect{f(y)m}\right\|
|g'(t^2+d(f(y),m)^2)|.
\end{align*}
By Assumption \ref{Katei}, we have $\left\|\vect{f(y_0)m}-\vect{f(y)m}\right\|<2d(f(y_0),f(y))$.
Therefore, if $y$  and $y_0$ are close enough, 
then $\left\|g(t\grotimes 1+1\grotimes \vect{f(y_0)m})-
g(t\grotimes 1+1\grotimes \vect{f(y)m})\right\|$ becomes arbitrarily small.

Thus the map
$$(m,y)\mapsto
g(X\otimes1+1\otimes\vect{f(y)m})$$
is locally uniformly continuous in $y$ and $m$.

We now prove that $(i^f)_!(g\grotimes \phi)\in \ca{A(M)}\grotimes Cl_\tau(M)$ for $g\in \ca{S}_\vep$ and $\phi\in Cl_\tau(M)$.
It suffices to prove the claim for compactly supported $\phi$.
Let $\{U_j\}$ be an open cover of the support of $\phi$ such that the diameter of each $U_i$ is less than $\delta$, and let $\{\rho_j\}$ be a partition of unity associated to this open cover. Let $x_j\in U_j$.
Since the support of $\phi$ is compact, we may assume that the open cover is finite.
We compare $(i^f)_!(g\grotimes \rho_j\phi )$ with
$\beta_{f(x_j)}(g)\grotimes \rho_j\phi \in \ca{A(M)}\grotimes Cl_\tau(M)$.
\begin{align*}
&(i^f)_!(g\grotimes \rho_j\phi )(t,m,x)-
\beta_{f(x_j)}(g)\grotimes \rho_j\phi (t,m,x) \\
&\ \ \ =g\bra{t\grotimes 1+1\grotimes \vect{f(x_j)m}}\grotimes \rho_j(x)\phi (x)
-g\bra{t\grotimes 1+1\grotimes \vect{f(x)m}}\grotimes \rho_j(x)\phi (x) \\
&\ \ \ =\bbra{g\bra{t\grotimes 1+1\grotimes \vect{f(x_j)m}}
-g\bra{t\grotimes 1+1\grotimes \vect{f(x)m}}}\grotimes \rho_j(x)\phi (x).
\end{align*}
Since $\rho_j(x)=0$ unless $d(x,x_j)<\delta$, we may assume $d(x,x_j)<\delta$.
Therefore, 
by the preceding argument, we have
\begin{align*}
&\|(i^f)_!(g\grotimes \rho_j\phi )-
\beta_{f(x_j)}(g)\grotimes \rho_j\phi \| \\
&\ \ \ \leq
\sup_{m\in U_{\vep}(f(x_j)),x\in M}
\left\|g\bra{X\grotimes 1+1\grotimes \vect{{f(x_j)}(m)}}
-g\bra{X\grotimes 1+1\grotimes \vect{f(x)m}}\right\|
\|\rho_j(x)\phi (x)\| \\
&\ \ \ \leq \eta\|\rho_j\phi \|,
\end{align*}
and $(i^f)_!(g\grotimes \rho_j\phi )$ can be approximated by  $\beta_{f(x_j)}(g)\grotimes \rho_j\phi \in \ca{A(M)}\grotimes Cl_\tau(M)$.
Therefore, $(i^f)_!(g\grotimes \phi)=\sum_i(i^f)_!(g\grotimes \rho_j\phi)$ can be approximated by elements of $\ca{A(M)}\grotimes Cl_\tau(M)$.
\end{proof}

Let $[d_M]\in KK(Cl_\tau(M),\bb{C})$ be the Dirac element in \cite{Kas88,Kas15}.
The Kasparov product with $[d_M]$ gives an index map $KK(\bb{C},Cl_\tau(M))\to KK(\bb{C},\bb{C})=\bb{Z}$.

We now define the Gysin map for an infinite-dimensional manifold as follows.

\begin{dfn}
We define $[f_!]\in KK(\ca{A}(M),\ca{A(M)})$ by
$[f_!]:=[(i^f)_!]\grotimes [d_M].$
\end{dfn}

Our Gysin map is homotopy invariant.

\begin{pro}
If continuous maps $f_0,f_1:M\to \ca{M}$ are homotopic, we have
$[(f_0)_!]=[(f_1)_!]$.
\end{pro}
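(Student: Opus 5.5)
Since $[(f_k)_!]=[(i^{f_k})_!]\grotimes[d_M]$ for $k=0,1$, it suffices to show $[(i^{f_0})_!]=[(i^{f_1})_!]$ in $KK(\ca{A}(M),\ca{A(M)}\grotimes Cl_\tau(M))$. The plan is to build a single $*$-homomorphism
$$H:\ca{A}(M)\to C\bra{[0,1],\ca{A(M)}\grotimes Cl_\tau(M)}\cong C([0,1])\otimes \ca{A(M)}\grotimes Cl_\tau(M)$$
with $\ev_s\circ H=(i^{f_s})_!$ for $s=0,1$. The triple $\bra{C([0,1])\otimes\ca{A(M)}\grotimes Cl_\tau(M),H,0}$ is then a homotopy of Kasparov modules, so the two classes agree.

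Let $F:M\times[0,1]\to\ca{M}$ be a homotopy from $f_0$ to $f_1$, and write $f_s:=F(\bullet,s)$. Define, on elementary tensors,
$$H(g\grotimes\phi)(s)(t,m,x):=g\bra{X(t)\grotimes 1+1\grotimes C^{\ca{M}}_{f_s(x)}(m)}\grotimes\phi(x),$$
that is, $H(g\grotimes \phi)(s)=(i^{f_s})_!(g\grotimes\phi)$. This is pointwise a $*$-homomorphism into the outer box. By the preceding proposition, each $(i^{f_s})_!$ is a well-defined $*$-homomorphism into $\ca{A(M)}\grotimes Cl_\tau(M)$. Hence $H$ is a $*$-homomorphism into the algebra of all functions $[0,1]\to\ca{A(M)}\grotimes Cl_\tau(M)$, bounded in norm. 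The only remaining issue is norm-continuity in $s$.

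\textbf{Continuity in $s$ (main step).} It suffices to check continuity for $g\grotimes\phi$ with $\phi$ compactly supported, since such elements span a dense subalgebra and the $H(\bullet)(s)$ are contractive. Let $K=\mathrm{supp}\,\phi$. Then $F$ is uniformly continuous on the compact set $K\times[0,1]$. So for every $\delta>0$ there is $\sigma>0$ with $d(f_s(x),f_{s'}(x))<\delta$ whenever $|s-s'|<\sigma$ and $x\in K$.

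We then reuse the estimates from the proof of the preceding proposition, splitting $g$ into even and odd parts $g'(t^2)$ and $tg'(t^2)$. Those estimates show
$$\sup_{m,t}\left\|g\bra{t\grotimes1+1\grotimes\vect{f_s(x)m}}-g\bra{t\grotimes1+1\grotimes\vect{f_{s'}(x)m}}\right\|$$
is small uniformly in $x\in K$. The inputs are the uniform continuity of $g,g'$ and the inequality $\|\vect{z_0m}-\vect{zm}\|\le 2d(z_0,z)$ from Lemma~5.7 of \cite{T2}, which holds under Assumption~\ref{Katei}.

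One point needs care: that lemma requires $d(f_s(x),m)<\vep$ and $d(f_{s'}(x),m)<\vep$. When only one of the two distances is less than $\vep$, $m$ lies in a thin shell near the boundary of the support. There we use that $g$ vanishes at $\pm\vep$ to get smallness, exactly as in the definition of $\beta_{x_0}$ being continuous. This yields $\|H(g\grotimes\phi)(s)-H(g\grotimes\phi)(s')\|\le\eta\|\phi\|$, which is the required continuity.

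Finally, evaluating $H$ at $s=0,1$ gives $(i^{f_0})_!$ and $(i^{f_1})_!$. Hence $[(i^{f_0})_!]=[(i^{f_1})_!]$, and taking the Kasparov product with $[d_M]$ gives $[(f_0)_!]=[(f_1)_!]$. The main obstacle is the uniform estimate in the continuity step, specifically handling the shell region where the local Clifford operators are defined for only one of the two centers.
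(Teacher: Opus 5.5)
Your proposal is correct and follows the paper's proof. The paper only asserts that a homotopy between $f_0$ and $f_1$ gives a homotopy of $*$-homomorphisms $s\mapsto (i^{f_s})_!$, after which one takes the product with $[d_M]$. You supply the norm-continuity in $s$ that the paper leaves implicit: the reduction to compactly supported $\phi$, the uniform continuity of the homotopy on $\mathrm{supp}\,\phi\times[0,1]$, and the separate treatment of the shell where only one of the two local Clifford operators is nonzero. These details are sound.
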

\begin{proof}
A homotopy between continuous maps $f_0$ and $f_1$ gives a homotopy between $*$-homomorphisms $(i^{f_0})_!$ and $(i^{f_1})_!$.
\end{proof}

Our Gysin map satisfies the following functoriality property.

\begin{pro}
Let $M_1$ and $M_2$ be complete Riemannian manifolds without boundary, $\ca{M}$ be a Hilbert manifold satisfying Assumption \ref{Katei}, and $M_1\xrightarrow{f_1} M_2\xrightarrow{f_2} \ca{M}$ be continuous maps.
For simplicity, we assume $f_2$ is uniformly continuous.
Then we have
$${[b_+]}
\grotimes
[(f_1)_!]
\grotimes_{\ca{A}(M_2)}
[(f_2)_!]
={[b_+]}\grotimes
[(f_2\circ f_1)_!]$$
in $KK(Cl_\tau(M_1),\ca{A(M)})$.
\end{pro}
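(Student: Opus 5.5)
The plan is to unfold both sides into a single $*$-homomorphism followed by Dirac elements, and then to cancel the $M_2$-directions by a Bott--Dirac argument. Throughout, I read $[b_+]$ as $[b_+]\grotimes_{\bb{C}}[\bm{1}_{Cl_\tau(M_1)}]$, viewed in $KK(Cl_\tau(M_1),\ca{A}(M_1))$ via $\ca{A}(M_1)\cong\ca{S}_\vep\grotimes Cl_\tau(M_1)$ (up to the obvious identification for the finite-dimensional $M_1$).

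\textbf{Step 1: rewrite the left-hand side.} By definition, $[(f_1)_!]=[(i^{f_1})_!]\grotimes_{Cl_\tau(M_1)}[d_{M_1}]$. The Dirac element $[d_{M_1}]$ acts only on the $Cl_\tau(M_1)$ factor, while $[(f_2)_!]$ acts only on the $\ca{A}(M_2)$ factor. Associativity and commutativity of external products with $*$-homomorphisms therefore give
$$[(f_1)_!]\grotimes_{\ca{A}(M_2)}[(f_2)_!]=[\Psi]\grotimes[d_{M_2}]\grotimes_{\bb{C}}[d_{M_1}],\qquad \Psi:=\bra{(i^{f_2})_!\grotimes\id_{Cl_\tau(M_1)}}\circ(i^{f_1})_!.$$
Here $\Psi:\ca{A}(M_1)\to\ca{A(M)}\grotimes Cl_\tau(M_2)\grotimes Cl_\tau(M_1)$.

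\textbf{Step 2: compute $\Psi$ and deform it.} Using the coproduct $\Delta$ and its coassociativity, as in the proof of $\beta_{W\oplus V}=(\beta_W\grotimes\id)\circ\beta_V$, I expect
$$\Psi(g\grotimes\phi)(t,m,y,x)=g\bra{t+C^{\ca{M}}_{f_2(y)}(m)+C^{M_2}_{f_1(x)}(y)}\grotimes\phi(x).$$
This element is supported where $d(y,f_1(x))<\vep$.

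On that support I deform $f_2(y)$ to $f_2(f_1(x))$ via $s\mapsto f_2(\gamma_{x,y}(s))$, where $\gamma_{x,y}$ is the geodesic from $f_1(x)$ to $y$. This assumes the injectivity radius of $M_2$ exceeds $\vep$; otherwise I first rescale the Clifford operator on $M_2$ by a small constant, which is homotopic to the original.

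Uniform continuity of $f_2$, together with the estimate $\|\vect{z_0m}-\vect{zm}\|\le 2d(z_0,z)$ from the Lemma following Assumption \ref{Katei}, makes this a norm-continuous path of $*$-homomorphisms. The argument is the same as in the proof that $(i^f)_!$ lands in $\ca{A(M)}\grotimes Cl_\tau(M)$. At $s=1$ the homomorphism is
$$g\grotimes\phi\mapsto(\id\grotimes\Theta)\bra{\Delta(g)}\cdot(\text{terms in }x),$$
where $\Theta$ is built from $(i^{f_2\circ f_1})_!$ in the $(m,x)$-variables and from the local Bott functional calculus $h\mapsto h(C^{M_2}_{f_1(x)}(y))$ in the $y$-variable.

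\textbf{Step 3: split off the Bott factor.} After precomposing with $[b_+]$, Proposition \ref{b times delta} replaces $[b_+]\grotimes[\Delta]$ by $[b_+]\grotimes_{\bb{C}}[b_+]$. The left-hand side then becomes
$$[b_+]\grotimes[(i^{f_2\circ f_1})_!]\grotimes_{Cl_\tau(M_1)}\bra{\mathcal{B}\grotimes_{Cl_\tau(M_2)}[d_{M_2}]}\grotimes[d_{M_1}].$$
Here $\mathcal{B}\in KK(Cl_\tau(M_1),Cl_\tau(M_1)\grotimes Cl_\tau(M_2))$ is the family of local Bott elements in $M_2$ centred at $f_1(x)$. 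Concretely, $\mathcal{B}$ is built from $[b_+]\grotimes[\beta^0]$ fibrewise, and by part (1) of the proposition on $\beta_V$ it is represented by $\bra{C_0(U_\vep)\cdot,1,C^{M_2}_{f_1(x)}}$.

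It remains to show $\mathcal{B}\grotimes[d_{M_2}]=[\bm{1}_{Cl_\tau(M_1)}]$. Once that holds, the expression collapses to $[b_+]\grotimes[(i^{f_2\circ f_1})_!]\grotimes[d_{M_1}]=[b_+]\grotimes[(f_2\circ f_1)_!]$, which is the claim.

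\textbf{Main obstacle.} The family Bott--Dirac cancellation $\mathcal{B}\grotimes[d_{M_2}]=1$ is the step I expect to be hardest. It is a parametrized version of $[b_V]\grotimes[d_V]=1$, parametrized over $M_1$ via $f_1$, on a curved manifold and with the graded signs of the $Cl_\tau$ factors. My first attempt is to localize: since $\mathcal{B}$ is supported in the $\vep$-tube around the graph of $f_1$, I would use $\exp$ to identify that tube with a disc bundle in $f_1^*TM_2$. Then $[d_{M_2}]$ restricted there is the vertical Dirac operator, and I would invoke Kasparov's equivariant/family Bott periodicity \cite{Kas88} fibrewise. Homotopy invariance would handle the difference between the Riemannian Dirac operator and the flat fibrewise one.
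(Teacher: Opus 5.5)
Your proposal is correct and has the same skeleton as the paper's proof, with two genuine differences.

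\begin{itemize}
\item Your Step 1 is the paper's squares (1) and (2), with $\Psi=(j^{f_2})_!\circ(i^{f_1})_!$.
\item Your Step 2 deforms $\Psi$ to $(i^{f_1\times(f_2\circ f_1)})_!$.
\item Your Step 3 factors the endpoint through $\Delta\grotimes\id$, applies Proposition \ref{b times delta}, and splits off the family of local Bott elements on $M_2$ centred at $f_1(x)$.
\end{itemize}

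\textbf{The homotopy.} The paper first replaces $\ca{A}(M_i)$ by $\ca{S}_\delta\grotimes Cl_\tau(M_i)$ with $\delta$ small, using that $\ca{S}_\delta\hookrightarrow\ca{S}_\vep$ is invertible in $KK$. This forces $d(f_2(y),f_2(f_1(x)))<\vep/2$ on the support. The paper then linearly interpolates $\vect{f_2(y)m}$ and $\vect{f_2\circ f_1(x)m}$ inside $T_m\ca{M}$, and has to check separately that the interpolated functional calculus vanishes before the domain of $\log$ runs out.

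You instead slide the base point along $f_2\circ\gamma_{x,y}$. Every intermediate operator is then an honest local Clifford operator $C^{\ca{M}}_z(m)$, whose norm controls the support automatically. Norm-continuity follows from uniform continuity of $f_2$ together with $\|\vect{z_0m}-\vect{zm}\|\le 2d(z_0,z)$. So the shrinking to $\ca{S}_\delta$ becomes unnecessary, which is a genuine simplification. The cost is unique short geodesics in $M_2$ at scale $\vep$, but this is already implicit in defining $(i^{f_1})_!$ at all.

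\textbf{The step you flag as the main obstacle.} The paper disposes of this in one sentence, and you do not need the tube/disc-bundle localization. Kasparov's identity $\Theta_{M_2}\grotimes_{Cl_\tau(M_2)}[d_{M_2}]=[\bm{1}]$ for the local Bott element holds in $\ca{R}KK(M_2;C_0(M_2),C_0(M_2))$, not merely in $KK$. Your $\mathcal{B}$ is exactly the pullback $f_1^*\Theta_{M_2}$, after tensoring over $C_0(M_1)$ with $Cl_\tau(M_1)$. Pullback along $f_1$ commutes with the product by $[d_{M_2}]$, which does not involve the base. Hence $\mathcal{B}\grotimes[d_{M_2}]=f_1^*[\bm{1}_{M_2}]=[\bm{1}_{Cl_\tau(M_1)}]$ immediately.

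The paper phrases the splitting as a product $\grotimes_{C(M_1)}$ in $\ca{R}KK(M_1;\ldots)$ rather than factoring through $Cl_\tau(M_1)$ as you do. The two formulations are equivalent bookkeeping.
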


\begin{proof}
Let $j^{f_2}:M_1\times M_2\to \ca{M}\times M_1\times M_2$ be 
$j^{f_2}(m_1,m_2):=(f_2(m_2),m_1,m_2)$.
Note that $j^{f_2}$ can be written as $i^{f_2\circ \rm pr_2}$, where ${\rm pr}_2:M_1\times M_2\to M_2$ is the projection onto the second factor.
Hence $(j^{f_2})_!$ is defined.
Note also that $i^{f_1\times (f_2\circ f_1)}(m_1)=
((f_2\circ f_1)(m_1),f_1(m_1),m_1)$.
Let us consider the following diagram in the Kasparov category:
$$\xymatrix{
&&
\ca{A(M)}\grotimes Cl_\tau(M_1)
\ar@/^50pt/^{-\grotimes[d_{M_1}]}[rrddd] 
\ar@{}[rrddd]^{\big{(1)}}
&& 
\\
&&
\ca{A(M)}\grotimes Cl_\tau(M_2)\grotimes Cl_\tau(M_1) 
\ar_{-\grotimes [d_{M_1}]}[rd]
\ar_{-\grotimes [d_{M_2}]}[u] 
& & 
\\
&
\ca{A}(M_2)\grotimes Cl_\tau(M_1) 
\ar_{-\grotimes [d_{M_1}]}[rd]
\ar_{-\grotimes [(j^{f_2})_!]}[ru] 
\ar@{}[rr]|{\big{(2)}}& &
\ca{A(M)}\grotimes Cl_\tau(M_2) \ar_{-\grotimes [d_{M_2}]}[rd] & 
\\
\ca{A}(M_1)
\ar_{-\grotimes [(i^{f_1})_!]}[ru]
\ar@/^50pt/^{-\grotimes[(i^{f_2\circ f_1})_!]}[rruuu]
\ar@{}[rruuu]^{\big{(3)}}
 & &
\ca{A}(M_2)
\ar_{-\grotimes [(i^{f_2})_!]}[ru] & &
\ca{A(M)}.
}$$

We need to prove that $[b_+]\grotimes [(i^{f_2\circ f_1})_!]\grotimes [d_{M_1}]=
[b_+]
\grotimes [(i^{f^1})_!]\grotimes [d_{M_1}]
\grotimes [(i^{f^2})_!]\grotimes [d_{M_2}]$.
To this end, we first prove that $(1)$ and $(2)$ above commute. Then, we consider $(3)$ after taking the Kasparov product with $[b_+]$, namely, we prove that
$[b_+]\grotimes [(i^{f_2\circ f_1})_!]=
[b_+]\grotimes [(i^{f_1})_!]\grotimes[(j^{f_2})_!]\grotimes [d_{M_2}]$.

The square $(1)$ clearly commutes.

A direct computation shows that $(j^{f_2})_!=(i^{f_2})_!\grotimes \id_{Cl_\tau(M_1)}$, and hence $(2)$ commutes.

For $(3)$, we first show that we may replace $\ca{A}(M_i)$ by  the subalgebras
$\ca{S}_\delta\grotimes Cl_\tau(M_i)$ for $i=1,2$ and $\delta\ll \vep$.
We shall choose $\delta$ later so that the $\delta$-neighborhood of the graph of $f_2$ and that of $f_2\circ f_1$ are sufficiently close to each other.
It is easy to see that the inclusion $k:\ca{S}_\delta\hookrightarrow \ca{S}_\vep$ has a homotopy inverse.
We denote the corresponding inclusions
$$\ca{S}_\delta\grotimes Cl_\tau(M_1)\hookrightarrow \ca{S}_\vep\grotimes Cl_\tau(M_1)=\ca{A}(M_1)\ \text{and}\ 
\ca{S}_\delta\grotimes Cl_\tau(M_2)\hookrightarrow \ca{S}_\vep\grotimes Cl_\tau(M_2)=\ca{A}(M_2)
$$
by the same symbol $k$.
Then the following two diagrams clearly commute:
$$\begin{CD}
\ca{S}_\delta\grotimes Cl_\tau(M_1)
@>{(i^{f_1})_!}>>
\ca{S}_\delta\grotimes Cl_\tau(M_2)\grotimes Cl_\tau(M_1)
@>{(j^{f_2})_!}>>
\ca{A(M)}\grotimes Cl_\tau(M_2)\grotimes Cl_\tau(M_1)
 \\
@VkVV @VVkV @VV=V \\
\ca{A}(M_1) 
@>{(i^{f_1})_!}>> 
\ca{A}(M_2)\grotimes Cl_\tau(M_1)
@>{(j^{f_2})_!}>>
\ca{A(M)}\grotimes Cl_\tau(M_2)\grotimes Cl_\tau(M_1).
\end{CD}$$
$$\begin{CD}
\ca{S}_\delta\grotimes Cl_\tau(M_1)
@>{(i^{f_1\times (f_2\circ f_1)})_!}>>
\ca{A(M)}\grotimes Cl_\tau(M_2)\grotimes Cl_\tau(M_1)
 \\
@VkVV @VV=V \\
\ca{A}(M_1) 
@>{(i^{f_1\times (f_2\circ f_1)})_!}>>
\ca{A(M)}\grotimes Cl_\tau(M_2)\grotimes Cl_\tau(M_1),
\end{CD}$$
and the vertical arrows are invertible in the Kasparov category.

We now verify that $[(i^{f_1})_!]\grotimes[(j^{f_2})_!]
=[(i^{f_1\times (f_2\circ f_1)})_!]$.
Let $g\in \ca{S}_\delta$ and $\phi\in Cl_\tau(M_1)$.
We may assume that $\phi$ is compactly supported.
Let $\Delta(g)=\sum_ig^i_1\grotimes g^i_2$. Then,
$$(i^{f_1})_!(g\grotimes \phi)(t,m_2,m_1)=\sum_ig^i_1(t)g^i_2(\vect{f_1(m_1)m_2})\grotimes \phi(m_1)$$
and hence
$$(j^{f_2})_!(i^{f_1})_!(g\grotimes \phi)(t,x,m_2,m_1)=\sum_ig^i_1(t\grotimes 1+1\grotimes \vect{f_2(m_2)x})\grotimes g^i_2(\vect{f_1(m_1)m_2})\grotimes \phi(m_1).$$
Thanks to the coassociativity of $\Delta$, we have
\begin{align*}
\sum_ig^i_1(t\grotimes 1+1\grotimes \vect{f_2(m_2)x})\grotimes g^i_2(\vect{f_1(m_1)m_2})\grotimes \phi(m_1)
&=
\sum_ig^i_1(t)g^i_2(\vect{f_2(m_2)x}\grotimes  1+1\grotimes \vect{f_1(m_1)m_2})\grotimes \phi(m_1) \\
&=
\sum_ig^i_1(t)\Delta(g^i_2)(\vect{f_2(m_2)x}, \vect{f_1(m_1)m_2})\grotimes \phi(m_1) .
\end{align*}
On the other hand, 
$$(i^{f_1\times (f_2\circ f_1)})_!(g\grotimes \phi)(t,x,m_2,m_1)
=\sum_ig^i_1(t)\Delta(g^i_2)(\vect{f_2\circ f_1(m_1)x}, \vect{f_1(m_1)m_2})\grotimes \phi(m_1).$$
Then, we consider the linear homotopy
$$s\mapsto \sum_ig^i_1(t)\Delta(g^i_2)(s\vect{f_2\circ f_1(m_1)x}+(1-s)\vect{f_2(m_2)x}, \vect{f_1(m_1)m_2})\grotimes \phi(m_1)=:H_s(t,x,m_1,m_2).$$
Strictly speaking, it is defined as follows:
$${\small \begin{cases}
\sum_ig^i_1(t)\Delta(g^i_2)(s\vect{f_2\circ f_1(m_1)x}+(1-s)\vect{f_2(m_2)x}, \vect{f_1(m_1)m_2})\grotimes \phi(m_1)
& (d(f_2\circ f_1(m_1),x)<2\vep \text{ and }
d(f_2(m_2),x)<2\vep) \\
0& (\text{otherwise}).
\end{cases}}$$
We prove that $H_s$ is well-defined and continuous.
It suffices to check that if 
$d(f_2\circ f_1(m_1),x)>3\vep/2$ or 
$d(f_2(m_2),x)>3\vep/2$, we have $H_s=0$.
The crucial observation is that 
$$\Delta(g^i_2)(s\vect{f_2\circ f_1(m_1)x}+(1-s)\vect{f_2(m_2)x}, \vect{f_1(m_1)m_2})=0 \text{ if } d(f_1(m_1),m_2)\geq \delta.
$$
Therefore, by symmetry, it suffices to prove that if $d(f_2\circ f_1(m_1),x)>3\vep/2$  and $d(f_1(m_1),m_2)< \delta$, we have $d(f_2(m_2),x)>\vep$.
Now we choose $\delta>0$ so small that if $d(m_2,m_2')<\delta$, we have 
$d(f_2(m_2),f_2(m_2'))<\vep/2$. 
Thus, if $d(f_1(m_1),m_2)< \delta$, we have $d(f_2(m_2),f_2(f_1(m_1)))<\vep/2$. 
This is possible because $f_2$ is uniformly continuous.
Then, by the triangle inequality,
$$d(f_2(m_2),x)
\geq
d(f_2\circ f_1(m_1),x)-
d(f_2\circ f_1(m_1),f_2(m_2))
>\vep.$$
Therefore $H_s$ is well-defined and continuous.

Finally, we compare $[b_+]\grotimes [(i^{f_2\circ f_1})_!]$ and $[b_+]\grotimes [(i^{f_1\times (f_2\circ f_1)})_!]\grotimes [d_{M_2}]$.
For this, we use the language of $\ca{R}KK$-theory, namely, that of fields of Kasparov modules.
See \cite{Kas88,LeG} for details on this theory.
In general, for a locally compact Hausdorff space $X$, a $C^*$-algebra $A$ is called a $C_0(X)$-algebra if it is equipped with a nondegenerate structure homomorphism from $C_0(X)$ to the center of the multiplier algebra of $A$.
If $A$ is a $C_0(X)$-algebra, it is known that there exists a field of $C^*$-algebras $\{A_x\}_{x\in X}$ such that $A$ is the ``section algebra'' of this field \cite{Dix,Blan}.
In the same way, a Kasparov module over $X$ can be described as a field of Kasparov modules parametrized by $X$.

Then, $\ca{A}(M_1)$, $\ca{A}(M_2)\grotimes Cl_\tau(M_1)$ and 
$\ca{A}(\ca{M})\grotimes Cl_\tau(M_2\times M_1)$ correspond, respectively, to the field 
$\{\ca{S}_\vep\grotimes \Cl(T_{m_1}M_1)\}_{m_1\in M_1}$, 
$\{\ca{A}(M_2)\grotimes \Cl(T_{m_1}M_1)\}
_{m_1\in M_1}$ and $\{\ca{A(M)}\grotimes Cl_\tau(M_2)\grotimes \Cl(T_{m_1}M_1)\}_{m_1\in M_1}$.

We describe $(i^{f_2\circ f_1})_!$ and $(i^{f_1\times (f_2\circ f_1)})_!$ in terms of fields of Kasparov modules.
First, $(i^{f_2\circ f_1})_!$ corresponds to 
$$\bbra{\ca{A(M)}\grotimes \Cl(T_{m_1}M_1),\beta_{f_2\circ f_1(m_1)}\grotimes \id,0}_{m_1\in M_1}.$$
In order to describe $(i^{f_1\times (f_2\circ f_1)})_!$,
we define $\beta^0_{m}:\ca{S}_\vep\to Cl_\tau(M_2)$ by $ \beta^0_{m}(g)(m'):=g(\vect{mm'})$ for $m,m'\in M_2$.
Then, $(i^{f_1\times (f_2\circ f_1)})_!$ corresponds to 
\begin{align*}
&\bbra{\ca{A(M)}\grotimes Cl_\tau(M_2)\grotimes \Cl(T_{m_1}M_1),
(\beta_{f_2\circ f_1(m_1)}\grotimes \beta^0_{f_1(m_1)}\grotimes \id)
\circ 
(\Delta\grotimes \id),0}_{m_1\in M_1}
\end{align*}
This field is clearly the Kasparov product of 
$[\Delta]\grotimes_{\bb{C}}[{\bf 1}_{Cl_\tau(M_1)}]\in \ca{R}KK(M_1;\ca{A}(M_1),\ca{S}_\vep\grotimes \ca{A}(M_1))$ and 
\begin{align*}
&\bbra{\ca{A(M)}\grotimes Cl_\tau(M_2)\grotimes \Cl(T_{m_1}M_1),
\beta_{f_2\circ f_1(m_1)}\grotimes \beta^0_{f_1(m_1)}\grotimes \id,0}_{m_1\in M_1}.
\end{align*}
The latter represents an element of $\ca{R}KK(M_1;\ca{S}_\vep\grotimes \ca{A}(M_1),\ca{A(M)}\grotimes Cl_\tau(M_2\times M_1))
$.
Then, under the isomorphism 
$$\ca{A(M)}\grotimes Cl_\tau(M_2\times M_1)
\cong 
\bbra{\ca{A(M)}\grotimes Cl_\tau(M_1)}\grotimes_{C(M_1)}\bbra{Cl_\tau(M_2)\grotimes C(M_1)},$$ 
the above field is the external Kasparov product over $M_1$ of
\begin{align*}
&\bbra{\ca{A(M)}\grotimes \Cl(T_{m_1}M_1),
\beta_{f_2\circ f_1(m_1)}\grotimes \id,0}_{m_1\in M_1}
\end{align*}
which is precisely the field corresponding to $(i^{f_2\circ f_1})_!$, and
\begin{align*}
&\bbra{Cl_\tau(M_2),
\beta^0_{f_1(m_1)},0}_{m_1\in M_1}.
\end{align*}
We denote the corresponding $\ca{R}KK$-element to this Kasparov module by 
$$[\{\beta^0_{f_1}\}]\in \ca{R}KK(M_1;\ca{S}_\vep \grotimes C(M_1),Cl_\tau(M_2)\grotimes C(M_1)).$$

Thus, $(i^{f_1\times (f_2\circ f_1)})_!$ represents the Kasparov product
$$\bbra{[\Delta]\grotimes[\bm{1}_{Cl_\tau(M_1)}]}
\grotimes_{\ca{S}_\vep\grotimes\ca{A}(M_1)}
\bbra{[(i^{f_2\circ f_1})_!]\grotimes_{C(M_1)}[\{\beta^0_{f_1}\}]}.$$

Taking the Kasparov product with $[b_+]$, we have
\begin{align*}
&[b_+]\grotimes \bbra{[\Delta]\grotimes[\bm{1}_{Cl_\tau(M_1)}]}
\grotimes_{\ca{S}_\vep\grotimes\ca{A}(M_1)}
\bbra{[(i^{f_2\circ f_1})_!]\grotimes_{C(M_1)}[\{\beta^0_{f_1}\}]} \\
&=\bbra{[b_+]\grotimes [b_+]\grotimes [\bm{1}_{Cl_\tau(M_1)}]}
\grotimes _{\ca{S}_\vep\grotimes\ca{A}(M_1)}
\bbra{[(i^{f_2\circ f_1})_!]\grotimes_{C(M_1)}[\{\beta^0_{f_1}\}]} \\
&=\bbra{[b_+]\grotimes_{\ca{S}_\vep} [(i^{f_2\circ f_1})_!]}
\grotimes_{C(M_1)}
\bbra{[b_+]\grotimes_{\ca{S}_\vep}[\{\beta^0_{f_1}\}]}
\end{align*}

Then, a direct computation shows that $[b_+]\grotimes_{\ca{S}_\vep}[\{\beta^0_{f_1}\}]$ is given by 
$\bbra{
Cl_\tau(M_2),1,\frac{C_{f_1(m_1)}^{M_2}}{\vep}
}_{m_1\in M_1}$ and it is precisely the pullback of the local Bott element on $M_2$ by $f_1$.
For the definition of local Bott element, see \cite{Kas88,Kas15}.
Since the Kasparov product of the local Bott element and the Dirac element is $[\bm{1}_{M_2}]$ as shown in \cite{Kas88}, we have
$$[b_+]\grotimes [(i^{f_1\times (f_2\circ f_1)})_!]\grotimes [d_{M_2}]
=[b_+]\grotimes_{\ca{S}_\vep} [(i^{f_2\circ f_1})_!].$$
Combining with the preceding argument, we obtain the result.
\end{proof}

We have defined the Gysin map in the language of $KK$-theory.
We now compare it with the classical $K$-theory Gysin map
$$f_!:K_*(Cl_\tau(X))\to K_*(Cl_\tau(Y))$$
for a smooth map $f:X\to Y$ between finite-dimensional manifolds $X$ and $Y$
in \cite{ASi1,Fur}.
The Gysin map for topological $K$-theory is defined as follows:
\begin{itemize}
\item Factor $f$ as the composition of a closed embedding $i$ and a fiber bundle projection $p$.
\item Define $(i^f)_!$ as the composition of the Thom isomorphism and the extension by zero.
\item Define $p_!$ as the fiber index homomorphism.
\item $f_!:=p_!\circ (i^f)_!$.
\end{itemize}

We follow essentially the same recipe in this paper, except that we specify the embedding and the projection; 
tensor with ${\ca{S}_\vep}$; 
and construct the Thom isomorphism (bundle version of the Bott periodicity) in a way inspired by \cite{HKT}, 
where the Bott periodicity is formulated as the $*$-homomorphism
$$\beta:\ca{S}\ni g\mapsto g(X\grotimes 1+1\grotimes C)\in \ca{S}\grotimes Cl_\tau(\bb{R}^n).$$

If $\ca{M}$ is finite-dimensional, our $[f_!]$ is consistent with the classical one in the following sense.
The classical Gysin map is denoted by $f_!^{cl}$.

\begin{pro}
For $f:M\to {N}$ a smooth map between closed finite-dimensional Riemannian manifolds, and a $p$-graded Clifford bundle $E$ over $M$, we have
$$\bra{[b_+]\grotimes_{\bb{C}} 
[E]}
\grotimes_{\ca{A}(M)} 
[f_!]\grotimes_{\ca{S}_\vep} 
[\ev_0]=f_!^{cl}[E].$$
\end{pro}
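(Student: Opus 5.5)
The plan is to unwind the definition $[f_!]=[(i^f)_!]\grotimes[d_M]$ and regroup the product using associativity and commutativity of external products. In the finite-dimensional setting $\ca{A}(N)=\ca{S}_\vep\grotimes Cl_\tau(N)$ and $\ca{A}(M)=\ca{S}_\vep\grotimes Cl_\tau(M)$. Since $[d_M]$ acts only on the $Cl_\tau(M)$ factor while $[\ev_0]$ acts only on the $\ca{S}_\vep$ factor, we can first pair $[b_+]$ with $(i^f)_!$ and then with $[\ev_0]$. We postpone $[d_M]$ to the end. The left-hand side then becomes
$$[E]\grotimes_{Cl_\tau(M)}\bigl([b_+]\grotimes_{\ca{S}_\vep}[(i^f)_!]\grotimes_{\ca{S}_\vep}[\ev_0]\bigr)\grotimes_{Cl_\tau(M)}[d_M],$$
where the middle term lies in $KK(Cl_\tau(M),Cl_\tau(N)\grotimes Cl_\tau(M))$.

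The key step is to identify this middle term. Exactly as in part (1) of the proposition on $\beta_V$, and in the last step of the functoriality proof, $[b_+]\grotimes_{\ca{S}_\vep}[(i^f)_!]$ is represented by the field over $M$
$$\bigl\{Cl_\tau(N),1,C^N_{f(x)}/\vep\bigr\}_{x\in M}.$$
Pairing with $[\ev_0]$ kills the $t$-direction. By the homotopy argument of Proposition \ref{b times delta}, which shrinks $C_0(U_\vep)$ to all of $Cl_\tau(N)$, we then obtain the pullback along $f$ of Kasparov's local Bott element $\Theta_N\in\ca{R}KK(N;C(N),Cl_\tau(N)\grotimes C(N))$. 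This pullback is the element of $KK(Cl_\tau(M),Cl_\tau(N)\grotimes Cl_\tau(M))$ obtained from the graph embedding $i^f:M\to N\times M$. It is the Thom class of the normal bundle $\nu$ of $M\hookrightarrow N\times M$ (whose fibre at $x$ is $T_{f(x)}N$) pushed forward by extension by zero, i.e.\ the $KK$-realization of $(i^f)^{cl}_!$.

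It then remains to match this with the classical recipe. The classical $f^{cl}_!$ factors as $(i^f)^{cl}_!$ (Thom isomorphism for $\nu$ followed by extension by zero into $N\times M$) followed by $\varpi_!$ for the projection $\varpi:N\times M\to N$. Here $\varpi_!$ is the fibre index, given by the external product with $[d_M]$ on the $Cl_\tau(M)$ factor, and this is exactly the final $\grotimes[d_M]$. For the first factor, I would check that Kasparov product with the pulled-back local Bott element sends $[E]$ to the Thom–extension class $(i^f)^{cl}_![E]$, where $[E]$ is viewed in $K_*(Cl_\tau(M))$ with grading shift $p$ absorbed by the Clifford structure. One way is to deform $f$ to the smooth map and use the tubular neighborhood of the graph. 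There the Clifford operator $C^N_{f(x)}(n)=\vect{f(x)n}$ is, up to a homotopy through nondegenerate-off-a-compact operators, the fibrewise Bott operator of $\nu$. By \cite{Kas88}, the fibrewise Bott element on a vector bundle implements the Thom isomorphism, compatibly with the Connes–Skandalis/Emerson–Meyer description \cite{CS,EM} of $f^{cl}_!$ as a $KK$-product.

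I expect the main obstacle to be this identification of the pulled-back local Bott element with the Thom element of the graph's normal bundle, and its compatibility with the Atiyah–Singer and \cite{Fur} sign and grading conventions, in particular the Clifford algebra with negative-definite form and the graded right multiplication in $[d_M]$. I would first handle it locally over a chart where $\nu$ is trivial, using the $\ca{R}KK$-field description. I would then globalize via the $C_0(M)$-linearity of all elements involved, and fix signs by checking the case $M=\mathrm{pt}$, where the statement reduces to $[b_+]\grotimes[\beta_{T_yN}]\grotimes[\ev_0]=[b_{T_yN}]$ pushed into $N$. This case is part (1) of the $\beta_V$ proposition.
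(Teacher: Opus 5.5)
Your proposal is correct and follows essentially the paper's route: the paper also uses that $-\grotimes[d_M]$ is the fibre index to reduce to the graph-embedding part, computes the remaining product as the field $\bbra{Cl_\tau(U_\vep(f(m)))\grotimes E_m,1,\frac{C^N_{f(m)}}{\vep}\grotimes 1}_{m\in M}$, and identifies $\bbra{Cl_\tau(U_\vep(f(m))),1,\frac{C^N_{f(m)}}{\vep}}_{m\in M}$ with the extension by zero of the Thom class of the normal bundle of $i^f$; the only differences are that you pair with $[E]$ last rather than first, and that you make the sign and convention checks explicit. One step to drop is the appeal to the enlarging homotopy of Proposition \ref{b times delta}: on a closed $N$ the operator $C^N_{f(x)}$ exists only inside the injectivity radius, and no enlargement is needed anyway, since the restricted module $C_0(U_\vep(f(x)))Cl_\tau(N)$ with operator $C^N_{f(x)}/\vep$ already is the extended-by-zero Thom class, which is exactly how the paper concludes.
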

\begin{proof}
Since $[f_!]=[(i^f)_!]\grotimes [d_M]$
and the map $u\mapsto u\grotimes [d_M]$ corresponds to the fiber index map, it suffices to prove that 
$\bra{[b_+]\grotimes_{\bb{C}} 
[E]}\grotimes [(i^f)_!]\grotimes_{\ca{S}_\vep} 
[\ev_0]
=(i^f)_!^{cl}([E])$.

Let us compute the left-hand side.
$[(i^f)_!]\grotimes_{\ca{S}_\vep} 
[\ev_0]$ is represented by
$\bra{Cl_\tau({N})\grotimes Cl_\tau(M),\ev_0\circ (i^f)_!,0}$,
and ${[b_+]\grotimes_{\bb{C}} 
[E]}$ is represented by
$\bra{\ca{S}_\vep\grotimes C(M,E),1,\frac{X}{\vep}\grotimes 1}$.
The Kasparov product of them is represented by, in the language of fields of Kasparov modules,
$$\bbra{Cl_\tau(U_\vep(f(m)))\grotimes E_m,1,
\frac{C_{f(m)}^{N}}{\vep}\grotimes 1}_{m\in M},$$
where $U_\vep(f(m))$ is the $\vep$-neighborhood of 
$f(m)$ in ${N}$.

The right-hand side is the tensor product of $[p_2^*(E)]\in K_p(C({N})\grotimes Cl_\tau( M))$ and the extension by zero of the Thom class of the normal bundle of the embedding $i^f:M\to {N}\times M$. 
Since the Thom class is represented by the field of Kasparov modules
$\bbra{\bra{Cl_\tau(U_\vep(f(m))),1,\frac{C^{{N}}_{f(m)}}{\vep}\grotimes 1}_{m\in M}}$, the above Kasparov product equals the right-hand side.
\end{proof}

\subsection{A Poincar\'e duality homomorphism for Hilbert manifolds}

In this subsection, we define a correspondence assigning an element of the operator $K$-theory group to each geometric $K$-homology cycle, and we prove that this correspondence induces a homomorphism from the geometric $K$-homology group of a Hilbert manifold $\ca{M}$.

We briefly review geometric $K$-homology. For simplicity, we do not deal with relative $K$-homology.
See \cite{BD,Jak,BHS} for details.

\begin{dfn}\label{Def of K-homology}
For a topological space $\ca{X}$, a
$K$-homology cycle for $\ca{X}$ is a triple $(M,E,f)$ consisting of:
\begin{itemize}
\item A closed manifold without boundary $M$ equipped with a $Spin^c$-structure.
\item A smooth Hermitian vector bundle $E$ over $M$.
\item A continuous map $f:M\to \ca{X}$.
\end{itemize}
\end{dfn}

The sum of two $K$-homology cycles is defined by the disjoint union: $(M_1,E_1,f_1)+(M_2,E_2,f_2):=(M_1\cup M_2,E_1\cup E_2,f_1\cup f_2)$.
Geometric $K$-homology group $K_*^{geo}(\ca{X})$ is the set of equivalence classes of $K$-homology cycles for $\ca{X}$.
The equivalence relation is generated as follows.
For details, consult \cite{BHS}.

\begin{dfn}[\cite{Jak,BHS}]\label{Equiv rel for K-hom}
$(1)$ For a triple $(M,E_1,f)$ and $(M,E_2,f)$, we have $(M,E_1,f)+(M,E_2,f)\sim(M,E_1\oplus E_2,f)$.

$(2)$ If $M$ is the boundary of a $Spin^c$-manifold $L$, and the $Spin^c$-structure on $M$ is induced from $L$, and $E$ and $f$ extend to $L$, we have $(M,E,f)\sim 0$. This relation is called the {\bf bordism} and such a $K$-homology cycle is said to be null-bordant.

$(3)$ Let $(M,E,f)$ be a $K$-homology cycle.
Let $W$ be an even rank $Spin^c$-vector bundle over $M$.
Consider the total space $Z$ of the unit sphere bundle of $W\oplus \bm{1}$, and the dual $F$ of the even-graded part of the reduced spinor bundle for the vertical tangent bundle over $Z$.
Let $\varpi:Z\to M$ be the natural projection.
Then, we have $(M,E,f)\sim (Z,\varpi^*E\grotimes F,f\circ \varpi)$.
This relation is called the {\bf bundle modification}.
\end{dfn}

The additive inverse of $(M,E,f)$ is given by the same triple equipped with the opposite $Spin^c$-structure, by the bordism relation.

The geometric $K$-homology is $\bb{Z}_2$-graded by the parity of the dimension of $M$.

\begin{dfn}
The group of equivalence classes of $K$-homology cycles for $\ca{X}$ is denoted by $K^{geo}(\ca{X})$.
$K^{geo}_{\rm ev}(\ca{X})$ and $K^{geo}_{\rm odd}(\ca{X})$ are the subgroups of $K^{geo}(\ca{X})$ consisting of equivalence classes of $K$-homology cycles $(M,E,f)$ for which every connected component of $M$ is even-dimensional and odd-dimensional, respectively.
\end{dfn}

We define an assignment that associates to each geometric $K$-homology cycle for $\ca{M}$ an element of $KK^*(\bb{C},\ca{A(M)})$. We will show that, after passing to the $\bb{Z}_2$-grading via formal Bott periodicity, this induces a homomorphism
$$K^{geo}_{\ev}(\ca{M})\to K^{\ev}(\bb{C},\ca{A(M)})\text{ and }
K^{geo}_{\odd}(\ca{M})\to K^{\odd}(\bb{C},\ca{A(M)}).$$
This assignment is analogous to the Poincar\'e duality homomorphism $H_*(X)\to H^{\dim(X)-*}_c(X;o_X)$ for a non-compact manifold with the orientation sheaf $o_X$.

We now make the following definition.

\begin{dfn}
$(1)$ Let $(M,E,f)$ be a $K$-homology cycle for $\ca{M}$.
Assume that $M$ is connected, and let $m:=\dim (M)$.
Let $S_M$ be the $m$-graded spinor bundle over $M$.
We define a corresponding $K$-theory element $\Phi(M,E,f)\in KK(\Cl(\bb{R}^m),\ca{A(M)})
=KK^m(\bb{C},\ca{A(M)})$ by
$$\Phi(M,E,f):=
\bra{[b_+]\grotimes_{\bb{C}} [E\grotimes S_M]}\grotimes[f_!]
=
\bra{[b_+]\grotimes_{\bb{C}} [E\grotimes S_M]}\grotimes
[(i^f)_!]\grotimes
[d_M]\in KK^m(\bb{C},\ca{A(M)}).$$

$(2)$ Let $(M_i,E_i,f_i)$ be $K$-homology cycles for $\ca{M}$ with connected $M_i$'s.
For $(M,E,f)=\sum_i(M_i,E_i,f_i)$, we define
$$\Phi(M,E,f)=\sum_i\Phi(M_i,E_i,f_i).$$
\end{dfn}

In order to prove that the above assignment descends to a homomorphism after passing to the $\bb{Z}_2$-grading, it suffices to deal with three relations: addition, bordism and bundle modification.

\begin{pro}
$\Phi(M,E_1\oplus E_2,f)=\Phi(M,E_1,f)+\Phi(M,E_2,f)$.
\end{pro}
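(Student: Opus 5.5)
The plan is to reduce additivity in $E$ to two formal properties of the Kasparov product: it is bilinear, and direct sums of Kasparov modules represent sums in $KK$. We may assume $M$ is connected; for disconnected $M$ the definition of $\Phi$ is componentwise, and both sides split as sums over components. By definition,
$$\Phi(M,E,f)=\bra{[b_+]\grotimes_{\bb{C}}[E\grotimes S_M]}\grotimes[(i^f)_!]\grotimes[d_M],$$
and only the first factor depends on $E$. So it suffices to show
$$[(E_1\oplus E_2)\grotimes S_M]=[E_1\grotimes S_M]+[E_2\grotimes S_M]$$
in $KK(\Cl(\bb{R}^m),Cl_\tau(M))$, or in whatever group the class $[E\grotimes S_M]$ lives in; here it is given by the Hilbert module $C(M,E\grotimes S_M)$ with the left action of the multigrading and zero operator.

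First, I will note that tensor product distributes over direct sum, $(E_1\oplus E_2)\grotimes S_M\cong (E_1\grotimes S_M)\oplus(E_2\grotimes S_M)$. This isomorphism of Hermitian bundles preserves the $\bb{Z}_2$-grading, the $m$-multigrading (which acts only on the $S_M$ factor) and the right $\Cl(TM)$-action. Taking continuous sections, we get a unitary isomorphism of Kasparov modules
$$\bra{C(M,(E_1\oplus E_2)\grotimes S_M),\gamma,0}\cong\bra{C(M,E_1\grotimes S_M),\gamma,0}\oplus\bra{C(M,E_2\grotimes S_M),\gamma,0}.$$
Since addition in $KK$ is defined by direct sum of modules, this gives the displayed identity.

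Next, I will use that the external product with $[b_+]$ and the subsequent products with $[(i^f)_!]$ and $[d_M]$ are all group homomorphisms, because the Kasparov product is bilinear. This yields $\Phi(M,E_1\oplus E_2,f)=\Phi(M,E_1,f)+\Phi(M,E_2,f)$ in $KK^m(\bb{C},\ca{A(M)})$, and hence also after passing to $KK^{\ev}$ or $KK^{\odd}$.

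There is no substantial obstacle. The only point needing care is to check that the decomposition is compatible with all the structure: the grading on $E\grotimes S_M$, where $E$ is ungraded (trivially graded), and the right Hilbert $Cl_\tau(M)$-module inner product, which must be the orthogonal sum. Both are immediate because the direct sum $E_1\oplus E_2$ is taken orthogonally.
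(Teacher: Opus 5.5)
Your proposal is correct and takes essentially the same approach as the paper, whose proof is the one-line remark that the identity is clear from the definition of addition in $KK$. Your orthogonal decomposition $C(M,(E_1\oplus E_2)\grotimes S_M)\cong C(M,E_1\grotimes S_M)\oplus C(M,E_2\grotimes S_M)$, together with bilinearity of the Kasparov products with $[b_+]$, $[(i^f)_!]$ and $[d_M]$, is exactly the detail that remark leaves implicit.
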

\begin{proof}
This is clear from the definition of addition in $KK(\bb{C},\ca{A}(M))$.
\end{proof}

\begin{pro}
If $(M,E,f)$ is a null-bordant $K$-homology cycle for $\ca{M}$, we have $\Phi(M,E,f)=0$.
\end{pro}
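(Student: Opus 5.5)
Suppose $M=\partial L$ for a compact $Spin^c$-manifold $L$ of dimension $m+1$, and suppose $E$ and $f$ extend to $\bar E$ and $\bar f:L\to\ca{M}$. The idea is to reduce the statement to the classical fact that the index of a Dirac operator on a boundary vanishes, i.e.\ to the boundary map in $KK$-theory. We do this fibrewise over $\ca{A(M)}$.

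\textbf{Step 1: relocate the cycle.} First we replace $\bar f$ by a homotopic map that is smooth, or at least uniformly continuous, near the collar; by the homotopy invariance of $[f_!]$ this does not change $\Phi(M,E,f)$. Then we form the interior $\mathring L$ with a cylindrical end $M\times(0,1]$ and extend $\bar f$ and $\bar E$ to it. The construction of $(i^{\bar f})_!$ works verbatim for $\mathring L$, which need not be complete, since only local uniform continuity was used. This produces a $*$-homomorphism
$$(i^{\bar f})_!:\ca{A}(\mathring L)\to\ca{A(M)}\grotimes Cl_\tau(\mathring L),$$
and hence an element
$$x:=\bra{[b_+]\grotimes[\bar E\grotimes S_L]}\grotimes[(i^{\bar f})_!]\in KK^{m+1}(\bb{C},\ca{A(M)}\grotimes Cl_\tau(\mathring L)).$$

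\textbf{Step 2: boundary compatibility.} We use the half-open collar $U=M\times[0,1)$ and the extension
$$0\to Cl_\tau(\mathring L)\to Cl_\tau(L)\to Cl_\tau(M)\grotimes\Cl(\bb{R})\to 0.$$
Tensoring with $\ca{A(M)}$ keeps it exact, because $Cl_\tau$ is nuclear. Write $\partial$ for its boundary element. The key compatibility to prove is
$$[d_{\mathring L}]\;\text{composed with}\;\partial=[d_M]$$
up to sign and the Clifford shift; in other words, the Dirac class of the boundary is the image of the Dirac class of the interior under $\partial$. This is the classical Baum--Douglas--Taylor ``boundary of Dirac is Dirac'' result. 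We need it with the convention of Definition-Proposition [BHS 3.5] for the restricted spinor bundle, i.e.\ the $+1$ eigenspace of $X$.

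\textbf{Step 3: naturality.} We show that the class $x$, restricted to the collar and pushed through the boundary map, equals $\bra{[b_+]\grotimes[E\grotimes S_M]}\grotimes[(i^f)_!]$ tensored with the Clifford generator. This is a naturality statement: the homomorphism $(i^{\bar f})_!$ is compatible with restriction to $\partial L$. The point is that, by definition, $(i^{\bar f})_!(g\grotimes\phi)$ restricted to $x\in M$ uses the same local Bott homomorphism $\beta_{f(x)}$.

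\textbf{Conclusion.} Combining Steps 2 and 3 with associativity, $\Phi(M,E,f)$ equals the class obtained from $x$ by composing with $\partial$ and then with $[d_M]$, which by Step 2 is the class of $x$ composed with the Dirac element $[d_{\mathring L}]$ on the interior. The cycle coming from the interior defines a class on all of $L$, the closed manifold with boundary, i.e.\ it lifts to the middle term of the extension. Equivalently, the boundary of this class factors through the composition of two consecutive maps in the six-term exact sequence, which is zero. Hence $\Phi(M,E,f)=0$.

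\textbf{Main obstacle.} The main difficulty is Step 2 together with the lifting: we must justify that $[d_{\mathring L}]$ relates the two sides compatibly. A more robust alternative is the bordism formulation of Hilsum: consider the Kasparov module over $\ca{A(M)}$ built from the Dirac operator on $L$ with the $\ca{A(M)}$-valued coefficient bundle given by $(i^{\bar f})_!$. Its boundary restriction is the module representing $\Phi(M,E,f)$, and a cobordism of unbounded modules implies the class vanishes. The technical point there is to verify Hilsum's bounded commutator conditions, using the local uniform continuity estimates proved for $(i^f)_!$. I would try this bordism route first.
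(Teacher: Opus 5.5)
Your three ingredients match the paper's: restriction to $\partial L$ followed by the connecting map is zero, ``boundary of Dirac is Dirac'', and naturality of $(i^f)_!$ under restriction. But Step 1 and the Conclusion, as written, do not give a proof.

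\textbf{The gap.} First, the class $x$ is not defined. Over the non-compact interior, $(\ca{S}_\vep\grotimes C_0(\mathring L,\bar E\grotimes S_L),1,\frac{X}{\vep}\grotimes 1)$ is not a Kasparov module. The operator $(1-X^2/\vep^2)\grotimes 1$ is not compact, because the identity of $C_0(\mathring L,\bar E\grotimes S_L)$ is not compact. A vector bundle on $\mathring L$ gives no class in the $K$-theory of the ideal. In the paper, the only class living on $\mathring L$ is $\delta^*[E]\in KK^1(\bb{C},C_0(\mathring L))$, which is supported in the collar.

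Second, the Conclusion is not a valid chain of identities:
\begin{itemize}
\item $\partial$ goes from the quotient $\ca{A(M)}\grotimes Cl_\tau(M)\grotimes\Cl(\bb{R})$ to the ideal, so it cannot be applied to a class on $Cl_\tau(\mathring L)$.
\item The asserted identity $\Phi(M,E,f)=x\grotimes[d_{\mathring L}]$ has the wrong parity: the right side lies in $KK^{m+1}(\bb{C},\ca{A(M)})$, the left in $KK^{m}(\bb{C},\ca{A(M)})$.
\item If $x$ lived on the ideal and were ``lifted'' to the middle term by inclusion, its restriction to $\partial L$ would be zero. Step 3 would then force $y:=([b_+]\grotimes[E\grotimes S_M])\grotimes[(i^f)_!]$ to vanish. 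That is false in general: for $f$ constant, $y$ is a local Bott class tensored with $[S_M]$. The vanishing only occurs after pairing with $[d_M]$.
\end{itemize}

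\textbf{The repair.} Put the class on the compact manifold with boundary:
$\tilde x:=([b_+]\grotimes[\bar E\grotimes S_L])\grotimes[(i^{\bar f})_!]\in KK^{m+1}(\bb{C},\ca{A(M)}\grotimes Cl_\tau(L))$, with $Cl_\tau(L)=C(L,\Cl(TL))$ the middle term. Since $L$ is compact, $(i^{\bar f})_!$ is unproblematic and no homotopy of $\bar f$ is needed.
\begin{itemize}
\item Step 3 becomes $r_*\tilde x=y\grotimes_{\bb{C}}[\bm{1}_{\Cl(\bb{R})}]$ up to sign, using $S_L|_M\cong S_M\grotimes\Cl(\bb{R})$ in the convention of [BHS, 3.5].
\item Step 2 then gives $[\Delta^2]\grotimes\Phi(M,E,f)=\pm(r_*\tilde x\grotimes\partial)\grotimes[d_{\mathring L}]$.
\item Finally, $r_*\tilde x\grotimes\partial=0$ by exactness; the extension tensored with $\ca{A(M)}$ is still semisplit.
\end{itemize}

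\textbf{Comparison with the paper.} Corrected, this is the paper's argument with exactness applied at the level of $\ca{A(M)}\grotimes Cl_\tau(-)$, the third row of the paper's diagram. The paper instead applies it to $C(L)\to C(M)\to C_0(\mathring L)$ in the first row.
\begin{itemize}
\item Your route needs only the strict compatibility of $(i^{\bar f})_!$ and $S_L$ with restriction to $\partial L$. The paper must check squares (1)--(4) along the collar, including the homotopy from $\wt f$ to $\wt f'$.
\item In exchange, the paper never identifies $\delta^*$ with an abstract boundary map. It proves $\delta^*\circ i^*=0$ by an explicit operator homotopy.
\item The paper also proves the collar form of your Step 2 directly (square (5)), via $[\iota_*]\grotimes[d_{\mathring L}]=[d_M]\grotimes[d_{\bb{R}}|_{\mathring I}]$ and the harmonic-oscillator computation $[\tau_{\bb{R}}]\grotimes[S_{\bb{R}}]\grotimes[d_{\bb{R}}]=[\Delta^2]$. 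You leave this step as a citation.
\end{itemize}

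On the Hilsum route you prefer: local uniform continuity does not give bounded commutators. You would need Lipschitz dependence of $x\mapsto\beta_{\bar f(x)}(g)$, hence a smoothing of $\bar f$, and an identification of the boundary module with $\Phi$. None of this is sketched.
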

\begin{proof}
Let $m:=\dim(M)$.
Let $L$ be a $Spin^c$-manifold such that $\partial L=M$, and both  $E$ and $f$ extend to $L$. Let $\wt{f}:L\to \ca{M}$ and $\wt{E}$ be the corresponding extensions, and let $\mathring{L}$ be the interior of $L$.
Let $I=[-1,1]$, and let $\mathring{I}=(-1,1)$ denote its interior.
By the collar neighborhood theorem, we identify $\mathring{L}$ with $L\sqcup \bra{M\times [-1,1)}/(L\sim M\times\{-1\})$.
Let $\iota:M\times \mathring{I}\hookrightarrow \mathring{L}$ be the natural inclusion.
We define a homomorphism $\delta^*:KK(\bb{C},C(M))\to KK(\Cl(\bb{R}),C_0(\mathring{L}))$ as follows.
Let $[\tau_{\mathring{I}}]$ be the Bott element associated to $\mathring{I}$:
$$\bra{C_0(\mathring{I},\Cl(\bb{R})),
\gamma,
\sqrt{-1}
xe_1^*}\in KK^1(\bb{C},C_0(\mathring{I}))
=KK(\Cl(\bb{R}),C_0(\mathring{I})),$$
where $\gamma$ is the left multiplication of $\Cl(\bb{R})$, $x$ is the coordinate of $\mathring{I}$, and $e_1^*$ is the graded right multiplication given by $e_1^*w=(-1)^{|w|}we_1$.
Let $\iota_*:C_0(M\times \mathring{I})\to C_0(\mathring{L})$ be the extension by zero.
Then, $\delta^*$ is defined by the composition
$$KK(\bb{C},C(M))\xrightarrow{-\grotimes_{\bb{C}}[\tau_{\mathring{I}}]}
KK(\Cl(\bb{R}),C_0(M\times \mathring{I}))\xrightarrow{\iota_*}
KK(\Cl(\bb{R}),C_0(\mathring{L})).$$
It plays the role of the connecting homomorphism. 
In fact, we have
$$\delta^*(i^*(\wt{E}))
=\iota_*(\wt{E}|_{M\times \mathring{I}}\grotimes [{\rm pr}_2^*\tau_{\mathring{I}}])
=\iota_*( [{\rm pr}_2^*\tau_{\mathring{I}}])\grotimes \wt{E},
$$
where ${\rm pr}_2$ is the projection onto the second factor.
We now prove that $\iota_*([{\rm pr}_2^*\tau_{\mathring{I}}])=0$ as follows.
Let 
$$\widetilde{x}(l):=
\begin{cases}
x({\rm pr}_2(l)) & (l\in M\times \mathring{I}) \\
-1 & (\text{otherwise}).
\end{cases}$$
This is clearly continuous.
Then, $\iota_*([{\rm pr}_2^*\tau_{\mathring{I}}])$ can be represented by
$$(C_0(\mathring{L},\Cl(\bb{R})),\gamma,\sqrt{-1}\widetilde{x}e_1^*)$$
Consider the operator homotopy
$$s\mapsto F_s:=(1-s)\sqrt{-1}\widetilde{x}e_1^*+s\sqrt{-1}e_1^*.$$
Since each $F_s$ is invertible near the boundary, the family $\{F_s\}_{0\leq s\leq 1}$ define a homotopy of Kasparov modules.
Since $F_1=\sqrt{-1}e_1^*$ is invertible on $\mathring{L}$, the corresponding $KK$-element is trivial.
Thus, we have $\delta^*i^*[\widetilde{E}]=0$.

Let $S_{\mathring{I}}=\mathring{I}\times \Cl(\bb{R})$ be the $1$-graded trivial spinor bundle on $\mathring{I}$.
It defines an element
$[S_{\mathring{I}}]\in KK(\Cl(\bb{R})\grotimes C_0(\mathring{I}),Cl_\tau(\mathring{I})).$

Let us consider the following diagram.

$$
\xymatrix{
KK^0(\bb{C},C(M))
 \ar^{-\grotimes [\tau_{\mathring{I}}]}[r] 
 \ar_{-\grotimes [b_+]\grotimes [S_M]}[d] 
 \ar@{}[rd]|{(1)}&
KK^1(\bb{C},C_0(M\times \mathring{I}))
 \ar^{-\grotimes[\iota_*]}[r] 
 \ar|{-\grotimes [b_+]\grotimes[S_L|_{M\times\mathring{I}}]}[d]
 \ar@{}[rd]|{(2)} &
KK^1(\bb{C},C_0(\mathring{L}))
 \ar^{-\grotimes [b_+]\grotimes[S_L]}[d] \\
KK^{m}(\bb{C},\ca{A}(M))
 \ar^{-\grotimes [\tau_{\mathring{I}}]\grotimes[S_{\mathring{I}}]}[r] 
 \ar_{-\grotimes[(i^f)_!]}[d]
 \ar@{}[rd]|{(3)} &
KK^{m+2}(\bb{C},\ca{A}(M\times \mathring{I}))
 \ar^{-\grotimes [\iota_*]}[r] 
 \ar|{-\grotimes[(i^{\wt{f}})_!]}[d]
 \ar@{}[rd]|{(4)} &
KK^{m+2}(\bb{C},\ca{A}(\mathring{L}))
 \ar^{-\grotimes[(i^{\wt{f}})_!]}[d] \\
KK^{m}(\bb{C},\ca{A(M)}\grotimes Cl_\tau(M))
 \ar^{-\grotimes [\tau_{\mathring{I}}]\grotimes[S_{\mathring{I}}]}[r] 
 \ar_{-\grotimes[d_M]}[d] 
 \ar@{}[rrd]|{(5)}&
KK^{m+2}(\bb{C},\ca{A(M)}\grotimes Cl_\tau(M\times \mathring{I})) 
 \ar^{-\grotimes[\iota_*]}[r] &
KK^{m+2}(\bb{C},\ca{A(M)}\grotimes Cl_\tau(\mathring{L}))  
 \ar^{-\grotimes[d_{\mathring{L}}]}[d] \\
KK^{m}(\bb{C},\ca{A(M)})
 \ar_{-\grotimes[\Delta^2]}[rr] & &
KK^{m+2}(\bb{C},\ca{A(M)}) 
}
$$

We prove that all squares commute.

$(1)$ It commutes because $S_L|_{M\times \mathring{I}}=\varpi^*S_M\grotimes S_{\mathring{I}}$, where $\varpi:M\times \mathring{I}\to M$ is the natural projection.

$(2)$ It clearly commutes.

$(3)$ We define $(i^{\widetilde{f}})_!$ as follows. Let $L'=L\cup_ML$, and $F:=\widetilde{f}\cup_M \widetilde{f}$.
Then, $(i^{\widetilde{f}})_!:=(i^{F})_!|_{\ca{A}(\mathring{L})}$.
Define $\widetilde{f}':M\times\mathring{I}\to\ca{M}$ by $\wt{f}'(m,t)=f(m)$.
Note that $\widetilde{f}'$ extends to $L'$, $\wt{f}|_{M\times\mathring{I}}$ is homotopic to $\wt{f}'$, and this homotopy extends to $L'$. 
Since the correspondence $f\mapsto [(i^f)_!]$ is homotopy invariant, we have
	$$[(i^{\wt{f}})_!]=[(i^{\wt{f}'})_!].$$
Then, under the identifications
$$\ca{A}(M\times \mathring{I})\cong \ca{A}(M)\grotimes Cl_\tau(\mathring{I})$$
and
$$\ca{A(M)}\grotimes Cl_\tau(M\times \mathring{I})
\cong
\ca{A(M)}\grotimes Cl_\tau(M)\grotimes Cl_\tau(\mathring{I}),$$
$(i^{\wt{f}'})_!$ corresponds to $(i^{f})_!\grotimes \id$ 
Therefore, the square commutes.

$(4)$ All arrows are induced by $*$-homomorphisms. This square commutes at the level of homomorphisms.

$(5)$ We first comment on the definition of $[d_{\mathring{L}}]$ since $\mathring{L}$ is not complete.
In order to define it, we consider $\widehat{L}=L\cup(M\times [1,\infty))$ the manifold with a cylindrical end.
It is complete and we can define the Dirac element $[d_{\widehat{L}}]$.
We define $[d_{\mathring{L}}]$ by the pullback of it by the extension by zero
$k_*:Cl_\tau(\mathring{L})\hookrightarrow Cl_\tau(\widehat{L})$. 
$$[\iota_*]\grotimes_{Cl_\tau(\mathring{L})}[d_{\mathring{L}}]
=[d_{M\times \bb{R}}|_{M\times \mathring{I}}]
=[d_M]\grotimes_{\bb{C}}[d_\bb{R}|_{ \mathring{I}}]
.$$
Therefore, 
$$[\tau_{\mathring{I}}]\grotimes[S_{\mathring{I}}]
\grotimes[\iota_*]
\grotimes 
[d_{\mathring{L}}]
=[\tau_{\mathring{I}}]\grotimes[S_{\mathring{I}}]
\grotimes([d_M]\grotimes_{\bb{C}}[d_{\bb{R}}|_{ \mathring{I}}])
=[d_M]\grotimes
\bbra{[\tau_{\bb{R}}]\grotimes[S_{\bb{R}}]
\grotimes[d_{\bb{R}}]}.
$$
Then, $[\tau_{\bb{R}}]\grotimes[S_{\bb{R}}]
\grotimes[d_{\bb{R}}]$ is represented by
$$
\bra{\Cl(\bb{R})\grotimes \Cl(\bb{R})\grotimes L^2(\bb{R}),
\gamma_1\grotimes \gamma_2\grotimes \pi,
\sqrt{-1}e_1^*\grotimes 1\grotimes x
+1\grotimes e_2^*\grotimes \frac{d}{dx}
},$$
where $e_2$ is the basis of the second $\bb{R}$.
Then, by a direct computations in $\Cl(\bb{R})\grotimes \Cl(\bb{R})$, the kernel of this operator is spanned by
$$(1+\sqrt{-1}e_1e_2)\grotimes e^{-\frac{x^2}{2}}, \text{ and }
(e_1+\sqrt{-1}e_2)\grotimes e^{-\frac{x^2}{2}}.$$
Since $\Cl(\bb{R})\grotimes \Cl(\bb{R})\cong \CL{2}$ acts on this kernel and the kernel is $2=2^{\frac{2}{2}}$-dimensional, it is a spinor module.
One can directly check that the grading of this representation is given by $\sqrt{-1}e_1e_2$, and hence 
$[\tau_{\bb{R}}]\grotimes[S_{\bb{R}}]
\grotimes[d_{\bb{R}}]$
is equivalent to the canonical generator $[\Delta^2]$.

Therefore, for a null-bordant $K$-homology cycle $(M,E,f)$,
we have 
$$[\Delta^2]\grotimes_{\bb{C}}\Phi(M,E,f)
=\delta^*([E])\grotimes [b_+]\grotimes [S_L]\grotimes [(i^{\widetilde{f}})_!]\grotimes [d_{\mathring{L}}].$$
Since $[E]=i^*[\widetilde{E}]$ and $\delta^*\circ i^*=0$, we obtain the result.
\end{proof}

\begin{pro}
Let $(M,E,f)$ be a $K$-homology cycle for $\ca{M}$, and let $W$ be a $Spin^c$-vector bundle of rank $2r$ over $M$.
Let $(Z,\varpi^*E\otimes F_W,f\circ \varpi)$ be the bundle modification described in Definition \ref{Equiv rel for K-hom} $(3)$. 
Then, we have $\Phi(Z,\varpi^*E\otimes F_W,f\circ \varpi)=[\Delta^2]^r\grotimes_{\bb{C}}\Phi(M,E,f)$.
\end{pro}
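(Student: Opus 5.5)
We plan to reduce the statement to the functoriality of the Gysin map together with a fiberwise index computation for the sphere bundle $\varpi:Z\to M$. The sphere bundle $Z$ is compact and $f$ is defined on a compact manifold, so we may take $f$ uniformly continuous. We will use the functoriality proposition for $Z\xrightarrow{\varpi}M\xrightarrow{f}\ca{M}$, in the form already established:
$$[b_+]\grotimes[\varpi_!]\grotimes_{\ca{A}(M)}[f_!]=[b_+]\grotimes[(f\circ\varpi)_!]$$
as maps out of $Cl_\tau(Z)$. Precomposing with $[\varpi^*E\grotimes F_W\grotimes S_Z]\in KK^{m+2r}(\bb{C},Cl_\tau(Z))$ gives
$$\Phi(Z,\varpi^*E\otimes F_W,f\circ\varpi)=\bigl([b_+]\grotimes[\varpi^*E\grotimes F_W\grotimes S_Z]\bigr)\grotimes[(i^\varpi)_!]\grotimes[d_Z]\grotimes_{\ca{A}(M)}[f_!].$$
The claim therefore reduces to the following finite-dimensional identity in $KK^{m+2r}(\bb{C},\ca{A}(M))$:
$$\bigl([b_+]\grotimes[\varpi^*E\grotimes F_W\grotimes S_Z]\bigr)\grotimes[\varpi_!]=[\Delta^2]^r\grotimes_{\bb{C}}\bigl([b_+]\grotimes[E\grotimes S_M]\bigr).$$
We then take the product with $[f_!]$. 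Here we also need that $[\Delta^2]$ commutes with everything, which holds because external products with $\Cl(\bb{R}^2)$-modules are central.

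For the finite-dimensional identity we use the consistency proposition. Since $\ca{A}(M)=\ca{S}_\vep\grotimes Cl_\tau(M)$ and the inclusion $\ca{S}_\delta\hookrightarrow\ca{S}_\vep$ is harmless, the left side equals $[b_+]\grotimes_{\bb{C}}\bigl(\varpi^{cl}_!([\varpi^*E\grotimes F_W\grotimes S_Z])\bigr)$. To justify this, note that $[b_+]$ is a right inverse of $[\ev_0]$ and $\ca{S}_\vep$ is contractible to evaluation at $0$ in the appropriate graded sense. Concretely, we redo the field-of-Kasparov-modules computation in that proposition, but without applying $[\ev_0]$. The outcome is the local Bott element along the graph of $\varpi$, followed by $[d_Z]$. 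Next we split $S_Z\cong\varpi^*S_M\grotimes S_{V}$, where $V$ is the vertical tangent bundle and $S_V$ its multigraded spinor bundle. The Dirac element of $Z$ then factors over $M$ as a family of vertical Dirac elements on the fibers $S^{2r}$, in $\ca{R}KK$ language, followed by $[d_M]$.

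The main step is the fiberwise index computation. On each fiber $S(W_x\oplus\bb{R})\cong S^{2r}$, the vertical Dirac operator is twisted by the dual $F$ of the even half of the reduced spinor bundle. We expect its index to be one-dimensional: it is the classical Baum--Douglas computation, in which the index of the $Spin^c$ Dirac operator on $S^{2r}$ twisted by $F$ is the Bott generator. Its multigraded version is the spinor module $(\Delta^2)^{\grotimes r}=[\Delta^2]^r$ of $\Cl(\bb{R}^{2r})$. As a family over $M$, the kernel forms a line bundle, and we will have to check that it is trivialized compatibly with the $Spin^c$ structure of $W$. We will carry this out as the analogue of the $[\tau_{\bb{R}}]\grotimes[S_{\bb{R}}]\grotimes[d_{\bb{R}}]=[\Delta^2]$ computation in the bordism proposition, by explicitly identifying the kernel via the Clifford-module structure. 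This gives $\varpi^{cl}_!([\varpi^*E\grotimes F_W\grotimes S_Z])=[\Delta^2]^r\grotimes[E\grotimes S_M]$.

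We expect the main obstacle to be this family index computation, in particular its compatibility with gradings and orientation conventions, which depend on our negative-definite Clifford convention and the choice of $\epsilon=\sqrt{-1}^n\gamma(e_1\cdots e_{2n})$. Our first approach will be to use Kasparov's $\ca{R}KK(M;\,\cdot\,,\cdot)$ framework. The family of kernels is a $\Cl(\bb{R}^{2r})$-module bundle of rank $2^r$ over $M$. After tensoring with $\varpi^*E$, the fiber index is a family supported in a single fiberwise finite-rank module, which we identify pointwise with $\Delta^{2r}\grotimes E_x$. If a sign mismatch appears, we will compensate by the choice of $F$ as the dual of the even part, which is exactly what the definition of bundle modification is designed to fix.
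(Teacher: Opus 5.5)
Your route differs from the paper's and can be made to work, but one step fails as you justify it. Write $y=[\varpi^*E\grotimes F_W\grotimes S_Z]$.

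\textbf{The failing step.} The reduction through the functoriality proposition for $Z\xrightarrow{\varpi}M\xrightarrow{f}\ca{M}$ is formally fine, since $M$ is compact and so $f$ is uniformly continuous. The lift past $[\ev_0]$ is not. $\ca{S}_\vep$ is not contractible to $\ev_0$ in any $KK$ sense. The graded extension $0\to C_0((0,\vep))\grotimes\Cl(\bb{R})\to\ca{S}_\vep\xrightarrow{\ev_0}\bb{C}\to 0$ gives $K_0(\ca{S}_\vep)\cong\bb{Z}^2$, and both $[b_+]$ and $[b_-]$ satisfy $[b_\pm]\grotimes[\ev_0]=1$. So $-\grotimes[\ev_0]$ is not injective. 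The consistency proposition, which is stated only after $\grotimes[\ev_0]$, therefore does not determine $([b_+]\grotimes_{\bb{C}}y)\grotimes[\varpi_!]$, and that is exactly the element you must multiply by $[f_!]$.

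\textbf{The fix.} The missing ingredient is the coproduct. Write $(i^\varpi)_!=(\id_{\ca{S}_\vep}\grotimes\beta^0_\varpi)\circ(\Delta\grotimes\id_{Cl_\tau(Z)})$, with $\beta^0_\varpi(g\grotimes\phi)(m,z)=g(C^M_{\varpi(z)}(m))\grotimes\phi(z)$. Proposition~\ref{b times delta} then gives $([b_+]\grotimes_{\bb{C}}y)\grotimes[\varpi_!]=[b_+]\grotimes_{\bb{C}}\bigl(y\grotimes[b_+]\grotimes[\beta^0_\varpi]\grotimes[d_Z]\bigr)$. This is the same device the paper uses inside its functoriality proof. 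Only after this does the inner factor become $\varpi^{cl}_!(y)$. You then evaluate it with the projection formula, the factorization of $[d_Z]$ through the vertical family and $[d_M]$, and Kasparov's identity that the local Bott element times $[d_M]$ is $1$.

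\textbf{Comparison with the paper.} With that repair, your argument is the classical ``functoriality plus $\varpi_!(F_W)=1$'' proof, pushing forward in $K$-theory. The paper never uses functoriality. It pulls back along $\varpi$ with $[\wt{\varpi}^*]\in KK^{2r}(Cl_\tau(M),Cl_\tau(Z))$, built from $S^{\fib}$. It then checks that $(i^f)_!$ intertwines $\wt{\varpi}^*$ with $(i^{f\circ\varpi})_!$, and that twisting by $F_W$ commutes with $(i^{f\circ\varpi})_!$; both are identifications of modules or fields. All the analysis sits in the $K$-homological identity $[\wt{\varpi}^*]\grotimes[\wt{F_W}]\grotimes[d_Z]=[\Delta^2]^r\grotimes_{\bb{C}}[d_M]$.

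The analytic core is the same in both routes: the family index of the $F_W$-twisted vertical Dirac operator is the trivial line bundle. This comes from BHS, Propositions 3.6, 3.11 and 4.12, via $S^{\fib}_{\red}\grotimes F_W\cong\Cl_{\frac{1}{2}}(T_{\fib}Z)$. Import it from there rather than planning to ``compensate'' a sign through $F$, which is fixed by the definition of bundle modification.

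Your version makes the structure transparent. It does so by stacking the functoriality proposition (with its $\ca{R}KK$ homotopy argument), the coproduct lift and the projection formula. The paper's diagram is more economical: its first four squares are formal, and it never needs to lift past $[\ev_0]$.
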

\begin{proof}
We take the same strategy as the previous proposition.
Let $m:=\dim(M)$.
We introduce two $KK$-elements.

First, let $S^{\fib}$ be the $2r$-multigraded spinor bundle for the vertical tangent bundle of $Z$ with the multigading $\gamma:\CL{2r}\to \End(S^{\fib})$.
We define $\wt{\varpi}^*:Cl_\tau(M)\to C(Z,Cl(\varpi^*TM))\hookrightarrow Cl_\tau(Z)$ as follows: for $\psi\in Cl_\tau(M)$, 
$$\wt{\varpi}^*(\psi)(z):=\psi(\varpi(z))\in \Cl(T_{\varpi(z)}M)\subseteq \Cl(T_zZ),$$
where $T_{\varpi(z)}M$ is contained in $T_zZ$ as the orthogonal complement of the vertical tangent space.
Then, by an abuse of notation, we define
$[\wt{\varpi}^*]\in KK^{2r}(Cl_\tau(M),Cl_\tau(Z))$ by 
$$\bra{
C(Z,Cl(\varpi^*TM)\grotimes S^{\fib}),\gamma\grotimes \varpi^*,0}\in 
KK( \CL{2r}\grotimes Cl_\tau(M),Cl_\tau(Z))
=KK^{2r}(Cl_\tau(M),Cl_\tau(Z))
.$$
As explained in the paragraph following Definition 4.11 of \cite{BHS}, $S^{\fib}$ is of the form $S^{\fib}_{red}\grotimes \Delta^{2r}$, where $S^{\fib}_{red}$ is the reduced spinor bundle of the vertical tangent bundle of $Z$.
Thus, $[\wt{\varpi}^*]$ is represented by the external tensor product of $[\Delta^2]^r$ and
$$\bra{
C(Z,Cl(\varpi^*TM)\grotimes S^{\fib}_{\red}),\varpi^*,0}\in 
KK^0(Cl_\tau(M),Cl_\tau(Z)).$$

Second, for the bundle $F_W$, we define $[\wt{F_W}]\in KK(Cl_\tau(Z),Cl_\tau(Z))$ by 
$(C(Z,F_W\grotimes \Cl(TZ)),\id\grotimes \pi,0)$, where $\pi$ is the representation of $Cl_\tau(Z)$ by left multiplication.

We now prove that the following diagram commutes.
Note that the composition of the leftmost vertical arrows is $[E]\mapsto \Phi(M,E,f)$, and the composition of the rightmost vertical arrows is $[E']\mapsto \Phi(Z,E',f\circ \varpi)$.

$$
\xymatrix{
KK^{0}(\bb{C},C(M))
 \ar^{-\grotimes[\varpi^*]}[r] 
 \ar_{-\grotimes[b_+]\grotimes [S_M]}[d] 
 \ar@{}[rd]|{(1)}
 &
KK^{0}(\bb{C},C(Z))
 \ar^{-\grotimes[F_W]}[r] 
 \ar|{-\grotimes [b_+]\grotimes[S_Z]}[d] 
 \ar@{}[rd]|{(2)}
 &
KK^{0}(\bb{C},C(Z))
 \ar^{-\grotimes[b_+]\grotimes [S_Z]}[d]
  \\
KK^{m}(\bb{C},\ca{A}(M))
 \ar^{-\grotimes[\wt{\varpi}^*]}[r] 
 \ar_{-\grotimes [(i^f)_!]}[d] 
 \ar@{}[rd]|{(3)}
 &
KK^{m+2r}(\bb{C},\ca{A}(Z))
 \ar^{-\grotimes[\wt{F_W}]}[r] 
 \ar|{-\grotimes [(i^{f\circ \varpi})_!]}[d] 
 \ar@{}[rd]|{(4)}
 &
KK^{m+2r}(\bb{C},\ca{A}(Z))
 \ar^{-\grotimes [(i^{f\circ \varpi})_!]}[d]
  \\
KK^{m}(\bb{C},\ca{A(M)}\grotimes Cl_\tau(M))
 \ar_{-\grotimes([\bm{1}_{\ca{A(M)}}]\grotimes_{\bb{C}} [\wt{\varpi}^*])}[r] 
 \ar_{-\grotimes [d_M]}[d] 
 \ar@{}[rrd]|{(5)}
 &
KK^{m+2r}(\bb{C},\ca{A(M)}\grotimes Cl_\tau(Z))
 \ar_{-\grotimes[\wt{F_W}]}[r] 
 &
KK^{m+2r}(\bb{C},\ca{A(M)}\grotimes Cl_\tau(Z))
 \ar^{-\grotimes [d_Z]}[d]
  \\
KK^{m}(\bb{C},\ca{A(M)})
 \ar_{[\Delta^2]^r}[rr]
 &&
KK^{m+2r}(\bb{C},\ca{A(M)})
}$$

$(1)$ By the definition on the $Spin^c$-structure on $Z$, we have $S_Z\cong \varpi^*S_M\grotimes S^{\fib}$, and the square commutes.

$(2)$ Immediate.

$(3)$ We compute the two Kasparov products separately.
The Hilbert module representing $[(i^f)_!]\grotimes ([\bm{1}_{\ca{A(M)}}]\grotimes_{\bb{C}} [\wt{\varpi}^*])$ is
$$(\ca{A(M)}\grotimes Cl_\tau(M))\grotimes_{Cl_\tau(M)}C(Z,Cl(\varpi^*TM)\grotimes S^{\fib})
\cong \ca{A(M)}\grotimes C(Z,Cl(\varpi^*TM)\grotimes S^{\fib}).$$
The isomorphism is given by 
$(a\grotimes b)\grotimes c\mapsto a\grotimes \wt{\varpi}^*(b)c$.
Thus, the action of $g\grotimes \phi\in \ca{A}(M)$ is given by  multiplication by the function 
\begin{align*}
\id\grotimes \wt{\varpi}^*((i^f)_!(g\grotimes \phi))(t,x,z)
&=g(t\grotimes \id+\id\grotimes \vect{f(\varpi(z))x})
\grotimes \phi(\varpi(z))\\
&=(i^{f\circ \varpi})_!\circ \wt{\varpi}^*(g\grotimes \phi)(t,x,z)
\end{align*}
for $t\in (-\vep,\vep)$, $x\in \ca{M}$, and $z\in Z$.
Hence the two Kasparov modules are isomorphic.
Therefore, we have $[(i^f)_!]\grotimes ([\bm{1}_{\ca{A(M)}}]\grotimes_{\bb{C}} [\wt{\varpi}^*])=
[\wt{\varpi}^*]\grotimes [(i^{f\circ \varpi})_!]$.

$(4)$ Using the language of fields of Kasparov modules, the $KK$-element
$[(i^{f\circ \varpi})_!]$ is represented by the field
$$\{\ca{A(M)}\grotimes \Cl(T_zZ),\beta_{f\circ \varpi(z)}\grotimes \gamma,0\}_{z\in Z}\in 
\ca{R}KK(Z;\ca{S}_\vep\grotimes Cl_\tau(Z),\ca{A(M)}\grotimes Cl_\tau(Z)),$$
where $\gamma$ denotes Clifford multiplication. 
On the other hand, $[\wt{F_W}]$ is represented by the field
$$\{F_W|_z\grotimes \Cl(T_zZ),\id\grotimes \gamma,0\}_{z\in Z}
\in \ca{R}KK(Z; Cl_\tau(Z), Cl_\tau(Z)).$$
Thus, 
$[(i^{f\circ \varpi})_!]\grotimes [\wt{F_W}]$ is represented by the field
$$\{\ca{A(M)}\grotimes F_W|_z\grotimes\Cl(T_zZ),
\beta_{f\circ \varpi(z)}\grotimes\id \grotimes \gamma,
0\}_{z\in Z}\in 
\ca{R}KK(Z;\ca{S}_\vep\grotimes Cl_\tau(Z),\ca{A(M)}\grotimes Cl_\tau(Z)).$$
Similarly, $[\wt{F_W}]\grotimes [(i^{f\circ \varpi})_!]$ is also represented by the same field of Kasparov modules.

$(5)$ $[\wt{F_W}]\grotimes [d_Z]$ is the $KK$-element represented by a Dirac operator $D$ on $Z$ twisted by $F_W$, equivalently, in the unbounded picture,
$$[\wt{F_W}]\grotimes [d_Z]
=\bra{L^2(Z,\Cl(TZ)\grotimes F_W),\pi,D}.$$
By $[\wt{\varpi}^*]=[\Delta^2]^r\grotimes_{\bb{C}}\bra{
C(Z,Cl(\varpi^*TM)\grotimes S^{\fib}_{\red}),\varpi^*,0}$,
we have
$$[\widetilde{\varpi}^*]\grotimes [\wt{F_W}]\grotimes [d_Z]
=[\Delta^2]^r\grotimes_{\bb{C}}
\bra{
L^2(Z,Cl(\varpi^*TM)\grotimes S^{\fib}_{\red}\grotimes F_W),
\pi\circ \varpi^*,D}\in KK^{2r}(Cl_\tau(M),\bb{C}).$$
By \cite[Proposition 4.12]{BHS},
$S^{\fib}_{\red}\grotimes F_W$ is isomorphic to
$\Cl_{\frac{1}{2}}(T_{\fib}Z)$ defined in
\cite[Definition 3.10]{BHS}.
Moreover, by \cite[Proposition 3.11]{BHS},
the corresponding family index is represented by the trivial line bundle.
Applying \cite[Proposition 3.6]{BHS}, we have 
$$[\widetilde{\varpi}^*]\grotimes [\wt{F_W}]\grotimes [d_Z]
=[\Delta^2]^r\grotimes_{\bb{C}}
\bra{
L^2(M,\Cl(TM)),\pi,D}
=[\Delta^2]^r\grotimes_{\bb{C}}[d_M]
\in KK^{2r}(Cl_\tau(M),\bb{C}).$$
Therefore, square $(5)$ commutes.
\end{proof}

Combining the preceding results, we obtain the following.

\begin{thm}
The correspondence $\Phi$ induces a homomorphism
$K_{\ev/\odd}^{geo}(\ca{M})\to KK^{\ev/\odd}(\bb{C},\ca{A(M)})$.
\end{thm}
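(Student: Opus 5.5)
The plan is to assemble the three preceding propositions (additivity, bordism invariance, bundle modification) into a well-defined map on equivalence classes. First, extend $\Phi$ additively to all cycles, including disconnected $M$, and compose with the formal Bott periodicity isomorphisms $KK^m(\bb{C},\ca{A(M)})\to KK^{\ev}(\bb{C},\ca{A(M)})$ or $KK^{\odd}(\bb{C},\ca{A(M)})$ according to the parity of $m$. Denote the result by $\overline{\Phi}$. Every connected component of $M$ has a fixed parity in the graded pieces, so this lands in a single group. By construction $\overline{\Phi}$ is additive with respect to disjoint union, since the direct limit maps are homomorphisms.

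Next I will check that $\overline{\Phi}$ is constant on each generating relation of Definition \ref{Equiv rel for K-hom}.

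For relation $(1)$, use the additivity proposition componentwise.

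For relation $(2)$, suppose $M=\partial L$. When $M$ is disconnected I first apply the bordism proposition to the whole boundary. Its proof never used connectedness of $M$: one simply works with $S_M$ componentwise and with the common dimension $m$. That proposition gives $[\Delta^2]\grotimes_{\bb{C}}\Phi(M,E,f)=0$, and since $[\Delta^2]\grotimes_{\bb{C}}-$ is the formal Bott isomorphism, $\overline{\Phi}(M,E,f)=0$.

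For relation $(3)$, the bundle modification proposition gives $\Phi(Z,\dots)=[\Delta^2]^r\grotimes_{\bb{C}}\Phi(M,E,f)$. These two elements define the same class in the direct limit, because $\dim Z=m+2r$ has the same parity as $m$ and $[\Delta^2]^r$ is exactly the connecting map of the limit.

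Since $\sim$ is the equivalence relation generated by these relations together with disjoint union, $\overline{\Phi}$ descends to the quotient. I also need to check compatibility with sums, i.e. that if $x\sim x'$ then $x+y\sim x'+y$ maps consistently; this is automatic from additivity. The additive inverse in $K^{geo}$ (the opposite $Spin^c$-structure) is then sent to the negative, because the cycle $(M,E,f)+(-M,E,f)$ bounds $M\times[0,1]$. Hence $\overline{\Phi}$ is a group homomorphism on each parity part.

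The main obstacle is bookkeeping in relation $(2)$: the boundary of $L$ may be disconnected while $\Phi$ was defined componentwise, and the bordism argument must be applied to the total boundary, not to each component. I will handle this by observing that all components of $\partial L$ have the same dimension $\dim L-1$, so the diagram in the bordism proposition makes sense verbatim for the whole (disconnected) boundary. I will also check that the grading conventions for $S_M$ restricted from $S_L$ agree with those used in the componentwise definition of $\Phi$; this is immediate from the Definition-Proposition on boundary restriction.
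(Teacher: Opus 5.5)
Your proposal is correct and is essentially the paper's argument. The paper's proof is only the sentence ``Combining the preceding results'': it invokes the additivity, bordism, and bundle modification propositions together with formal Bott periodicity, exactly as you do. Your extra bookkeeping is sound and makes explicit what the paper leaves implicit. This covers disconnected boundaries (if $L$ is not of pure dimension, first split it into connected components), compatibility of the generated relation with disjoint union, and the redundant but harmless check on inverses.
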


\subsection{A non-trivial $K$-theory element of $\ca{A(M)}$}\label{Our construction is non-trivial}

In this subsection, we prove, in certain special cases,
that the $K$-theory element constructed in the previous subsection is non-zero.
To this end, it suffices to prove that the Kasparov product of our $K$-theory element and a certain $KK$-element is non-zero.
We constructed a $K$-homology element for the following infinite-dimensional setting in \cite{T1}.
Let $T$ be the circle group, let $LT=L^2_1(S^1,T)$ be its loop group, let $(LT)_0$ denote the identity component, and let $U=(LT)_0/T$. Let $[d_U]\in KK(\ca{A}(U),\ca{S}_\vep)$ be the $KK$-element defined in \cite[Section 5.1]{T1}, where it is called the index element and is an element of the 
$U$-equivariant $KK$-group. In this paper, we forget the group action.
Let $X$ be a closed Riemannian manifold, and let $\ca{M}=U\times X$. 
In Definition \ref{pullback along dp}, we have defined $\iota_x^*:\ca{A}(U\times X)\to \ca{A}(U)\grotimes \Cl(T_xX)$ for $x\in X$.
Note that $KK^{0}(\Cl(T_xX),\Cl(T_xX))\cong \bb{Z}$. 
Let $[\bm{1}_{\Cl(T_xM)}]$ denote its generator.
Then we obtain
$$[\iota_x^*]\grotimes([d_U]\grotimes[\bm{1}_{\Cl(T_xX)}])\in
KK^{0}(\ca{A}(U\times X),\ca{S}_\vep\grotimes \Cl(T_xX)).$$
We now consider the following problem.
For simplicity, we assume $\dim(M)=\dim(X)$.

\begin{prob}\label{nontriv prob}
For a $K$-homology cycle $(M,E,f)\in K^{geo}_{\dim(M)}(\ca{M})$ such that $\dim(M)=\dim(X)$, compute 
$$\Phi(M,E,f)\grotimes [\iota_x^*]\grotimes
([d_U]\grotimes[\bm{1}_{\Cl(T_xX)}])\grotimes
[\ev_0]
\in KK^{\dim(M)}(\bb{C},\Cl(T_xX))\cong \bb{Z}.$$
\end{prob}

Intuitively, $[\iota_x^*]\grotimes([d_U]\grotimes[\bm{1}_{\Cl(T_xX)}])$ is the $K$-homology element supported on $U\times \{ x\}$ represented by the Dirac operator on $U\times \{ x\}$, while
$\Phi(M,E,f)$ plays the role of the Poincar\'e dual of the geometric cycle $(M,E,f)$ and is supported on $f(M)$.
Thus, one expects the Kasparov product to localize near the intersection of $f(M)$ and $U\times\{x\}$.

We divide the computation into several steps.
Using associativity and, where applicable, commutativity of the Kasparov product,
\begin{align*}
&\Phi(M,E,f)\grotimes [\iota_x^*]\grotimes([d_U]\grotimes[\bm{1}_{\Cl(T_xM)}])\\
&\ \ \ =
\Bigl\{
([b_+]\grotimes [E\grotimes S_M])\grotimes_{\ca{A}(M)}
[(i^f)_!]\grotimes_{\ca{A(M)}\grotimes Cl_\tau(M)}
\Bigl([\bm{1}_{\ca{A(M)}}]\grotimes_{\bb{C}}[d_M]\Bigr)\Bigr\} \\
&\ \ \ \ \ \ \grotimes_{\ca{A(M)}}
[\iota_x^*]\grotimes_{\ca{A}(U)\grotimes \Cl(T_xX)}
([d_U]\grotimes_{\bb{C}}[\bm{1}_{\Cl(T_xX)}])\\
&\ \ \ =
([b_+]\grotimes [E\grotimes S_M])\grotimes_{\ca{A}(M)}
[(i^f)_!]\grotimes_{\ca{A(M)}\grotimes Cl_\tau(M)}
\Bigl([\iota_x^*]\grotimes_{\bb{C}}[\bm{1}_{Cl_\tau(M)}]\Bigr)\\
&\ \ \ \ \ \ \grotimes
\Bigl([\bm{1}_{\ca{A}(U)\grotimes \Cl(T_xX)}]\grotimes_{\bb{C}}[d_M]\Bigr)
\grotimes_{\ca{A}(U)\grotimes \Cl(T_xX)}
([d_U]\grotimes_{\bb{C}}[\bm{1}_{\Cl(T_xX)}])\\
&\ \ \ =
([b_+]\grotimes [E\grotimes S_M])\grotimes_{\ca{A}(M)}
[(i^f)_!]\grotimes_{\ca{A(M)}\grotimes Cl_\tau(M)}
\Bigl([\iota_x^*]\grotimes_{\bb{C}}[\bm{1}_{Cl_\tau(X)}]\Bigr ) \\
&\ \ \ \ \ \ \grotimes_{\ca{A}(U)\grotimes \Cl(T_xX)\grotimes Cl_\tau(M)}
\bbra{[d_U]\grotimes_{\bb{C}}[\bm{1}_{\Cl(T_xX)}]
\grotimes_{\bb{C}}[d_M]}
\end{align*}

We first deal with $[(i^f)_!]\grotimes_{\ca{A(M)}\grotimes Cl_\tau(M)}
\Bigl([\iota_x^*]\grotimes_{\bb{C}}[\bm{1}_{Cl_\tau(X)}]\Bigr)$.
We introduce the following maps.
Let $f=(f_1,f_2):M\to U\times X$.
Let $\beta_0:\ca{S}_\vep\to \ca{A}(U)$ be the map 
$$g\mapsto g(X\grotimes 1+1\grotimes C_0),$$
where $C_0$ is the Euler vector field at the origin of $U$.
This is the restriction of the Bott map in \cite{HKT} to $\ca{S}_\vep$.
Let $(j^{f_2})_!:\ca{A}(M)\to Cl_\tau(X\times M)$ be the map
$$(j^{f_2})_!(g\grotimes \phi)(y,m):=g(C_{f_2(m)}(y))\grotimes \phi(m).$$
Unlike $(i^f)_!$, this map takes values in $Cl_\tau(X\times M)$ instead of $\ca{A}(X\times M)$.

We prove $[(i^f)_!]\grotimes_{\ca{A(M)}\grotimes Cl_\tau(M)}
\Bigl([\iota_x^*]\grotimes_{\bb{C}}[\bm{1}_{Cl_\tau(X)}]\Bigr)$ is independent of $f_1$.
Recall that $\Delta:\ca{S}_\vep\to\ca{S}_\vep\grotimes \ca{S}_\vep$ is the coproduct: $\Delta(f):=f(X\grotimes 1+1\grotimes X)$.

\begin{lem}
We have
$$[(i^f)_!]\grotimes_{\ca{A(M)}\grotimes Cl_\tau(M)}
\Bigl([\iota_x^*]\grotimes_{\bb{C}}[\bm{1}_{Cl_\tau(M)}]\Bigr)
=\bra{[\Delta]\grotimes_{\bb{C}}[\bm{1}_{Cl_\tau(M)}]}\grotimes_{\ca{S}_\vep\grotimes \ca{A}(M)}\bra{ [\beta_0]\grotimes_{\bb{C}}[ \iota_x^*\circ (j^{f_2})_!]}$$
in $KK(\ca{A}(M),\ca{A}(U)\grotimes \Cl(T_xX)\grotimes Cl_\tau(M))$.
\end{lem}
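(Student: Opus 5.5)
Both sides of the claimed identity are Kasparov products in which every factor is a $*$-homomorphism (or the external product of one with an identity element). The plan is therefore to reduce the statement to a comparison of two explicit $*$-homomorphisms $\ca{A}(M)\to \ca{A}(U)\grotimes \Cl(T_xX)\grotimes Cl_\tau(M)$, and then to connect them by a homotopy of $*$-homomorphisms.

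\textbf{Left-hand side.} Since $[\bm{1}_{Cl_\tau(M)}]$ is the identity, the product $[(i^f)_!]\grotimes(\cdots)$ is represented by the composite $*$-homomorphism
$$\Psi_f:=(\iota_x^*\grotimes\id_{Cl_\tau(M)})\circ(i^f)_!.$$
For $g\grotimes\phi$ we write $\Delta(g)=\sum_i g_1^i\grotimes g_2^i$ and use the splitting $C^{U\times X}=C^U\grotimes 1+1\grotimes C^X$ from the discussion preceding Definition \ref{pullback along dp}. This gives
$$\Psi_f(g\grotimes\phi)(t,u,m)=\sum_i g_1^i\bigl(t\grotimes1+1\grotimes C^U_{f_1(m)}(u)\bigr)\grotimes g_2^i\bigl(C^X_{f_2(m)}(x)\bigr)\grotimes\phi(m).$$

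\textbf{Right-hand side.} Composing the homomorphisms in the same way, the right-hand side is represented by
$$\Psi':=(\beta_0\grotimes(\iota_x^*\circ(j^{f_2})_!))\circ(\Delta\grotimes\id_{Cl_\tau(M)}).$$
Evaluating on $g\grotimes\phi$ gives the same formula as $\Psi_f$, except that $C^U_{f_1(m)}$ is replaced by $C^U_0$. So the two sides differ only through the base point $f_1(m)$ versus $0$. Here I will check that $\iota_x^*\circ(j^{f_2})_!$ is evaluation at $y=x$; this is immediate from the definitions.

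\textbf{The homotopy.} The manifold $U=(LT)_0/T$ is a flat Hilbert space (an affine Hilbert manifold), so $C^U_p(u)=u-p$ and $\beta_p(g)$ makes sense for every $p$, with no injectivity-radius cutoff. I will therefore use the straight-line homotopy
$$f_1^s:=s f_1,\qquad s\in[0,1],$$
which joins $\Psi_0=\Psi'$ to $\Psi_1=\Psi_f$. Because $M$ is compact, $f_1(M)$ is bounded. To see that each $\Psi_s$ lands in $\ca{A}(U)\grotimes\Cl(T_xX)\grotimes Cl_\tau(M)$, I will rerun the argument of the Proposition showing that $(i^f)_!$ takes values in $\ca{A(M)}\grotimes Cl_\tau(M)$: a partition of unity on $M$ together with the estimate $\|\vect{p m}-\vect{q m}\|\le 2d(p,q)$.

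\textbf{Main obstacle.} The step I expect to be the real work is showing that $s\mapsto\Psi_s(a)$ is norm-continuous, uniformly enough to define a $*$-homomorphism into $C([0,1])\grotimes\bigl(\ca{A}(U)\grotimes\Cl(T_xX)\grotimes Cl_\tau(M)\bigr)$. I will handle it with the corollary that $x_0\mapsto\beta_{x_0}(g)$ is norm-continuous, applied with $x_0=sf_1(m)$, together with compactness of $M\times[0,1]$ to make the estimates uniform in $m$ and $s$. Homotopic $*$-homomorphisms define the same $KK$-class, which proves the equality.

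If one prefers not to use the flat structure of $U$, the alternative is the $\ca{R}KK(M;\cdot,\cdot)$ field picture from the functoriality proof. The field for $\Psi_f$ at $m$ is $\beta_{f_1(m)}\grotimes\mathrm{ev}_x\beta^0_{f_2(m)}$, and contractibility of $U$ gives a path of base points from $f_1(m)$ to $0$.
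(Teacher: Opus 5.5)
Your proposal is correct and follows essentially the paper's argument. The paper uses contractibility of $U$ and homotopy invariance of $f\mapsto[(i^f)_!]$ to replace $f$ by $(0,f_2)$ first. It then checks, via $C^{U\times X}=C^U\grotimes 1+1\grotimes C^X$ and the coproduct, that $(\iota_x^*\grotimes\id)\circ(i^{(0,f_2)})_!$ equals $(\beta_0\grotimes(\iota_x^*\circ(j^{f_2})_!))\circ(\Delta\grotimes\id)$ exactly. You do the same computation before homotoping, and you write the homotopy explicitly as $sf_1$ using the flat structure of $U$, which only spells out what the paper's homotopy-invariance proposition takes for granted.
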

\begin{proof}
Recall that the correspondence $f\mapsto [(i^f)_!]$ is homotopy invariant.
Since $U$ is contractible, $f=(f_1,f_2)$ is homotopic to $(0,f_2)$, where $0$ means the constant map $0(m)=0\in U$, and hence $[(i^f)_!]=[(i^{(0,f_2)})_!]$.
It follows directly from the definition that $[\iota_x^*]\grotimes_{\bb{C}}[\bm{1}_{Cl_\tau(M)}]
=[\iota_x^*\grotimes \id_{Cl_\tau(M)}]$.
Thus, 
$$[(i^f)_!]\grotimes_{\ca{A(M)}\grotimes Cl_\tau(M)}
\Bigl([\iota_x^*]\grotimes_{\bb{C}}[\bm{1}_{Cl_\tau(M)}]\Bigr)
=[(\iota_x^*\grotimes \id_{Cl_\tau(M)})\circ (i^{(0,f_2)})_!].$$

Let us compute the composition 
$(\iota_x^*\grotimes \id_{Cl_\tau(X)})\circ (i^{(0,f_2)})_!$ explicitly as a map.
For $g\in\ca{S}_\vep$, let $\Delta(g)=\sum_ig_1^i\grotimes g_2^i$.
For $\phi\in Cl_\tau(M)$, and $t\in (-\vep,\vep)$, $y\in X$ and $m\in M$,
\begin{align*}
(\iota_x^*\grotimes \id_{Cl_\tau(X)})\circ (i^{(0,f_2)})_!(g\grotimes \phi)(t,u,m)
&=(i^{(0,f_2)})_!(g\grotimes \phi)(t,u,x,m) \\
&=\beta_{(0,f_2(m))}(g)(t,u,x)\grotimes \phi(m) \\
&=g(t\grotimes 1+1\grotimes C_{(0,f_2(m))}(u,x))\grotimes \phi(m) \\
&=g(\bbra{t\grotimes 1+
1\grotimes C_{0}(u)}\grotimes 1+
(1\grotimes 1)\grotimes C_{f_2(m)}(x))\grotimes \phi(m) \\
&=\sum_ig_1^i(t\grotimes 1+1\grotimes C_{0}(u))\grotimes 
g_2^i(C_{f_2(m)}(x))\grotimes \phi(m)\\
&=\sum_i\beta_0(g_1^i)(t,u)\grotimes 
g_2^i(C_{f_2(m)}(x))\grotimes \phi(m)\\
&=\beta_0\grotimes (j^{f_2})_!(\Delta(g)\grotimes\phi)(t,u,x,m) \\
&=(\beta_0\grotimes (\iota_x^*\circ (j^{f_2})_!))\circ (\Delta\grotimes\id_{Cl_\tau(M)})
(g\grotimes\phi)(t,u,m).
\end{align*}

By $[(\beta_0\grotimes (\iota_x^*\circ (j^{f_2})_!))\circ (\Delta\grotimes\id_{Cl_\tau(M)})]
=[\Delta\grotimes\id_{Cl_\tau(M)}]\grotimes [\beta_0\grotimes (\iota_x^*\circ (j^{f_2})_!)]$, we obtain the result.
Therefore, the expression $[(i^f)_!]\grotimes_{\ca{A(M)}\grotimes Cl_\tau(M)}
\Bigl([\iota_x^*]\grotimes_{\bb{C}}[\bm{1}_{Cl_\tau(M)}]\Bigr)$ depends only on $f_2$, as claimed.
\end{proof}

\begin{rmk}
Note that $(\beta_0\grotimes( \iota_x^*\circ (j^{f_2})_!))\circ (\Delta\grotimes\id_{Cl_\tau(M)})
(g\grotimes\phi)(t,u,m)$ vanishes if $d(f_2(m),x)\geq \vep$.
In this sense, it is localized near the graph of $(0,f_2)$.
Since $f$ is homotopic to $(0,f_2)$, this may be viewed as localization near the graph of $f$.
This supports the interpretation that the Kasparov product in Problem \ref{nontriv prob} computes an intersection between the graph of $f$ and $U\times \{x\}$.
\end{rmk}


In \cite[Proposition 6.3]{T1}, we have proved that $[\beta_0]\grotimes[d_U]=[\bm{1}_{\ca{S}_\vep}]$.
Thus, our Kasparov product is simplified as follows:
{\small
\begin{align*}
&\bbra{[b_+]\grotimes_{\bb{C}} [E\grotimes S_M]}
\grotimes_{\ca{A}(M)}
[\Delta\grotimes \id]
\grotimes_{\ca{S}_\vep\grotimes\ca{A}(M)}
\bbbra{\beta_0\grotimes_{\bb{C}} (\iota_x^*\circ (j^{f_2})_!)}
\grotimes_{\ca{A}(U)\grotimes \Cl(T_xX)\grotimes Cl_\tau(M)}
\bbra{[d_U]\grotimes_{\bb{C}}[\bm{1}_{\Cl(T_xX)}]
\grotimes_{\bb{C}}[d_M]} \\
&\ \ \ =\bbra{([b_+]\grotimes_{\ca{S}_\vep}[\Delta])\grotimes_{\bb{C}} [E\grotimes S_M]}
\grotimes_{\ca{S}_\vep\grotimes\ca{A}(M)}
\bbra{[\beta_0]\grotimes_{\ca{A}(U)}[d_U]}\grotimes_{\bb{C}}
\bbra{[\iota_x^*\circ (j^{f_2})_!]\grotimes_{\bb{C}}\bra{[\bm{1}_{\Cl(T_xX)}]
\grotimes_{\bb{C}}[d_M]}}\\
&\ \ \ =\bbra{([b_+]\grotimes_{\bb{C}}[b_+])\grotimes_{\bb{C}} [E\grotimes S_M]}
\grotimes_{\ca{A}(M)}
\bbra{
[\iota_x^*\circ (j^{f_2})_!]\grotimes_{\Cl(T_xX)\grotimes Cl_\tau(M)}\bra{[\bm{1}_{\Cl(T_xX)}]
\grotimes_{\bb{C}}[d_M]}}\\
&\ \ \ =
[b_+]\grotimes_{\bb{C}}
 \bra{
  \bra{[b_+]\grotimes_{\bb{C}}[E\grotimes S_M]}
 \grotimes_{\ca{A}(M)}
 [\iota_x^*\circ (j^{f_2})_!]}
\grotimes_{\Cl(T_xX)\grotimes Cl_\tau(M)}
 \bra{[\bm{1}_{\Cl(T_xX)}]
 \grotimes_{\bb{C}}[d_M]}
\end{align*}}
in $KK^{m}(\bb{C},\ca{S}_\vep\grotimes \Cl(T_xX))$.

Let us compute 
$\bra{[b_+]\grotimes_{\bb{C}}
[E\grotimes S_M]}\grotimes_{\ca{S}_\vep\grotimes\ca{A}(M)}
[\iota_x^*\circ (j^{f_2})_!]$.
The following computation is analogous to the usual computation of an intersection number in topological intersection theory.
By Sard's theorem, after deforming $f$ by a homotopy, we may assume that $x$ is a regular value of $f_2$.
We may further assume that, for each 
$y\in f_2^{-1}(x)$, the restriction of $f_2$ to a small neighborhood of $y$ is a diffeomorphism by the inverse function theorem. After possibly deforming the Riemannian metric of $M$, we may assume that this neighborhood contains the $\vep$-neighborhood of $y$, that the $\vep$-neighborhoods of the points $y\in f_2^{-1}(x)$ are mutually disjoint, and that the restriction of $f_2$ to these neighborhoods is an isometry.
We denote the inverse image of the $\vep$-neighborhood of $x$ under $f_2$ by $U_\vep$.
Since $f_2^{-1}(x)=\{y_1,y_2,\ldots,y_N\}$ is discrete, we have 
$U_\vep=\coprod_i U_\vep(y_i)$,
where $U_\vep(y_i)$ is the $\vep$-neighborhood of $y_i$ in $M$.

\begin{lem}
Let $\iota_{y_i}:\{y_i\}\hookrightarrow M$ be the inclusion.
Then, $\bra{[b_+]\grotimes_{\bb{C}}
[E\grotimes S_M]}\grotimes_{\ca{A}(M)}
[\iota_x^*\circ (j^{f_2})_!]$ is given by
$$\sum_i{\rm sign}(f_2,y_i)[\bm{1}_{\Cl(T_xX)}]\grotimes\rank(E_{y_i})(\iota_{y_i})_![\bm{1}]
\in
KK^m(\bb{C},\Cl(T_xX)\grotimes Cl_\tau(M)),$$
where ${\rm sign}(f_2,y_i)$ is $1$ if $f_2$ preserves the orientation at $y_i$, and is $-1$ if $f_2$ reverses the orientation at $y_i$.
\end{lem}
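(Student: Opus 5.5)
The plan is to write the Kasparov product explicitly as a Kasparov module, use its support to localize to the neighbourhoods $U_\vep(y_i)$, and then identify each local piece with a finite-dimensional Bott element.

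\textbf{Step 1: the explicit module.} The element $[b_+]\grotimes_{\bb{C}}[E\grotimes S_M]$ is represented by
$$\bra{\ca{S}_\vep\grotimes C(M,E\grotimes S_M),1,\tfrac{X}{\vep}\grotimes 1}.$$
The element $[\iota_x^*\circ (j^{f_2})_!]$ comes from a $*$-homomorphism. Hence, exactly as in the proof of Proposition \ref{b times delta}, the product is represented by a module of the following form:
\begin{itemize}
\item the module is the closure of $\iota_x^*\circ(j^{f_2})_!(\ca{S}_\vep\grotimes C(M,E\grotimes S_M))$ inside $\Cl(T_xX)\grotimes C(M,E\grotimes S_M\grotimes\Cl(TM))$;
\item the operator is obtained by transporting the multiplier $X$.
\end{itemize}
Since $\iota_x^*\circ(j^{f_2})_!(g\grotimes\phi)(m)=g(C_{f_2(m)}(x))\grotimes\phi(m)$, the multiplier $X$ acts as the Clifford operator $m\mapsto C_{f_2(m)}(x)/\vep$. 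This operator is $\vep^{-1}\vect{f_2(m)x}$, viewed in $T_xX$ and acting on $\Cl(T_xX)$.

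\textbf{Step 2: localization.} The resulting module is supported on $\{m: d(f_2(m),x)<\vep\}=U_\vep=\coprod_i U_\vep(y_i)$. The Hilbert module therefore splits as a direct sum over $i$, and each summand is an ideal-type submodule $C_0(U_\vep(y_i))\cdot(\cdots)$. This gives a decomposition of the class as $\sum_i$ (contribution of $y_i$).

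\textbf{Step 3: the local contribution at $y_i$.} On $U_\vep(y_i)$ I will use the following facts:
\begin{itemize}
\item $f_2$ is an isometry onto $U_\vep(x)$;
\item $E$ and $S_M$ can be trivialized, with $E\cong U_\vep(y_i)\times E_{y_i}$.
\end{itemize}
Under $df_2$, the operator $\vect{f_2(m)x}$ is pulled back to $\pm$ the Clifford operator $C^{M}_{y_i}$ of $U_\vep(y_i)$, where $T_xX$ is identified with $T_{y_i}M$ via $df_2$.

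The $E$-factor contributes $\rank(E_{y_i})$ copies. What remains is the class
$$\bra{\Cl(T_xX)\grotimes C_0(U_\vep(y_i),S_M\grotimes\Cl(TM)),\,1,\,C/\vep}.$$
This is the local Bott element of $U_\vep(y_i)$ tensored with the spinor module. Through the identification of $\Cl(T_xX)$ with $\Cl(T_{y_i}M)$ and the $KK$-equivalence $[S_M]$ of Lemma \ref{KK-equivalence C0 Cl}, this class is precisely $[\bm{1}_{\Cl(T_xX)}]\grotimes(\iota_{y_i})_![\bm{1}]$. Indeed, the Gysin map of a point is the extension by zero of the Bott/Thom class of a ball, as in the comparison proposition with $f_!^{cl}$ proved above. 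I will make this step precise by the same homotopy argument as in Proposition \ref{b times delta}: enlarge the domain from $U_\vep$ to a closed ball where $C$ is invertible at the boundary.

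\textbf{Main obstacle: the sign.} The hardest part is the sign ${\rm sign}(f_2,y_i)$. The identification of $\Cl(T_xX)$ with $\Cl(T_{y_i}M)$ via $df_2$ intertwines the gradings. However, the canonical generator of $KK^0(\Cl(T_xX),\Cl(T_xX))$ compared with the spinor module $S_M$ at $y_i$ depends on orientation: the preferred grading is $\sqrt{-1}^n e_1\cdots e_{2n}$ for an oriented basis. If $df_2$ reverses orientation, this grading changes sign, and the resulting module is the opposite of the generator, i.e.\ $-1$ times it.

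I would check this carefully in the even-dimensional case, reducing to $\Cl(\bb{R}^2)^{\grotimes n}$. There I would compare $(\Delta^2)$ with the module graded by $-\sqrt{-1}e_1e_2$, and verify that the latter represents $-[\Delta^2]$. In the odd case I would use the multigrading description and formal Bott periodicity to reduce to the even case.

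Summing over $i$ then yields the formula in the statement.
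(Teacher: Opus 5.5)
Your localization (Steps 1--2) agrees with the paper. Step 3, however, has a genuine gap, and it is exactly where the content of the lemma lies.

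After localizing and trivializing $S_M$ over $U_\vep(y_i)$, the local piece is $C_0(U_\vep(y_i))\grotimes\Cl(T_xX)\grotimes E_{y_i}\grotimes\Cl(T_{y_i}M)$. In this module the left $\Cl(\bb{R}^m)$-action (the multigrading) is Clifford multiplication on the $\Cl(T_{y_i}M)$-factor. The operator $\frac{1}{\vep}C_{f_2(m)}(x)$, by contrast, is Clifford multiplication on the $\Cl(T_xX)$-factor. (Incidentally, the ambient module has no extra $\Cl(TM)$: since $S_M$ is a right Hilbert $\Cl(TM)$-module, the balanced tensor product is $\Cl(T_xX)\grotimes C_0(U_\vep,E\grotimes S_M)$.)

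The class you want, $[\bm{1}_{\Cl(T_xX)}]\grotimes(\iota_{y_i})_![\bm{1}]$, has the roles reversed. There $\Cl(\bb{R}^m)$ acts on $\Cl(T_xX)$ with zero operator, and the Bott operator $z/\vep$ acts on the $\Cl(T_{y_i}M)$-factor, i.e.\ inside $Cl_\tau(M)$. Identifying $T_xX$ with $T_{y_i}M$ via $df_2$ does not move either piece from one tensor factor to the other. $\Cl(T_xX)$ is a separate factor of the target algebra $\Cl(T_xX)\grotimes Cl_\tau(M)$, and $[S_M]$ cannot absorb it, because $[S_M]$ consumes a Clifford algebra in the \emph{domain}, which here is already the multigrading. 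The homotopy from Proposition \ref{b times delta} that you invoke only handles the support issue, not this exchange. So your claim that the local class ``is precisely'' $[\bm{1}_{\Cl(T_xX)}]\grotimes(\iota_{y_i})_![\bm{1}]$ is asserted, not proved.

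The missing idea, used in the paper, is to treat the multigrading and the operator together as one Clifford map $\bb{R}^m\oplus T_{y_i}M\to T_xX\oplus T_{y_i}M$, $(v,z)\mapsto\bigl(-\tfrac{1}{\vep}df_2(z),\,d\phi^{-1}(v)\bigr)$. Compose it with the path $t\mapsto\left(\begin{smallmatrix}\cos t&\sin t\,df_2\\-\sin t\,df_2^{-1}&\cos t\end{smallmatrix}\right)$ in $O(T_xX\oplus T_{y_i}M)$. Orthogonality preserves the anticommutation between the $\Cl(\bb{R}^m)$-action and the operator, so this is a homotopy of Kasparov modules. At $t=\pi/2$ the multigrading becomes $\gamma\circ df_2\circ d\phi^{-1}$ on $\Cl(T_xX)$ and the operator becomes $z/\vep$ on $\Cl(T_{y_i}M)$. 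The module then splits as the external product $(\Cl(T_xX),\gamma\circ df_2\circ d\phi^{-1},0)\grotimes_{\bb{C}}(Cl_\tau(U_\vep(y_i)),1,z/\vep)^{\oplus\rank(E_{y_i})}$.

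The sign then comes for free: the first factor is $\pm[\bm{1}_{\Cl(T_xX)}]$ according to whether $df_2$ preserves orientation, and your reflection/grading observation is the right justification for that. The rotation also absorbs the minus sign in $C_{f_2(m)}(x)=-\log_x(f_2(m))$, which you left as an unresolved ``$\pm$''. So the sign you flag as the main obstacle is really a by-product; the real work is the factor exchange. Alternatively, you could compare both sides after pairing with the Dirac element, since the relevant local group is $\bb{Z}$, but that index computation would also have to be carried out.
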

\begin{proof}
First, we clarify and modify the geometric situation.
By deforming the Riemannian metric on $M$, we may identify $U_\vep(y_i)$ with the $\vep$-ball of $\bb{R}^m$ by an orientation-preserving isometry.
Let $\phi$ denote its coordinate map.
By deforming the Riemannian metric of $X$ around $x\in X$, we may assume that $f_2$ is an isometry on each $U_\vep(y_i)$.
Then, with suitable orientation-preserving coordinates on $M$ around $y_i$ and on $X$ around $x$,
we may assume $f_2$ is given near $y_i$ by one of the following matrices:
\begin{equation}\label{df_2 normalize}
\begin{pmatrix}
1 & 0 & \cdots & 0 \\
0 & 1 & \cdots & 0 \\
\vdots & \vdots & \ddots & \vdots \\
0 & 0 & \cdots & 1
\end{pmatrix}
\text{ or }
\begin{pmatrix}
-1 & 0 & \cdots & 0 \\
0 & 1 & \cdots & 0 \\
\vdots & \vdots & \ddots & \vdots \\
0 & 0 & \cdots & 1
\end{pmatrix}\tag{$*$}
\end{equation}
according as $f_2$ preserves or reverses the orientation at $y_i$, respectively.
We identify $T_{y_i}M$ with $\bb{R}^m$ via the derivative $d\phi$ of the coordinate map $\phi$.

We now describe the relevant Kasparov modules in the above local geometric setting.
$$[\iota_x^*\circ (j^{f_2})_!]
=\bra{\Cl(T_xX)\grotimes Cl_\tau(M),\iota_x^*\circ (j^{f_2})_!,0
}\in KK^0(\ca{A}(M),\Cl(T_xX)\grotimes Cl_\tau(M)),\text{ and}$$
$$[b_+]\grotimes_{\bb{C}}
[E\grotimes S_M]=
\bra{\ca{S}_\vep\grotimes C(M,E\grotimes S_M),\id\grotimes \gamma,\frac{X}{\vep}\grotimes\id}
\in KK^m(\bb{C},\ca{A}(M)),$$
where $\gamma$ is the left action of $\Cl(\bb{R}^m)$ induced by the multigraded $Spin^c$-structure of $S_M$.
Thus, in order to compute their Kasparov product, we first need to identify
$$\bbra{\ca{S}_\vep\grotimes C(M,E\grotimes S_M)}
\grotimes_{\ca{A}(M),\iota_x^*\circ (j^{f_2})_!}
\bbra{\Cl(T_xX)\grotimes Cl_\tau(M)}.$$
We prove that it is isomorphic to $\Cl(T_xX)\grotimes C_0(U_\vep,E\grotimes S_M)$.
By 
$$C(M,E\grotimes S_M)\cong C(M,E\grotimes S_M)\grotimes_{Cl_\tau(M)} Cl_\tau(M),$$
we have a homomorphism
\begin{align*}
&\bbra{\ca{S}_\vep\grotimes C(M,E\grotimes S_M)}
\grotimes_{\ca{A}(M),\iota_x^*\circ (j^{f_2})_!}
\bbra{\Cl(T_xX)\grotimes Cl_\tau(M)}\\ 
&\ \ \ \xrightarrow{\cong}
C(M,E\grotimes S_M)\grotimes_{Cl_\tau(M)}
\ca{A}(M)
\grotimes_{\ca{A}(M),\iota_x^*\circ (j^{f_2})_!}
\bbra{\Cl(T_xX)\grotimes Cl_\tau(M)} \\
&\ \ \ \ \ \ \to 
C(M,E\grotimes S_M)\grotimes_{Cl_\tau(M)}
\bbra{\Cl(T_xX)\grotimes Cl_\tau(M)} 
\xrightarrow{\cong}
 \Cl(T_xX)\grotimes C(M,E\grotimes S_M).
\end{align*}
More explicitly, the homomorphism is given as follows.
For $g\in \ca{S}_\vep$, $s\in C(M,E\grotimes S_M)$, $a\in \Cl(T_xX)$, and $\phi\in Cl_\tau(M)$, 
\begin{align*}
&(g\grotimes s)\grotimes(a\grotimes \phi)
\mapsto 
(-1)^{|g||s|}s\grotimes(g\grotimes 1)\grotimes (a\grotimes \phi)
\mapsto
(-1)^{|g||s|}s\grotimes\iota_x^*\circ (j^{f_2})_!(g\grotimes 1)(a\grotimes \phi)\\
&\ \ \ = 
[M\ni m\mapsto 
(-1)^{|g||s|+(|g|+|a|)|s|}g(C_{f_2(m)}(x))a\grotimes s(m)\phi(m)]
\\
&\ \ \ =
[M\ni m\mapsto 
(-1)^{|a||s|}g(C_{f_2(m)}(x))a\grotimes s(m)\phi(m)].
\end{align*}
Thanks to the factor $g(C_{f_2(m)}(x))$, it vanishes if $d(f_2(m),x)\geq \vep$, and hence it is supported on $U_\vep=\cup_{y\in f_2^{-1}(x)}U_\vep(y)$. Thus, we have
$$\bbra{\ca{S}_\vep\grotimes C(M,E\grotimes S_M)}
\grotimes_{\ca{A}(M),\iota_x^*\circ (j^{f_2})_!}
\bbra{\Cl(T_xX)\grotimes Cl_\tau(M)}
\cong \Cl(T_xX)\grotimes C_0(U_\vep,E\grotimes S_M).$$

We next describe this Kasparov module on each component $U_\varepsilon(y_i)$.
Since each $U_\vep(y_i)$ is contractible, we may choose a trivialization of $S_M$ over $U_\vep(y_i)$ and identify it with the standard one: $S_M|_{U_\vep(y_i)}=U_\vep(y_i)\times \Cl(T_{y_i}M)$.
Then the fiber is isomorphic to
$\Cl(T_xX)\grotimes E_{y_i}\grotimes \Cl(T_{y_i}M)$.
In this description, the multigrading structure is given by $d\phi^{-1}:\bb{R}^m\to T_{y_i}M$.

Under this isomorphism, the operator 
$\bra{\frac{X}{\vep}\grotimes \id_{E\grotimes S_M}}\grotimes \id$ is given by multiplication by the function
$U_\vep\ni z\mapsto F_z:=\frac{1}{\vep}C_{f_2(m)}(x)\grotimes \id_{E_{y_i}}\grotimes \id_{S_M}$.
Therefore, 
$[b_+]\grotimes_{\bb{C}}
[E\grotimes S_M]$ is represented by
\begin{equation}\label{[b_+]grotimes_{bb{C}}
[Egrotimes S_M]sonomono}
\bigoplus_i\bra{C_0(U_\vep(y_i))\grotimes \Cl(T_xX)\grotimes E\grotimes \Cl(T_{y_i}M),\id_{\Cl(T_xX)}\grotimes \id_{E_{y_i}}\grotimes (\gamma\circ d\phi^{-1}),\bbra{F_z}_{z\in U_\vep(y_i)}}.\tag{$**$} 
\end{equation}

Note that $\Cl(\bb{R}^m)$, which gives the multigrading structure, acts on $\Cl(T_{y_i}M)$, and this action is denoted by $\gamma\circ d\phi^{-1}$.
On the other hand, $F_z$ is defined using the $\Cl(T_xX)$-factor.
These two operators can be represented by a single Clifford multiplication
$${\small\xymatrix{
\widetilde{\gamma}:\bb{R}^m\oplus T_{y_i}M \ar[r]
\ar@{}[d]|{\rotatebox{-90}{$\in$}} &
T_xX\oplus T_{y_i}M \ar[r]
\ar@{}[d]|{\rotatebox{-90}{$\in$}} &
\Cl(T_xX\oplus T_{y_i}M) \ar^{\cong}[r]
\ar@{}[d]|{\rotatebox{-90}{$\in$}} &
 \Cl(T_xX)\grotimes \Cl( T_{y_i}M)
 \\
(v,z) \ar@{|->}[r]&
\bra{-\frac{df_2(m)}{\vep}, d\phi^{-1}(v)} \ar@{|->}^{=}[r]&
\bra{-\frac{df_2(m)}{\vep}, d\phi^{-1}(v)}.
}}$$
More precisely, $(\gamma\circ d\phi^{-1})(v)\grotimes \id=\widetilde{\gamma}(v,0)$, and $F_z=\id\grotimes \widetilde{\gamma}(0,\log_{y_i}(z))$ as operators on $\Cl(T_xX\oplus T_{y_i}M)$. 
We would like to ``rotate'' the action in order to split the $\Cl(T_xX)$-factor from the $Cl_\tau(U_\vep(y_i))$-factor.
Consider the continuous path in the orthogonal group $O(T_xX\oplus T_{y_i}M)$ given by 
$$t\mapsto 
\begin{pmatrix}
(\cos t)\id_{T_xX} & (\sin t) df_2 \\
-(\sin t) df_2^{-1} & (\cos t)\id_{T_{y_i}M}
\end{pmatrix}
\in O(T_xX\oplus T_{y_i}M).$$
By the above homotopy, the map 
$(v,z) \mapsto
\bra{-\frac{df_2(m)}{\vep}, d\phi^{-1}(v)}$
is homotopic to the map obtained at $t=\frac{\pi}{2}$, namely,
$$\bb{R}^m\oplus T_{y_i}M
\ni
(v,z)
\mapsto
\bra{df_2\circ d\phi^{-1}(v), \frac{z}{\vep}}
\in T_xX\oplus T_{y_i}M.$$
Composing with $\widetilde{\gamma}$, 
the pair consisting of the $\Cl(\bb{R}^m)$-action 
$\id\grotimes (\gamma\circ d\phi^{-1})$ and 
the operator $F_z\grotimes \id$, both on 
$\Cl(T_xX) \grotimes \Cl(T_{y_i}M)$, is homotopic 
to the pair consisting of $(\gamma\circ df_2\circ d\phi^{-1})\grotimes \id$ and
$\id\grotimes \frac{z}{\vep}$.

Therefore, the Kasparov module (\ref{[b_+]grotimes_{bb{C}}
[Egrotimes S_M]sonomono}) is homotopic to 
\begin{align*}
&\bigoplus_i\bra{
\Cl(T_xX)\grotimes C_0(U_\vep(y_i))\grotimes  
E_{y_i}\grotimes \Cl(T_{y_i}M),
\gamma\circ df_2\circ d\phi^{-1}\grotimes
\id,
\bbra{\id\grotimes \frac{z}{\vep}}_{z\in U_\vep(y_i)}}\\
&\ \ \ 
\cong
\bra{\Cl(T_xX),\gamma\circ df_2\circ d\phi^{-1},0}
\grotimes_{\bb{C}}
\bigoplus_i\bra{
C_0(U_\vep(y_i))\grotimes  
E_{y_i}\grotimes \Cl(T_{y_i}M),
\id,
\bbra{\id\grotimes \frac{z}{\vep}}_{z\in U_\vep(y_i)}}\\
&\ \ \ \ \ \ 
\cong
\bra{\Cl(T_xX),\gamma\circ df_2\circ d\phi^{-1},0}
\grotimes_{\bb{C}}
\bigoplus_i\bra{
C_0(U_\vep(y_i))\grotimes \Cl(T_{y_i}M),
\id,
\bbra{\frac{z}{\vep}}_{z\in U_\vep(y_i)}}^{\oplus\rank(E_{y_i})}.
\end{align*}

By (\ref{df_2 normalize}), 
$\bra{\Cl(T_xX),\gamma\circ df_2\circ d\phi^{-1},0}$
is $[\bm{1}_{\Cl(T_xX)}]$ if $df_2$ is orientation-preserving at $y_i$, and $-[\bm{1}_{\Cl(T_xX)}]$ if $df_2$ is orientation-reversing at $y_i$.
In both cases, we have 
$$\bra{\Cl(T_xX),\gamma\circ df_2\circ d\phi^{-1},0}={\rm sign}(f,y_i)[\bm{1}_{\Cl(T_xX)}].$$
Moreover, $\bra{
C_0(U_\vep(y_i))\grotimes \Cl(T_{y_i}M),
\id,\bbra{\frac{z}{\vep}}_{z\in U_\vep(y_i)}}$ is the image of $[\bm{1}]\in KK(\bb{C},\bb{C})=K^0(\{y_i\})$ under the Gysin map $(\iota_{y_i})_!$.
\end{proof}

Since $\bra{
C_0(U_\vep(y_i))\grotimes \Cl(T_{y_i}M),
\id,\bbra{\frac{z}{\vep}}_{z\in U_\vep(y_i)}}
\cong \bra{Cl_\tau(U_\vep(y_i)),
\id,\bbra{\frac{z}{\vep}}_{z\in U_\vep(y_i)}}$, and 
\\
$\bra{C_0(U_\vep(y_i),\Cl(TM)),
1,\bbra{\frac{z}{\vep}}_{z\in U_\vep(y_i)}}\grotimes_{Cl_\tau(M)} [d_M]=1$ as proved in \cite[Theorem 4.8]{Kas88}, the above computation gives the following result.

\begin{pro}
In the above situation, if $M$ is connected,
$$\Phi(M,E,f)\grotimes [\iota_x^*]\grotimes([d_U]\grotimes[\bm{1}_{\Cl(T_xX)}])\grotimes[\ev_0]$$
is equal to the mapping degree of $f_2$ times the rank of $E$.
\end{pro}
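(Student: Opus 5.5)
The plan is to assemble the chain of equalities already established before the statement.

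\emph{Step 1: reduce to an index computation on $M$.} By the associativity manipulation preceding the first lemma of this subsection, together with that lemma, the pairing $\Phi(M,E,f)\grotimes [\iota_x^*]\grotimes([d_U]\grotimes[\bm{1}_{\Cl(T_xX)}])$ equals
$$[b_+]\grotimes_{\bb{C}}\Bigl(\bigl([b_+]\grotimes_{\bb{C}}[E\grotimes S_M]\bigr)\grotimes_{\ca{A}(M)}[\iota_x^*\circ(j^{f_2})_!]\Bigr)\grotimes\bigl([\bm{1}_{\Cl(T_xX)}]\grotimes_{\bb{C}}[d_M]\bigr).$$
This reduction uses Proposition \ref{b times delta} and $[\beta_0]\grotimes[d_U]=[\bm{1}_{\ca{S}_\vep}]$ from \cite{T1}.

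\emph{Step 2: substitute the local computation.} By the preceding lemma, the middle factor is $\sum_i {\rm sign}(f_2,y_i)\,\rank(E_{y_i})\,[\bm{1}_{\Cl(T_xX)}]\grotimes(\iota_{y_i})_![\bm{1}]$. Since $M$ is connected, $\rank(E_{y_i})=\rank(E)$ for all $i$.

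\emph{Step 3: evaluate each summand against the Dirac element.} Each summand pairs with $[\bm{1}_{\Cl(T_xX)}]\grotimes[d_M]$ through the $Cl_\tau(M)$-factor only. First identify $(\iota_{y_i})_![\bm{1}]$ with the local Bott class $(Cl_\tau(U_\vep(y_i)),1,z/\vep)$, extended by zero to $Cl_\tau(M)$. Kasparov's result \cite[Theorem 4.8]{Kas88} then gives pairing $1$ with $[d_M]$. This leaves
$$[b_+]\grotimes_{\bb{C}}\Bigl(\sum_i{\rm sign}(f_2,y_i)\Bigr)\rank(E)\,[\bm{1}_{\Cl(T_xX)}].$$

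\emph{Step 4: finish.} Taking the product with $[\ev_0]$ uses $[b_+]\grotimes[\ev_0]=1$. The sum $\sum_i{\rm sign}(f_2,y_i)$ is the mapping degree of $f_2$ at the regular value $x$. Under $KK^m(\bb{C},\Cl(T_xX))\cong\bb{Z}$ with generator $[\bm{1}_{\Cl(T_xX)}]$, the result is therefore $\deg(f_2)\cdot\rank(E)$.

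\emph{Main obstacle: independence of the preparatory deformations.} The lemma assumed several modifications: a homotopy of $f$ making $x$ a regular value, and changes of the metrics on $M$ and near $x$ on $X$ so that $f_2$ is a local isometry. Each side of the claim must be checked to be invariant under these.
\begin{itemize}
\item \emph{Homotopy of $f$.} $\Phi$ is invariant by homotopy invariance of $[(i^f)_!]$, and the degree is invariant as well.
\item \emph{Metric changes.} Two metrics on $M$ are linked by a path, giving an isomorphism $Cl_\tau$ (via the polar part of the identity on $TM$) that intertwines the Dirac classes and $[S_M]$ up to homotopy. For $X$, the same kind of path changes $\ca{A}(U\times X)$ and $\iota_x^*$ only through a continuous family, so the class in $KK(\bb{C},\Cl(T_xX))\cong\bb{Z}$ is unchanged.
\end{itemize}
Before finalizing, I would also check that the sign conventions for orientations, namely the preferred grading and orientation of $df_2$, match those fixing the degree.
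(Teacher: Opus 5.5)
Your proposal matches the paper's argument: the same associativity chain (using $[b_+]\grotimes[\Delta]=[b_+]\grotimes_{\bb{C}}[b_+]$ and $[\beta_0]\grotimes[d_U]=[\bm{1}_{\ca{S}_\vep}]$), then the local lemma producing $\sum_i{\rm sign}(f_2,y_i)\,\rank(E)\,[\bm{1}_{\Cl(T_xX)}]\grotimes(\iota_{y_i})_![\bm{1}]$, then Kasparov's Theorem 4.8 to pair each local Bott class with $[d_M]$, and finally $[b_+]\grotimes[\ev_0]=1$. Your discussion of invariance under the preliminary homotopy of $f$ and the metric deformations goes beyond the paper, which simply says ``we may assume'' these normalizations, so it is a supplement rather than a different route.
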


\begin{rmks}
$(1)$ If $\dim(M)<\dim(X)$, we can deform $f$ so that $f_2(M)$ and the $\vep$-neighborhood of $x$ are disjoint, and hence $\Phi(M,E,f)\grotimes [\iota_x^*]\grotimes([d_U]\grotimes[\bm{1}_{\Cl(T_xX)}])\grotimes[\ev_0]=0$.
It may be possible to prove $\Phi(M,E,f)\neq 0$ by replacing $[d_U]\grotimes[\bm{1}_{\Cl(T_xX)}]$ with $[d_U]\grotimes[\iota^*]\grotimes[d_{X'}]$ for some suitably chosen submanifold $X'$ with inclusion $\iota:X'\hookrightarrow X$.
Strictly speaking, $\iota^*$ gives a map $Cl_\tau(X)\to C(X',\Cl(TX|_{X'}))
=C(X',\Cl(TX')\grotimes \Cl(\nu))$, where $\nu$ is the normal bundle.
Thus, one needs a $Spin^c$-structure on $\nu$ to define $[d_{X'}]$.
In this setting, if $\dim(X')=\dim(X)-\dim(M)$, the computation proceeds almost in parallel with the one above.
If $\dim(X')<\dim(X)-\dim(M)$, the Kasparov product must be zero for the same reason. If $\dim(X')>\dim(X)-\dim(M)$, 
the Kasparov product can be computed in a way parallel to that in the following paragraph.

$(2)$ If $\dim(M)>\dim(X)$, one needs to deal with a more complicated inverse image $f_2^{-1}(x)$, which is a submanifold when $x$ is a regular value. 
In this case, $E|_{f_2^{-1}(x)}$ can be non-trivial, and the computation of the Kasparov product will require the arguments involving the characteristic classes of $E$.
\end{rmks}

\section*{Acknowledgements}
I am supported by JSPS KAKENHI Grant Number 23K12970.
During the preparation of this manuscript, I used ChatGPT (OpenAI) for English-language polishing and readability improvements. I reviewed and edited the resulting text and takes full responsibility for the manuscript.

Doman Takata, 
Faculty of Education Mathematical and Natural Sciences,
Niigata University, 
8050 Ikarashi 2-no-cho, Nishi-ku, Niigata, 950-2181, Japan. 

E-mail address: {\tt d.takata@ed.niigata-u.ac.jp}

\end{document}